\documentclass[11pt]{amsart}

\usepackage{amsmath, amssymb, amsthm, amscd}
\usepackage{float}
\usepackage{graphicx, epstopdf}
\usepackage{fullpage}
\usepackage{todonotes}

\setlength{\parindent}{0cm}

\newtheorem{definition}{Definition}[section]
\newtheorem{theorem}[definition]{Theorem}

\newtheorem{lemma}[definition]{Lemma}
\newtheorem{corollary}[definition]{Corollary}
\newtheorem{proposition}[definition]{Proposition}
\newtheorem{remark}[definition]{Remark}

\newcommand{\Q}{\mathbf{Q}}
\newcommand{\Z}{\mathbf{Z}}
\newcommand{\R}{\mathbf{R}}
\newcommand{\C}{\mathbf{C}}
\newcommand{\bs}{\backslash}

\renewcommand{\Im}{\textrm{Im}}

\DeclareMathOperator{\SL}{SL}
\DeclareMathOperator{\GL}{GL}
\DeclareMathOperator{\Sp}{Sp}
\DeclareMathOperator{\rk}{rk}
\DeclareMathOperator{\tr}{tr}
\DeclareMathOperator{\GSp}{GSp}

\DeclareMathOperator{\Gr}{Gr}
\DeclareMathOperator{\Stab}{Stab}

\DeclareMathOperator{\vol}{vol}

\title{Hecke eigenvalues of Klingen--Eisenstein series of squarefree level}
\author{Martin J. Dickson}
\date{\today}

\begin{document}
\maketitle

\begin{abstract}We compute the intertwining relation between the Hecke operators and the Siegel lowering operators on Siegel modular forms of arbitrary level $N$ and character $\chi$ by using formulas for the action of the Hecke operators on Fourier expansions.  Using an explicit description of the Satake compactification of $\Gamma_0^{(n)}(N) \backslash \mathfrak{H}_n$ when $N$ is squarefree we extend this to give intertwining relations for each cusp.  As an application we give formulas for the action of Hecke operators on the space of Klingen--Eisenstein series of squarefree level $N$, for primes $p \nmid N$.\end{abstract}

\section{Introduction}

Let $\mathcal{M}_k^{(n)}(N, \chi)$ denote the space of Siegel modular forms of degree $n$, weight $k$, level $N$, and character $\chi$ modulo $N$.  Since the theta series encoding the number of times a fixed quadratic form in $2k$ variables represents a quadratic form in $n$ variables defines an element of such a space, it is natural from the viewpoint of the arithmetic of quadratic forms to want to understand $\mathcal{M}_k^{(n)}(N, \chi)$ as explicitly as possible.  The first thing one should exploit in this endeavour is the fact that the vector space $\mathcal{M}_k^{(n)}(N, \chi)$ can be decomposed as the direct sum of the space $\mathcal{S}_k^{(n)}(N, \chi)$ of cusp forms and the complementary space of Eisenstein series.  The space of Eisenstein series itself decomposes further, as a direct sum of subspaces indexed by integers $0 \leq r < n$, where each subspace consists of the Klingen--Eisenstein series formed from Siegel cusp forms of degree $r$.  The basic philosophy is that one should understand an Eisenstein series of degree $n$ just as well as one understands the cusp form of degree $r$ it was lifted from.  In this paper we will describe a method for making this practicable, when $N$ is square-free.\\

We focus our attention on the action of Hecke operators on Klingen--Eisenstein series, although we hope that parts of the set-up we describe will be useful in examining other features of $\mathcal{M}_k^{(n)}(N, \chi)$.  In order to understand the structure of the Eisenstein part of $\mathcal{M}_k^{(n)}(N, \chi)$ as a Hecke module, the first step is to derive a relation between the action of Hecke operators on modular forms of degree $n$ and modular forms of degree $n-1$.  For $F \in \mathcal{M}_k^{(n)}(1) := \mathcal{M}_k(1, \mathbf{1})$ it is not difficult to show\begin{footnote}{See for example \cite{Freitag1983} Satz IV.4.4, but beware the differences in normalisation.  Our normalisation of the Hecke operators is introduced in \eqref{eqn:hecke-cosets} and \eqref{eqn:hecke-op-normalization}.}\end{footnote} that 
\[\Phi(F | T^{(n)}(p)) = (1+p^{k-n})\Phi(F) | T^{(n-1)}(p).\]
Slightly more complicated, but still completely explicit, relations were found for the remaining Hecke operators $T_j(p^2)$ acting on $\mathcal{M}_k^{(n)}(1)$ in \cite{Krieg1986}.  As noted in \cite{Krieg1986} there is also a version of intertwining relationship due to \v{Z}arkovskaja (\cite{Zarkovskaja1974}) which holds in more generality; however this not explicit enough for our purposes.  The first main result of this paper is a completely explicit forms of the intertwining relations for arbitrary level and character\begin{footnote}{Which satisfies the natural condition explained in Remark \ref{rmk:character-parity}.}\end{footnote}:

\begin{theorem}\label{thm:first-relation}  Let $n$, $k$, and $N$ be positive integers, let $\chi$ be a character modulo $N$ such that $\chi(-1) = (-1)^{k}$, let $F \in \mathcal{M}_k^{(n)}(N, \chi)$, and let $p$ be any prime.  Then
\[\begin{aligned} \Phi(F|T^{(n)}(p, \chi)) &= c^{(n-1)}(\chi) \Phi(F)|T^{(n-1)}(p, \chi), \\
\Phi(F | T_j^{(n)}(p^2, \chi)) &= c_{j, j}^{(n-1)}(\chi)\Phi(F) | T^{(n-1)}_j(p^2, \chi) \\
&\qquad+ c_{j, j-1}^{(n-1)}(\chi)\Phi(F) | T^{(n-1)}_{j-1}(p^2, \chi) \\
&\qquad+ c_{j, j-2}^{(n-1)}(\chi)\Phi(F)| T^{(n-1)}_{j-2}(p^2, \chi) \end{aligned}\]
where
\[\begin{aligned}
c^{(n-1)}(\chi) &= (1 + \chi(p) p^{k-n}), \\
c_{j, j}^{(n-1)}(\chi) &= \chi(p)p^{j+k-2n}, \\
c_{j, j-1}^{(n-1)}(\chi) &= \chi(p^2)p^{2k-2n} + \chi(p)(p^{j+k-2n} - p^{j+k-2n-1}) + 1, \\
c_{j, j-2}^{(n-1)}(\chi) &=  \chi(p)(p^{k-j+1} - p^{k+j-2n-1}),\end{aligned}\]
with the understanding that $T_j^{(n-1)}(p, \chi)$ is the zero operator for $j \in \{-2, -1, n\}$.
  
\end{theorem}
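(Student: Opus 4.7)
The plan is to prove both intertwining relations by a direct computation on Fourier expansions. The paper will first record (in a preliminary section) explicit formulas expressing the Fourier coefficients of $F | T^{(n)}(p, \chi)$ and $F | T_j^{(n)}(p^2, \chi)$ as weighted sums over the Fourier coefficients $a(T)$ of $F$, extending the classical formulas of Andrianov-type to arbitrary level $N$ and character $\chi$. The weights are powers of $p$, values of $\chi(p)$, and combinatorial factors indexed by the coset decomposition of the relevant double coset. Since the Fourier coefficients of $\Phi(F)$ are exactly the coefficients $a(T)$ of $F$ at matrices of block form $\begin{pmatrix} T' & 0 \\ 0 & 0 \end{pmatrix}$, the theorem reduces to specialising these formulas to indices of this form and recognising the result as the Fourier expansion of a linear combination of the $\Phi(F) | T_r^{(n-1)}(\cdot, \chi)$ appearing on the right-hand side.

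For $T^{(n)}(p, \chi)$ the cosets may be represented by matrices of Smith normal form $\diag(1, \ldots, 1, p)$. When $T = \begin{pmatrix} T' & 0 \\ 0 & 0 \end{pmatrix}$, the action of such a coset on $T$ preserves this block form precisely when the ``$p$'' sits away from the last coordinate; splitting the sum into these two alternatives produces the $1$ and $\chi(p) p^{k-n}$ summands of $c^{(n-1)}(\chi)$. In each piece a comparison of indices and weights identifies the resulting sum as a constant times the Fourier expansion of $\Phi(F) | T^{(n-1)}(p, \chi)$.

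For $T_j^{(n)}(p^2, \chi)$ the argument is structurally the same but more elaborate. The cosets now have Smith normal forms involving both $p$ and $p^2$ as elementary divisors, and the sum splits according to the rank $r \in \{0, 1, 2\}$ of the interaction of the higher elementary divisors with the last coordinate. For each such $r$ the resulting partial sum rewrites, after a re-indexing, as the Fourier expansion of $\Phi(F) | T_{j-r}^{(n-1)}(p^2, \chi)$ scaled by an explicit polynomial in $p$ and $\chi(p)$. Reading these polynomials off yields the coefficients $c_{j, j-r}^{(n-1)}(\chi)$, and the convention that $T_j^{(n-1)}$ vanishes for $j \in \{-2, -1, n\}$ simply records the fact that the corresponding values of $r$ cannot occur for extreme $j$.

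The principal obstacle is the combinatorial bookkeeping in the $T_j(p^2)$ case: one must enumerate, for each elementary divisor type and each $r$, the cosets whose action on a block-diagonal $T$ returns a block-diagonal matrix, and weight these counts by the correct factors of $\chi(p)$ and $p$ prescribed by the Hecke operator. Verifying that the resulting polynomials agree with $c_{j, j-r}^{(n-1)}(\chi)$ uniformly in $j$, including the boundary-case vanishing, is where indexing errors are most likely; once the coset parametrisation is set up cleanly, however, the calculation itself is mechanical.
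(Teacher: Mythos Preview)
Your overall strategy---compare Fourier expansions, and split according to how the last coordinate interacts with the elementary divisors---is the same as the paper's. For $T^{(n)}(p,\chi)$ your sketch matches the paper closely (the paper phrases the two cases in terms of lattices $\Omega$ between $p\Lambda$ and $\Lambda$, but the content is identical).

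For $T_j^{(n)}(p^2,\chi)$, however, your description oversimplifies the argument and omits the paper's main technical device. Two points:

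\begin{itemize}
\item The paper does \emph{not} work directly with $T_j^{(n)}(p^2,\chi)$. Instead it passes to the averaged operators $\widetilde{T}_j^{(n)}(p^2,\chi) = p^{(n-j)(n-k+1)}\overline{\chi}(p^{n-j})\sum_{t\le j}\binom{n-t}{j-t}_p T_t^{(n)}(p^2,\chi)$, for which the Hafner--Walling formula gives the Fourier coefficients as a sum over lattices $p\Lambda\subset\Omega\subset p^{-1}\Lambda$ weighted by counts $\alpha_j(\Omega,\Lambda)$ of totally isotropic subspaces of fixed codimension. This is what makes the bookkeeping tractable. The intertwining relation is first established for $\widetilde{T}_j$ (Proposition~3.4), and only then transferred back to $T_j$ by inverting the unitriangular change of basis and comparing coefficients.

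\item Your trichotomy $r\in\{0,1,2\}$ is too coarse. In the paper's lattice language, the $\Omega$ projecting onto a fixed $\Omega'$ fall into \emph{four} types (Lemma~3.3), and type~(B) contributes to two of the three output operators because the isotropic-subspace count $\alpha_j^{(n)}$ on $\Lambda_1'/p\Lambda_1'\oplus(\Z/p\Z)\overline{x_n}$ splits as $p^{r-l-n+j+1}\alpha_j^{(n-1)}+\alpha_{j-1}^{(n-1)}$. So the map from case types to output indices is not a bijection, and this splitting is where the coefficient $\widetilde{c}_{j,j-1}^{(n-1)}$ actually comes from.
\end{itemize}

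A direct attack on $T_j$ along the lines you sketch may be possible in principle, but the paper's route through $\widetilde{T}_j$ is what keeps the combinatorics manageable; without it the ``mechanical'' step you anticipate would be substantially harder than your sketch suggests.
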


When $N=1$ this reduces to the main result of \cite{Krieg1986} (after accounting for the differences in normalisation).  However, our method of proof, which also applies for bad primes, is quite different, and is based on the action of Hecke operators on Fourier expansions.  The same style of argument works in all cases: it is straightforward for $T^{(n)}(p, \chi)$ but far more involved for $T_j^{(n)}(p^2, \chi)$.  We therefore provide full details in the latter case in \S\ref{sctn:first-relations} and some indications of how one can argue similarly for the former in \S\ref{sctn:easy-relations}.  From the definitions in \S\ref{sctn:preliminaries} we see that there is nothing to prove for $j=0$; we will deduce the relations for $T_j^{(n)}(p^2, \chi)$ when $j>0$ from analogous relations for a set of averaged operators $\widetilde{T}_j^{(n)}(p^2, \chi)$.\\

Of course Theorem \ref{thm:first-relation} only refers to the output at a single cusp, but we should really be examining the behaviour of $F$ at all $(n-1)$-cusps simultaneously.  It is therefore necessary to consider the question of intertwining between the action of Hecke operators and restrictions to \textit{other} cusps.  In this consideration we restrict to the case when $N$ is squarefree.  We begin by providing a description of the Satake compactification $\Gamma_0^{(n)}(N) \backslash \mathfrak{H}_n^*$ of $\Gamma_0^{(n)}(N) \backslash \mathfrak{H}_n$ when $N$ is squarefree.  The compactification is obtained by adding quotients of $\mathfrak{H}_r$ to the boundary.  We describe these in detail, and how they intersect each other in lower dimensional components; see Theorem \ref{thm:description-of-boundary} for a precise statement.  Theorem \ref{thm:description-of-boundary} gives more information than is strictly necessary for our applications, but the extra information is easily obtained and perhaps of independent interest.  Using this description, we may parameterise the $r$-cusps of $\Gamma_0^{(n)}(N) \backslash \mathfrak{H}_n^*$ with sequence $(l_{n-r},...,l_1)$ of divisors of $N$ which are pairwise coprime.\begin{footnote}{The outr\'{e} labelling of the indices is explained by the discussion in \S\ref{sctn:satake-compactification-description}.}\end{footnote}  Given such a sequence, we define $l_0 = N/l_{n-r}\hdots l_1$.  In particular, an $(n-1)$-cusp corresponds to a divisor $l_1$ of $N$; we write $\Phi_{l_1}$ for the map restricting to that cusp\begin{footnote}{This depends on a choice of coset representative (c.f. \eqref{eqn:lowering-dependency-on-rep}), see \S\ref{sctn:intertwining-relations-any-cusp} for our precise definition.}\end{footnote}; with our definitions, $\Phi_1$ will be the usual lowering operator $\Phi$.  Since we have restricted to $N$ squarefree, we can represent cusps by Atkin--Lehner style operators; thus, using an argument similar to one used in \cite{Asai1976} for modular forms of degree $1$, we obtain relations which differ to those of Theorem \ref{thm:first-relation} only in the characters:

\begin{theorem}\label{thm:any-cusp-relation}  Let $n$ and $k$ be positive integers, let $N$ be a squarefree positive integer, let $\chi$ be a character modulo $N$ such that $\chi(-1) = (-1)^{k}$, let $p \nmid N$ be prime, and let $F \in \mathcal{M}_k^{(n)}(N, \chi)$.  Then
\[\begin{aligned} \Phi_{l_1}(F|T^{(n)}(p, \chi)) &= \chi_{l_1}(p^n)c^{(n-1)}(\overline{\chi}_{l_1} \chi_{l_0}) \Phi_{l_1}(F)|T^{(n-1)}(p, \overline{\chi}_{l_1}\chi_{l_0}), \\
\Phi_{l_1}(F | T_j^{(n)}(p^2, \chi)) &= \chi_{l_1}(p^{2n}) \left[c_{j, j}^{(n-1)}(\overline{\chi}_{l_1}\chi_{l_0})\Phi_{l_1}(F) | T^{(n-1)}_j(p^2, \overline{\chi}_{l_1}\chi_{l_0}) \right.\\
&\qquad\left.+ c_{j, j-1}^{(n-1)}(\overline{\chi}_{l_1}\chi_{l_0})\Phi_{l_1}(F) | T^{(n-1)}_{j-1}(p^2, \overline{\chi}_{l_1}\chi_{l_0}) \right.\\
&\qquad\left.+ c_{j, j-2}^{(n-1)}(\overline{\chi}_{l_1}\chi_{l_0})\Phi_{l_1}(F)| T^{(n-1)}_{j-2}(p^2, \overline{\chi}_{l_1}\chi_{l_0})\right], \end{aligned}\]
where $c^{(n-1)}$, $c^{(n-1)}_{j, j}$, $c^{(n-1)}_{j, j-1}$ and $c^{(n-1)}_{j, j-2}$ are as in Theorem \ref{thm:first-relation}, and we adopt the same convention that $T_j^{(n-1)}$ is the zero operator for $j \in\{n, -1, -2\}$.\end{theorem}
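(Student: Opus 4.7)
The natural approach is to reduce Theorem \ref{thm:any-cusp-relation} to Theorem \ref{thm:first-relation} via an Atkin--Lehner type conjugation, following the strategy of \cite{Asai1976} in degree $1$. Since $N$ is squarefree, each divisor $l_1 \mid N$ determines an element $\gamma_{l_1} \in \GSp_{2n}(\Q)^+$ of similitude $l_1$ which normalises $\Gamma_0^{(n)}(N)$, and the description of the Satake compactification in Theorem \ref{thm:description-of-boundary} allows us to choose $\gamma_{l_1}$ so that
\[
\Phi_{l_1}(F) = \Phi\bigl(F|_k\gamma_{l_1}\bigr).
\]
A direct check on the lower block-diagonal entries of conjugates $\gamma_{l_1}^{-1} \gamma \gamma_{l_1}$ for $\gamma \in \Gamma_0^{(n)}(N)$ then shows $F|_k\gamma_{l_1} \in \mathcal{M}_k^{(n)}(N, \overline{\chi}_{l_1}\chi_{l_0})$.

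The core of the argument is to establish twisted commutation formulas between $|_k\gamma_{l_1}$ and the Hecke operators at primes $p \nmid N$:
\[
\bigl(F | T^{(n)}(p, \chi)\bigr)|_k\gamma_{l_1} = \chi_{l_1}(p^n) \bigl(F|_k\gamma_{l_1}\bigr)\bigl|T^{(n)}\bigl(p, \overline{\chi}_{l_1}\chi_{l_0}\bigr),
\]
\[
\bigl(F | T_j^{(n)}(p^2, \chi)\bigr)|_k\gamma_{l_1} = \chi_{l_1}(p^{2n}) \bigl(F|_k\gamma_{l_1}\bigr)\bigl|T_j^{(n)}\bigl(p^2, \overline{\chi}_{l_1}\chi_{l_0}\bigr).
\]
Because $\gcd(p, l_1)=1$, conjugation by $\gamma_{l_1}$ permutes the set of single coset representatives making up each Hecke double coset. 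The character twist $\chi \mapsto \overline{\chi}_{l_1}\chi_{l_0}$ compensates exactly for the change in character value on the new representatives, and the scalar prefactor arises as $\chi_{l_1}$ evaluated on the determinant of the underlying Hecke matrix -- namely $p^n$ for $T^{(n)}(p)$ and $p^{2n}$ for $T_j^{(n)}(p^2)$.

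Once the commutation formulas are in hand, the proof concludes by applying $\Phi$ to both sides and invoking Theorem \ref{thm:first-relation} for $F|_k\gamma_{l_1} \in \mathcal{M}_k^{(n)}(N, \overline{\chi}_{l_1}\chi_{l_0})$. Using $\Phi(F|_k\gamma_{l_1}) = \Phi_{l_1}(F)$, the resulting identity is precisely the claim of the theorem.

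The main obstacle is the verification of the twisted commutation formula for $T_j^{(n)}(p^2, \chi)$: the explicit coset representatives are considerably more intricate than for $T^{(n)}(p, \chi)$, and one must reindex them under conjugation while checking that the character transformation holds uniformly across all representatives. As in the proof of Theorem \ref{thm:first-relation}, it will likely be most convenient to first establish the commutation for the averaged operators $\widetilde{T}_j^{(n)}(p^2, \chi)$ and then deduce the claim for $T_j^{(n)}(p^2, \chi)$ with $j > 0$ by the same rearrangement already used there.
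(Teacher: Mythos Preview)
Your proposal is correct and matches the paper's approach exactly: the paper defines $\Phi_{l_1}(F) = \Phi(F|_k\kappa(l_1))$ for an Atkin--Lehner style element $\kappa(l_1)$ normalising $\Gamma_0^{(n)}(N)$, proves precisely your twisted commutation formulas (Lemma~\ref{commutative-diagrams-lemma}), and then invokes Theorem~\ref{thm:first-relation}. Your only misstep is the final paragraph: the commutation for $T_j^{(n)}(p^2,\chi)$ is no harder than for $T^{(n)}(p,\chi)$, since the argument operates purely at the level of double cosets (using $\kappa(l_1)^{-1}\Gamma_0^{(n)}(N)\kappa(l_1)=\Gamma_0^{(n)}(N)$) and never requires explicit coset representatives, so the detour through the averaged operators $\widetilde{T}_j^{(n)}(p^2,\chi)$ is unnecessary.
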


With Theorem \ref{thm:any-cusp-relation} in place we then proceed to the main goal of this paper, which is to describe the action of the Hecke operators on the full space of Eisenstein series.  We continue to work with $N$ squarefree and $p \nmid N$ prime.  Since we are working with ``good'' Hecke operators it is not difficult to show, using the normality of these Hecke operators with respect to the Petersson inner product, that the Klingen lift of a degree $r$ cuspidal eigenform to a degree $n$ modular form is again an eigenform.  Note however that the definition of both the Siegel lowering operator and the Klingen lift depend on the choice of coset representative.  In many situations, for example defining cuspidality, the exact definition is ultimately not important; however, to understand these operations in the presence of Hecke operators requires some care and consistency.  In \S\ref{sctn:intertwining-relations-any-cusp} and \S\ref{action-of-hecke-on-eis} we clarify the dependency of the Klingen lift and Siegel lowering operator on ancillary choices of coset representatives and provide definitions motivated by the results in \S\ref{sctn:satake-compactification-description} regarding the boundary of $\Gamma_0^{(n)}(N) \backslash \mathfrak{H}_N^*$.\\  

With this theory in place it is then a simple matter to describe the action of the Hecke operators.  Since the results of \S\ref{sctn:satake-compactification-description} show that each boundary component of $\Gamma_0^{(n)}(N) \backslash \mathfrak{H}_n^*$ is itself of the form $\Gamma_0^{(r)}(N) \backslash \mathfrak{H}_r$ for some $0 \leq r < n$, we are able to work iteratively; keeping track of the action of the Hecke operators at each stage we are eventually able to provide formulas for the degree $n$ Hecke eigenvalues in terms of the Hecke eigenvalues of the degree $r$ cusp form.  These formulas are specific to the $r$-cusp which we lift from.  Lifting a basis of cuspidal eigenforms from all $r$-cusps, for all $0 \leq r < n$, the lifting process provides a basis of eigenforms for the space of Eisenstein series.  The main result of this paper is a formula for the Hecke eigenvalues of this basis.\\

To illustrate the point, let us now state the result a simple, illustrative case, namely the Eisenstein series of degree two associated to a cusp form of degree one, so $n=2$ and $r=1$ in the above paragraph.  Let $N$ be squarefree, $l_1$ a divisor of $N$ corresponding to a $1$-cusp on $\Gamma_0^{(2)}(N) \backslash \mathfrak{H}_2^*$, set $l_0 = N/l_1$, let $F \in \mathcal{S}_k^{(1)}(N, \overline{\chi}_{l_1} \chi_{l_0})$ be a cusp form of degree one on the $1$-cusp corresponding to $l_1$, and write $E_{l_1}(F) \in \mathcal{M}_k^{(2)}(N, \chi)$ for the Klingen lift of $F$.\begin{footnote}{We require $F$ to have character $\overline{\chi}_{l_1} \chi_{l_0}$ so that $E_{l_1}(F)$ has character $\chi$ (c.f. the definition of $E_{l_1}$ in \S\ref{action-of-hecke-on-eis}).}\end{footnote}  We assume that $F$ is an eigenfunction of the usual Hecke operator $T^{(1)}(p, \overline{\chi}_{l_1} \chi_{l_0})$, say with eigenvalue $\lambda^{(1)}(p, \overline{\chi}_{l_1} \chi_{l_0})$.  We also write $\lambda_0^{(1)}(p^2, \overline{\chi}_{l_1} \chi_{l_0}) = p^{k-3} \overline{\chi}_{l_1} \chi_{l_0}(p)$ and $\lambda_1^{(1)}(p^2, \overline{\chi}_{l_1} \chi_{l_0}) = \lambda^{(1)}(p, \overline{\chi}_{l_1} \chi_{l_0})^2 - (1 + \chi_{l_1} \overline{\chi}_{l_0}(p) p^{k-1})\lambda_0^{(1)}(p^2, \overline{\chi}_{l_1} \chi_{l_0})$.\begin{footnote}{Here $\lambda_0^{(1)}(p^2, \overline{\chi}_{l_1} \chi_{l_0})$ is the eigenvalue of (any) $F \in \mathcal{S}_k^{(1)}(N, \overline{\chi}_{l_1} \chi_{l_0})$ under $T_0^{(1)}(p^2, \overline{\chi}_{l_1} \chi_{l_0})$, which is an element of the Hecke algebra that acts as this scalar.  Similarly, $\lambda_1^{(1)}(p^2, \overline{\chi}_{l_1} \chi_{l_0})$ is the eigenvalue of $F$ under $T_1^{(1)}(p^2, \overline{\chi}_{l_1} \chi_{l_0})$; the relation $T_1^{(1)}(p^2, \overline{\chi}_{l_1} \chi_{l_0}) = T^{(1)}(p, \overline{\chi}_{l_1} \chi_{l_0})^2 - (1 + \chi_{l_1} \overline{\chi}_{l_0}(p) p^{k-1})T_0^{(1)}(p^2, \overline{\chi}_{l_1} \chi_{l_0})$ holds in the Hecke algebra.}\end{footnote}  For $n=2$ it suffices to consider the operators $T^{(2)}(p, \chi)$ and $T_1^{(2)}(p^2, \chi)$, since $T_0^{(2)}(p^2, \chi)$ acts as a scalar and $T_2^{(2)}(p^2, \chi)$ is algebraically dependent on the others.  For the interesting operators we have:

\begin{theorem}  Continue with the notation of the previous paragraph, and let $p \nmid N$ be prime.  Then
\[\begin{aligned}E_{l_1}(F) | T^{(2)}(p, \chi) &= \chi_{l_1}(p^2) c^{(1)}(\overline{\chi}_{l_1} \chi_{l_0}) \lambda^{(1)}(p, \overline{\chi}_{l_1} \chi_{l_0}) E_{l_1}(F), \\
E_{l_1}(F) | T_1^{(2)}(p^2, \chi)&= \chi_{l_1}(p^4)[c_{1, 1}^{(1)}(\overline{\chi}_{l_1} \chi_{l_0})\lambda_1^{(1)}(p^2, \overline{\chi}_{l_1} \chi_{l_0}) + c_{1, 0}^{(1)}(\overline{\chi}_{l_1} \chi_{l_0}) \lambda_0^{(1)}(p^2, \overline{\chi}_{l_1} \chi_{l_0})]E_{l_1}(F),\end{aligned}\]
with $c^{(1)}$, $c_{1, 1}^{(1)}$, and $c_{1, 0}^{(1)}$ as in Theorem \ref{thm:first-relation}.\end{theorem}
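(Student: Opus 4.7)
The strategy decouples two tasks: (a) showing that $E_{l_1}(F)$ is a Hecke eigenform for both operators, and (b) computing its eigenvalues. Once (a) is in hand, from $E_{l_1}(F) | T = \mu E_{l_1}(F)$, applying $\Phi_{l_1}$ to both sides gives $\mu F$ on the right (using the convention, built in at \S\ref{action-of-hecke-on-eis}, that $\Phi_{l_1} \circ E_{l_1} = \mathrm{id}$), while Theorem \ref{thm:any-cusp-relation} rewrites the left side as an explicit combination of degree-one Hecke operators applied to $F$. The given eigenvalue data for $F$ then collapses the left side to an explicit scalar multiple of $F$, and comparing with $\mu F$ reads off $\mu$.

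Task (a) rests on the Hecke-stability of the $l_1$-Klingen subspace $E_{l_1}(\mathcal{S}_k^{(1)}(N, \overline{\chi}_{l_1}\chi_{l_0})) \subset \mathcal{M}_k^{(2)}(N, \chi)$. The full Eisenstein subspace decomposes as an orthogonal sum of such $l_1$-Klingen pieces (one for each divisor $l_1 \mid N$) together with a Siegel-Eisenstein part. Since $p \nmid N$, the operators $T^{(2)}(p, \chi)$ and $T_1^{(2)}(p^2, \chi)$ are normal with respect to the Petersson inner product, so they preserve this orthogonal decomposition. Because $E_{l_1}$ is a bijection onto its image, the induced endomorphisms on $\mathcal{S}_k^{(1)}(N, \overline{\chi}_{l_1}\chi_{l_0})$ lie in the commutant of the full degree-one Hecke algebra; in particular they are diagonal in the joint eigenbasis of $T^{(1)}(p, \overline{\chi}_{l_1}\chi_{l_0})$ and the scalar operator $T_0^{(1)}(p^2, \overline{\chi}_{l_1}\chi_{l_0})$. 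Since $F$ belongs to such a joint eigenbasis --- its eigenvalue under $T_1^{(1)}(p^2, \overline{\chi}_{l_1}\chi_{l_0})$ being $\lambda_1^{(1)}(p^2, \overline{\chi}_{l_1}\chi_{l_0})$ by the stated identity in the degree-one Hecke algebra --- we conclude that $E_{l_1}(F)$ is a joint eigenform for the two operators of interest.

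For task (b) we specialise Theorem \ref{thm:any-cusp-relation} at $n = 2$. For $T^{(2)}(p, \chi)$ the right-hand side consists of a single term, and replacing $F | T^{(1)}(p, \overline{\chi}_{l_1}\chi_{l_0})$ by $\lambda^{(1)}(p, \overline{\chi}_{l_1}\chi_{l_0}) F$ gives the first stated eigenvalue. For $T_1^{(2)}(p^2, \chi)$ (i.e.\ $j = 1$) the right-hand side has three terms, indexed by $j, j-1, j-2 = 1, 0, -1$; the last vanishes by the stated convention, and replacing $F | T_1^{(1)}(p^2)$ and $F | T_0^{(1)}(p^2)$ by their eigenvalues yields the claim. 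The main obstacle is entirely in task (a): one must ensure that the coset-representative-dependent definitions of $\Phi_{l_1}$ and $E_{l_1}$ fixed in \S\ref{sctn:intertwining-relations-any-cusp}--\S\ref{action-of-hecke-on-eis} are chosen so that $\Phi_{l_1} \circ E_{l_1} = \mathrm{id}$ and so that the image of $E_{l_1}$ is Hecke-stable on the nose, rather than merely up to the ambiguity of those choices. Once these normalisations have been pinned down, the remainder of the argument is a direct specialisation of Theorem \ref{thm:any-cusp-relation}.
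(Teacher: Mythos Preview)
Your overall strategy matches the paper's (Lemmas \ref{lem:klingen-drop-one-degree} and \ref{lem:klingen-drop-one-degree-j}, which are the $n=r+1$ base case of Theorem \ref{thm:action-of-hecke-on-klingen}): establish that $E_{l_1}(F)$ is an eigenform, then read off the eigenvalue via $\Phi_{l_1}$ and Theorem \ref{thm:any-cusp-relation}. Task (b) is correct and is exactly what the paper does.

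Task (a), however, has a gap. The claim that the induced endomorphism on $\mathcal{S}_k^{(1)}$ lies in the commutant of the degree-one Hecke algebra does not follow from ``$E_{l_1}$ is a bijection onto its image''; to see this you would already need the intertwining relation, which in fact shows directly that the induced operator is (a scalar times) $T^{(1)}(p, \overline{\chi}_{l_1}\chi_{l_0})$, respectively an explicit combination of $T^{(1)}_1(p^2)$ and $T^{(1)}_0(p^2)$. Even granting the commutant claim, diagonality in a joint eigenbasis would require those eigenspaces to be one-dimensional, i.e.\ a multiplicity-one statement that is not among the hypotheses (you only assume $F$ is an eigenform for $T^{(1)}(p)$ at the single prime $p$). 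The orthogonality of the individual $l_1$-Klingen pieces is also asserted without argument.

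The paper avoids all of this with a more direct manoeuvre. It uses only that the \emph{full} Eisenstein subspace (orthogonal complement of cusp forms) is Hecke-stable, since the operators at $p\nmid N$ are normal. With $\lambda$ the candidate eigenvalue, set $G = E_{l_1}(F)\,|\,T - \lambda\, E_{l_1}(F)$, which is therefore Eisenstein. Now apply $\Phi_{l_1'}$ for every divisor $l_1' \mid N$ and use Theorem \ref{thm:any-cusp-relation}: at $l_1' = l_1$ the eigenvalue of $F$ makes the two terms cancel, and at $l_1' \neq l_1$ one uses $\Phi_{l_1'}(E_{l_1}(F)) = 0$ (since $F$ is cuspidal). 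Thus $G$ vanishes at every $(n-1)$-cusp, so is both cuspidal and Eisenstein, hence zero. This gives eigenform-ness and the eigenvalue in one stroke, with no need for Hecke-stability of the individual $l_1$-pieces, their mutual orthogonality, or any commutant/multiplicity-one input.
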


This is a special case of the main result, Theorem \ref{thm:action-of-hecke-on-klingen}, which gives recursive formulas in any degree $n$ (still assuming $N$ is squarefree and $p \nmid N$).  In the statement of Theorem \ref{thm:action-of-hecke-on-klingen} we opt to leave the formulas recursive, since the solution of the recursion does not seem to be particularly illuminating.\\

Specializing to the case of Siegel--Eisenstein series, i.e. elements of $\mathcal{M}_k^{(n)}(N, \chi)$ lifted from $0$-cusps, Theorem \ref{thm:action-of-hecke-on-klingen} gives another proof of the results of \cite{Walling2012} and \cite{Walling2014} in the case when $N$ is squarefree and $p \nmid N$.  It is not clear how the methods of this paper should extend to the case of $N$ no longer squarefree, since whilst it is still possible to explicitly describe the boundary of $\Gamma_0^{(n)}(N) \backslash \mathfrak{H}_n^*$, it is no longer the case that we can reach each cusp with Atkin--Lehner style operators.  We use this latter fact crucially in the case when $N$ is squarefree, and experience from the degree $n=1$ case indicates that the relationship between Hecke operators at cusps which are \textit{not} related by Atkin--Lehner style operators will be much less transparent.  On the other hand, we expect that the methods of this paper can be extended to the case $p \mid N$ when $N$ is squarefree.  Note that the Hecke operators when $p \mid N$ are no longer normal, so the Klingen lifts may no longer be eigenfunctions of these bad Hecke operators.  In fact the non-diagonality of the action of the bad Hecke operators on Siegel--Eisenstein series of degree two, squarefree level, and trivial character described in \cite{Walling2012} gives enough linear relations to allow one to deduce formulas for the Fourier coefficients of a full basis for the space of level $N$ Siegel--Eisenstein series from well-known formulas in level one, as explained in \cite{Dickson2014b}.  It would be interesting to investigate this possibility more generally in the case of Klingen--Eisenstein series, and compare the results to the formulas of \cite{Boecherer1982} for the Fourier coefficients of degree two Klingen--Eisenstein series.\\

\textbf{Acknowledgements.}  This work formed part of the author's PhD thesis, and he would like to thank his supervisor Lynne Walling for her guidance.  He would also like to thank his examiners Tim Dokchitser and Nils Skoruppa for their comments.

\section{Preliminaries}\label{sctn:preliminaries}

For $n \in \Z_{\geq 1}$ the algebraic group $\GSp_{2n}$ is defined as
\[\GSp_{2n} = \{g \in \GL_{2n};\: {}^tg J g = \mu_n(g) J\text{ for some }\mu_n(g) \in \GL_1\},\]
where 
\[J = \begin{pmatrix} 0_n & -1_n \\ 1_n & 0_n\end{pmatrix}.\]
The map $\mu_n : \GSp_{2n} \to \GL_1$ is a homomorphism, we define $\Sp_{2n}$ as its kernel.  If $R$ is a subring of $\R$ we write $\GSp_{2n}^+(R)$ for the subgroup of $\GSp_{2n}(R)$ consisting of those $g$ with $\mu_n(g) > 0$.  Let
\[\mathfrak{H}_n = \{Z \in \C^{n \times n}_{\text{sym}};\: \Im(Z)>0\}\]
be Siegel's upper half space of degree $n$.  Then $\GSp_{2n}^+(\R)$ acts on $\mathfrak{H}_n$ by
\begin{equation}\label{eqn:gsp4-action-on-hn}(\gamma, Z) \mapsto \gamma \langle Z \rangle = (AZ + B)(CZ+D)^{-1},\end{equation}
where $\gamma = \left(\begin{smallmatrix} A & B \\ C & D \end{smallmatrix}\right) \in \GSp_{2n}^+(\R)$.  For $k$ a positive integer we also define an action of $\GSp_{2n}^+(\R)$ on functions $F : \mathfrak{H}_n \to \C$ by
\[(F|_k \gamma)(Z) = \mu_n(\gamma)^{nk/2} j(\gamma, Z)^{-k} F(\gamma \langle Z \rangle),\]
where $j(\gamma, Z) = \det(CZ + D)$.  We will often simply write $F|\gamma$ in place of $F|_k \gamma$, since $k$ should be clear from the context.  It will be useful for us to have an interpretation for this formula even when $n=0$.  In this, $\mathfrak{H}_0$ becomes a point, the point denoted by $\infty$; a function $F : \mathfrak{H}_0 \to \C$ is therefore constant, and we identify $F$ with the value it takes.  Finally, for $n=0$, any $k$, and any $\gamma$, the action $F|_k \gamma$ is taken to be trivial.\\

We work with modular forms on the congruence subgroup
\[\Gamma_0^{(n)}(N) = \left\{\begin{pmatrix} A & B \\ C & D\end{pmatrix} \in \Sp_{2n}(\Z);\: C \equiv 0 \bmod N\right\}.\]
Given a Dirichlet character $\chi$ modulo $N$, we define a character of $\Gamma_0^{(n)}(N)$, also denoted $\chi$, by $\chi\left(\left(\begin{smallmatrix} A & B \\ C & D \end{smallmatrix}\right)\right) = \chi(\det(D)) (= \overline{\chi}(\det(A)))$.  Given $n \in \Z_{\geq 2}$, $k, N \in \Z_{\geq 1}$ and $\chi$ a Dirichlet character modulo $N$, we define
\[\mathcal{M}_k^{(n)}(N, \chi) = \{F:\mathfrak{H}_n \to \C;\:F\text{ is holomorphic};\: F|_k \gamma = \chi(\gamma) F\text{ for all }\gamma \in \Gamma_0^{(n)}(N)\}.\]
When $n=1$ we use the same definition, except that it is now necessary to additionally impose that $F$ be regular at the cusps.  The space $\mathcal{M}_k^{(n)}(N, \chi)$ is finite dimensional, and is equipped with a partially defined inner product
\begin{equation}\label{eqn:petersson-def}\langle F, G \rangle := \frac{1}{\vol(\Gamma_0^{(n)}(N) \backslash \mathfrak{H}_n)} \int_{\Gamma_0^{(n)}(N) \backslash \mathfrak{H}_n} F(Z) \overline{G(Z)} \det(Y)^{k} d\mu(Z),\end{equation}
where $Z=X+iY$ is the decomposition in to real and imaginary parts, and $d\mu(Z) = dX dY/\det(Y)^{n+1}$ is (a fixed normalization of) the $\Sp_{2n}(\R)$-invariant measure on $\mathfrak{H}_n$.\\

The Siegel lowering operator $\Phi$ is defined, for $F \in \mathcal{M}_k^{(n)}(N, \chi)$ and $Z' \in \mathfrak{H}_{n-1}$, by
\[\Phi(F)(Z') = \lim_{\lambda \rightarrow \infty} F\left(\begin{pmatrix} Z' & 0 \\ 0 & i \lambda \end{pmatrix}\right).\]
For $\gamma \in \Sp_{2n}(\Q)$, define $\Phi_{\gamma}(F) = \Phi(F|\gamma).$  The space of cusp forms is defined as
\[\mathcal{S}_k^{(n)}(N, \chi) = \{F \in \mathcal{M}_k^{(n)}(N, \chi);\: \Phi_{\gamma}(F) = 0\text{ for all }\gamma \in \Sp_{2n}(\Q)\}.\]
Note that the condition imposing cuspidality is equivalent to the (finite) condition where one replaces all $\gamma \in \Sp_{2n}(\Q)$ with a system of representatives for $\Gamma_0^{(n)}(N) \backslash \Sp_{2n}(\Q) / P_{n, n-1}(\Q)$, where for $0 \leq r \leq n$ positive integers $P_{n, r}$ is the parabolic subgroup
\begin{equation}\label{eqn:Pnr-definition}P_{n, r} = \left\{\begin{pmatrix} A_{11} & 0 & B_{11} & B_{12} \\ A_{21} & A_{22} & B_{21} & B_{22} \\ C_{11} & 0 & D_{11} & D_{12} \\ 0 & 0 & 0 & D_{22}\end{pmatrix};\: *_{11}\text{ size }r;\: *_{22}\text{ size }(n-r)\right\}.\end{equation}
There is a surjection $\omega_{n, r} : P_{n, r} \to \Sp_{2r}$, given by
\begin{equation}\label{eqn:omeganr-definition}\omega_{n, r}\left(\begin{pmatrix} A_{11} & 0 & B_{11} & B_{12} \\ A_{21} & A_{22} & B_{21} & B_{22} \\ C_{11} & 0 & D_{11} & D_{12} \\ 0 & 0 & 0 & D_{22}\end{pmatrix}\right) = \begin{pmatrix} A_{11} & B_{11} \\ C_{11} & D_{11} \end{pmatrix}.\end{equation}
Note that $\Phi_{\gamma}$ is not well-defined on the double coset $\Gamma_0^{(n)}(N) \gamma P_{n, r}(\Q)$.  Indeed, if $\gamma' \in \Gamma_0^{(n)}(N)$ and $\delta \in P_{n, n-1}(\Q)$ is written in the form \eqref{eqn:Pnr-definition}, then
\begin{equation}\label{eqn:lowering-dependency-on-rep}\Phi_{\gamma' \gamma \delta}(F) = \chi(\gamma')D_{22}^{-k} \Phi_\gamma(F) | \omega_{n, n-1}(\delta).\end{equation}
Note that $D_{22} \in \Q^{\times}$, since $\delta \in P_{n, n-1}(\Q)$, so the choice of representative does not matter for defining cuspidality.  However, more care is required for other applications.

\begin{remark}\label{rmk:character-parity}  Note, since $-1_{2n} \in \Gamma_0^{(n)}(N)$, that if $\chi((-1)^n) \neq (-1)^{nk}$ then $\mathcal{M}_k^{(n)}(N, \chi) = 0$.  In \S\ref{sctn:satake-compactification-description} we will choose representatives and define lowering operators
\[\Phi_l : \mathcal{M}_k^{(n)}(N, \chi) \to \mathcal{M}_k^{(n-1)}(N, \overline{\chi_l} \chi_{N/l}),\]
the cuspidality condition being $\Phi_l(F) \equiv 0$ for each $l$.  The above vanishing condition then applies to the target of this map, so if $\chi((-1)^{n-1}) \neq (-1)^{(n-1)k}$ then $\Phi_l(F)$ must be zero.  Thus if $\mathcal{M}_k^{(n)}(N, \chi)$ is to contain non-cusp forms then we require $\chi(-1) = (-1)^k$, and we therefore make this natural assumption.  Note that if $\chi(-1) \neq (-1)^k$ then $\mathcal{M}_k^{(n)}(N, \chi)$ may contain cusp forms, for example $\mathcal{S}_{35}^{(2)}(1)$ is non-zero. \end{remark}

Let $F \in \mathcal{M}_k^{(n)}(N, \chi)$, so it has a Fourier expansion
\[F(Z) = \sum_{T\geq 0} a(T; F) e(\tr(TZ)),\]
where $T$ varies over all positive semi-definite matrices symmetric matrices of size $n$ which are semi-integral (i.e. $T = (t_{ij})$ with $t_{ij} \in \frac{1}{2}\Z, t_{ii} \in \Z$), and $e(z) = e^{2 \pi i z}$ for $z \in \C$.  It will be convenient for us to introduce another indexing set for the Fourier expansion.  Let $\Lambda$ be an even lattice, i.e. a lattice equipped with a $\Z$-valued quadratic form.  Attached to $\Lambda$ we have a collection of even integral (i.e. $T = (t_{ij})$ with $t_{ij} \in \Z$, $t_{ii} \in 2\Z$) Gram matrices $\{{}^tGTG;\: G \in \GL_n(\Z)\}$.  Since we assume that $\chi(-1) = (-1)^k$, the modularity of $F$ then implies $a(\frac{1}{2}T; F) = a(\frac{1}{2}{}^tGTG; F)$.\begin{footnote}{To circumvent the assumption $\chi(-1) = (-1)^k$ one may work with oriented lattices, but since we are interested in Eisenstein series the point is moot.}\end{footnote}  It then make sense to define $a(\Lambda; F) = a(\frac{1}{2}T; F)$ where $T$ is the Gram matrix for any basis of $\Lambda$.  Now varying $\Lambda$ over all even lattices we obtain all possible (classes of) $T$, so allowing $\Lambda$ to vary thus in the following sum we have
\begin{equation}\label{eqn:lattice-fourier-expansion}F(Z) = \sum_{\Lambda} a(\Lambda; F) e\{\Lambda Z\}.\end{equation}
Here 
\[e\{\Lambda Z\} = \sum_{G \in O(\Lambda) \backslash \GL_n(\Z)} e\left(\frac{1}{2}\tr({}^tG T G Z)\right),\]
where $O(\Lambda)$ is the orthogonal group of the lattice $\Lambda$.  If we refer to $a(\Lambda; F)$ when the quadratic form on $\Lambda$ is not integral then we understand $a(\Lambda; F) = 0$.\\

We now introduce the Hecke operators.  Let
\[\Delta_0^{(n)}(N) = \left\{\delta = \begin{pmatrix} A & B \\ C & D \end{pmatrix} \in \GSp_{2n}^+(\Q) \cap \Z^{2n \times 2n};\: C \equiv 0 \bmod N;\: \gcd(\det(A), N) = 1\right\}.\]
The character $\chi$ extends to a character of $\Delta_0^{(n)}(N)$ by $\chi(\delta) = \overline{\chi}(\det(A))$.  We write $\mathcal{H}^{(n)}(N)$ for the Hecke algebra of the pair $(\Gamma_0^{(n)}(N), \Delta_0^{(n)}(N))$.  Focussing on a prime $p$ we define the local Hecke algebra $\mathcal{H}_p^{(n)}(N)$ to be the ring of $\Z$-linear combinations of double coset $\Gamma_0^{(n)}(N) M \Gamma_0^{(n)}(N)$, where
\[M = \{g \in \Delta_0^{(n)}(N);\: \mu(g)\text{ is a power of }p\}.\] 
Define the double cosets 
\begin{equation}\label{eqn:hecke-cosets} \begin{aligned} T^{(n)}(p) &:= \Gamma_0^{(n)}(N) \begin{pmatrix} 1_n & \\ & p1_n \end{pmatrix} \Gamma_0^{(n)}(N) \\
T_j^{(n)}(p^2) &:= \Gamma_0^{(n)}(N) \begin{pmatrix} 1_j & & & \\ & p1_{n-j} & & \\ & & p^2 1_j & \\ & & & p1_{n-j}\end{pmatrix} \Gamma_0^{(n)}(N), \text{ for }0 \leq j \leq n;\end{aligned} \end{equation}
it is well-known that $\{T^{(n)}(p)\} \cup \{T_j^{(n)}(p);\: 0 \leq j \leq n\}$ generates the algebra $\mathcal{H}_p^{(n)}(N)$.  We let an element $\Gamma_0^{(n)}(N) \alpha \Gamma_0^{(n)}(N) \in \mathcal{H}_p^{(n)}(N)$ act on $\mathcal{M}_k^{(n)}(N, \chi)$ by writing
\[\Gamma_0^{(n)}(N) \alpha \Gamma_0^{(n)}(N) = \bigsqcup_v \Gamma_0^{(n)}(N) \alpha_v\]
and defining
\begin{equation}\label{eqn:hecke-op-normalization}F | \Gamma_0^{(n)}(N) \alpha \Gamma_0^{(n)}(N) = \mu(\alpha)^{\frac{nk}{2} - \frac{n(n+1)}{2}} \sum_v \overline{\chi}(\alpha_v) F| \alpha_v.\end{equation}
This is extended by linearity to an action of $\mathcal{H}_p^{(n)}(N) \otimes_\Z \C$.  We write $T^{(n)}(p, \chi)$ and $T_j^{(n)}(p^2, \chi)$ for the operators on $\mathcal{M}_k^{(n)}(N, \chi)$ defined by \eqref{eqn:hecke-cosets}.

\begin{remark}  Note that our normalisation of the slash operator differs from the classical (Andrianov) notation since we include the factor of $\mu_n(\gamma)^{nk/2}$ in order to force scalar matrices to act trivially.  However, this effect is compensated in our normalisation of the Hecke operators; the result is that our Hecke operators are the same as those in the Andrianov notation, except that we have interchanged the role of $j$ and $n-j$ in $T_j^{(n)}(p^2, \chi)$.  When $n=1$, both our slash operators and our Hecke operators are normalized as in \cite{Miyake2006}.  Note that \cite{HafnerWalling2002} uses a definition of Hecke operators that is equivalent to our double coset definition except that the representative matrices differ by a factor of $p$.  This makes no difference because we both normalize the slash operator so that scalar matrices act trivially.\end{remark}

The formula for the action of Hecke operators on Fourier expansions was found in \cite{HafnerWalling2002}.  It is most conveniently stated using the indexing of Fourier coefficients by lattices $\Lambda$ as above, and moreover it is easiest to work not with the operators $T_j(p^2)$ but rather with certain averaged versions, which we now introduce.  We will use these operators extensively in \S\ref{sctn:first-relations}, but they will not appear elsewhere in the paper.  To define them, fist let ${n \choose r}_p$ be the Gaussian binomial coefficient, i.e.
\[{n \choose r}_p = \prod_{i=1}^r \frac{p^{n-i+1}-1}{p^{r-i+1}-1}.\]
Then, for $F \in \mathcal{M}_k^{(n)}(N, \chi)$, 
\begin{equation}\label{eqn:tilde-T-definition} F | \widetilde{T}^{(n)}_j(p^2, \chi) := p^{(n-j)(n-k+1)} \overline{\chi}(p^{n-j})\sum_{t=0}^j {n-t \choose j-t}_p F | T_t^{(n)}(p^2, \chi).\end{equation}
In order to state the action of these operators on Fourier expansions we first introduce some useful notation:

\begin{definition}\label{dfn:p-type}  Let $\Lambda$ be a lattice, and $p$ be a prime.  Let $\Omega$ be a lattice such that $p\Lambda \subset \Omega \subset \Lambda$.  By the invariant factor theorem we can write
\[\begin{aligned} \Lambda &= \Lambda_0 \oplus \Lambda_1, \\
\Omega &= \Lambda_0 \oplus p\Lambda_1. \end{aligned} \] 
We call the tuple $(\rk(\Lambda_0), \rk(\Lambda_1))$ the $p$-type of $\Omega$ (in $\Lambda$).  Similarly, let $\Omega$ be a lattice such that $p \Lambda \subset \Omega \subset p^{-1} \Lambda$.  By the invariant factor theorem we can write
\[\begin{aligned} \Lambda &= \Lambda_0 \oplus \Lambda_1 \oplus \Lambda_2, \\
\Omega &= p^{-1} \Lambda_0 \oplus \Lambda_1 \oplus p\Lambda_2. \end{aligned}\]
We (again) call the tuple $(\rk(\Lambda_0), \rk(\Lambda_1), \rk(\Lambda_2))$ the $p$-type of $\Omega$ (in $\Lambda$).  \end{definition}

\begin{theorem}[Hafner--Walling, \cite{HafnerWalling2002}]\label{action-of-Tp-on-fourier-exp}  Let $F \in \mathcal{M}_k^{(n)}(N, \chi)$ have Fourier expansion (\ref{eqn:lattice-fourier-expansion}), let $p$ be any prime, and write 
\[(F|T^{(n)}(p, \chi))(Z) = \sum_\Lambda a(\Lambda; F|T^{(n)}(p, \chi)) e\{\Lambda Z\}.\]  
Then
\[a(\Lambda; F|T^{(n)}(p, \chi)) = \sum_{p\Lambda \subset \Omega \subset \Lambda} A(\Omega, \Lambda; F|T^{(n)}(p, \chi))\]
with $A(\Omega, \Lambda; F|T^{(n)}(p, \chi))$ defined as follows: let $(m_0, m_1)$ be the $p$-type of $\Omega$ in $\Lambda$, and set
\[E^{(n)}(\Omega, \Lambda) = m_0k + \frac{m_1(m_1+1)}{2} - \frac{n(n+1)}{2};\]
and if $\Omega$ has quadratic form $Q$ let $\Omega^{1/p}$ denote the same lattice with the quadratic form $x \mapsto \frac{1}{p}Q(x)$ (which may not be integral); then
\[A(\Omega, \Lambda; F|T^{(n)}(p)) = \chi([\Omega:p\Lambda])p^{E(\Omega, \Lambda)}a(\Omega^{1/p}; F).\]\end{theorem}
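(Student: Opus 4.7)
The plan is to compute the Fourier expansion of $F|T^{(n)}(p, \chi)$ directly from the slash operator action on an explicit set of coset representatives, which is the approach of Hafner and Walling. The whole argument is bookkeeping organized around a bijection between coset representatives and sublattices $\Omega \subset \Lambda$ with $p\Lambda \subset \Omega \subset \Lambda$.

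First I would write down explicit coset representatives for the double coset $\Gamma_0^{(n)}(N) \diag(1_n, p 1_n) \Gamma_0^{(n)}(N)$, taking them in upper triangular form $\alpha = \begin{pmatrix} A & B \\ 0 & D \end{pmatrix}$ with $AD^T = p 1_n$. Here $A$ ranges over representatives for $\GL_n(\Z)$-orbits on integral matrices of Smith type $\diag(1_{m_0}, p 1_{m_1})$ (with $m_0 + m_1 = n$), and $B$ ranges over symmetric matrices modulo an equivalence determined by $D$. The key conceptual observation is that this set of cosets is in natural bijection with the sublattices $\Omega$ satisfying $p\Lambda \subset \Omega \subset \Lambda$: the matrix $A$ determines such a sublattice via its columns in the basis of $\Lambda$, and its Smith invariants coincide with the $p$-type of $\Omega$ in $\Lambda$ in the sense of Definition \ref{dfn:p-type}.

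Next I would apply each representative to $F$. Using $\alpha\langle Z\rangle = AZD^{-1} + BD^{-1}$ and the identity $D^{-1} = A^T/p$ (a consequence of $AD^T = p 1_n$), the Fourier expansion of $F$ yields
\[(F|_k \alpha)(Z) = p^{nk/2} \det(D)^{-k} \sum_S a(S;F) \, e(\tr(A^T S A Z / p)) \, e(\tr(A^T S B / p)).\]
Summing over $B$ in the representatives, with the character weight $\overline{\chi}(\alpha_v) = \chi(\det A)$, the exponential sum over $B$ collapses to an integrality condition on $A^T S A / p$ that forces $S$ to correspond to a Gram matrix of $\Omega^{1/p}$, where $\Omega$ is the sublattice associated to $A$. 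This identification produces the Fourier coefficient $a(\Omega^{1/p}; F)$ of the statement, and the character contribution assembles into $\chi([\Omega : p\Lambda])$.

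Finally I would tally all the $p$-powers arising from the slash operator factor $p^{nk/2}\det(D)^{-k}$, the Hecke normalization $\mu(\alpha)^{nk/2 - n(n+1)/2}$ of \eqref{eqn:hecke-op-normalization}, the cardinality of the $B$-representatives, and the change of Fourier index from $S$ to $\Omega^{1/p}$, to arrive at the stated exponent $E^{(n)}(\Omega, \Lambda) = m_0 k + m_1(m_1+1)/2 - n(n+1)/2$. The main obstacle is precisely this bookkeeping: the combinatorial factor from counting the $B$-representatives modulo the equivalence depending on $D$ interacts nontrivially with the passage from the semi-integral Fourier index $S$ to the lattice-indexed coefficient $a(\Omega^{1/p}; F)$ (which itself contains a sum over an $O(\Lambda)$-orbit through $e\{\Lambda Z\}$), and verifying that the combined contribution is exactly $p^{E^{(n)}(\Omega, \Lambda)}$, together with the verification that distinct cosets really do biject with distinct sublattices, is the technical heart of \cite{HafnerWalling2002}.
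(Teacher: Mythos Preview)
The paper does not supply its own proof of this theorem: it is stated as a result of Hafner--Walling and simply cited from \cite{HafnerWalling2002}. Your sketch is consistent with the standard argument in that reference, so there is nothing to compare against in the paper itself.

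One small imprecision in your write-up: the full set of coset representatives is not in bijection with the sublattices $p\Lambda \subset \Omega \subset \Lambda$; rather, the cosets fibre over the sublattices, with the fibre over each $\Omega$ given by the $B$-representatives. You clearly understand this from the way you handle the $B$-sum later, but the sentence ``this set of cosets is in natural bijection with the sublattices'' should be adjusted accordingly.
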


\begin{theorem}[Hafner--Walling, \cite{HafnerWalling2002}]\label{action-of-Tjp2-on-fourier-exp}  Let $F \in \mathcal{M}_k^{(n)}(N, \chi)$ have Fourier expansion (\ref{eqn:lattice-fourier-expansion}), let $p$ be any prime, let $0 \leq j \leq n$, and write 
\[(F|\widetilde{T}^{(n)}_j(p^2, \chi))(Z) = \sum_\Lambda a(\Lambda; F|\widetilde{T}^{(n)}_j(p^2, \chi))e\{\Lambda Z\}.\]  
Then
\[a(\Lambda; F|\widetilde{T}^{(n)}_j(p^2, \chi)) = \sum_{p\Lambda \subset \Omega \subset \frac{1}{p}\Lambda} A(\Omega, \Lambda; F|\widetilde{T}^{(n)}_j(p^2))\]
with $A(\Omega, \Lambda; F|\widetilde{T}^{(n)}_j(p^2)$ defined as follows: let $(m_0, m_1, m_2)$ be the $p$-type of $\Omega$ in $\Lambda$, and set
\[\begin{aligned} E_j(\Omega, \Lambda) &= k(m_0-m_2+j)+m_2(m_2+m_1+1)\\
&\qquad +\frac{(m_1-n+j)(m_1-n+j+1)}{2} - j(n+1);\end{aligned}\]
in the notation of Definition \ref{dfn:p-type} let $\alpha_j(\Omega, \Lambda)$ denote the number of totally isotropic subspaces of $\Lambda_1/p\Lambda_1$ of codimension $n-j$; then
\[A(\Omega, \Lambda; F|\widetilde{T}^{(n)}_j(p^2)) = \chi(p^{j-n}[\Omega:p\Lambda]) p^{E_j(\Omega, \Lambda)}\alpha_j(\Omega, \Lambda) a(\Omega; F).\]\end{theorem}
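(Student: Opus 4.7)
The plan is to compute $F | T^{(n)}_t(p^2, \chi)$ directly on Fourier expansions for each $0 \leq t \leq n$ via explicit coset representatives, and then combine the resulting formulas using the definition (\ref{eqn:tilde-T-definition}) to extract the cleaner statement for $\widetilde{T}^{(n)}_j(p^2, \chi)$. A standard analysis (compare Andrianov or Freitag) produces representatives of the right cosets in the decomposition $\Gamma_0^{(n)}(N) \diag(1_t, p1_{n-t}, p^2 1_t, p 1_{n-t}) \Gamma_0^{(n)}(N) = \bigsqcup_v \Gamma_0^{(n)}(N) \alpha_v$, all of upper block-triangular form $\alpha_v = \left(\begin{smallmatrix} A & B \\ 0 & D \end{smallmatrix}\right)$ with ${}^t A D = p^2 1_n$. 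Here $A$ runs over a transversal for the action of $\GL_n(\Z)$ on integer matrices with elementary divisors determined by $t$, and for each such $A$ the block $B$ ranges over a set of symmetric matrices modulo a lattice depending on $A$.

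First I would substitute these representatives into the Fourier expansion (\ref{eqn:lattice-fourier-expansion}) of $F$. Because $C=0$, the transformation reduces to $\alpha_v\langle Z\rangle = A Z D^{-1} + B D^{-1}$, and the $B$-sum imposes a congruence on the Fourier index $T$ (forcing certain entries of ${}^t A T A$ to lie in $p^2\Z$ or $p\Z$), while the $A$-action reindexes the expansion by $T \mapsto {}^t A T A / p^2$ (using ${}^t A = p^2 D^{-1}$). Reorganising, the coefficient $a(\Lambda;F|T^{(n)}_t(p^2,\chi))$ becomes a sum over sublattices $\Omega$ with $p\Lambda \subset \Omega \subset p^{-1}\Lambda$ whose $p$-type in the sense of Definition \ref{dfn:p-type} is pinned down by the elementary divisors of $A$. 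The many powers of $p$ arising from $\det(D)^{-k}$, the prefactor $\mu(\alpha_v)^{nk/2 - n(n+1)/2}$ of (\ref{eqn:hecke-op-normalization}), and the cardinality of the $B$-sum then combine with the character contribution $\overline{\chi}(\det A)$ to assemble an exponent and character factor associated to each $(\Omega,\Lambda,t)$.

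The final step is to apply the averaging weights ${n-t \choose j-t}_p$ of (\ref{eqn:tilde-T-definition}). For a fixed sublattice $\Omega$ of $p$-type $(m_0, m_1, m_2)$, only certain values of $t$ contribute to the coefficient of $a(\Omega; F)$, and I expect the Gaussian binomial weighting to collapse this weighted sum into $\alpha_j(\Omega,\Lambda)$ times a single clean exponent $p^{E_j(\Omega,\Lambda)}$ and a single clean character factor. The main obstacle is precisely this combinatorial telescoping: showing that the weighted sum of sublattice contributions of various $p$-types (each counted by a product of Gaussian binomials parameterising the choice of $A$ and of the compatible direct summand decomposition of $\Lambda$) collapses into the count of totally isotropic subspaces of $\Lambda_1/p\Lambda_1$ of codimension $n - j$. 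This reduction rests on standard Gaussian binomial identities counting (isotropic) flags in $\mathbf{F}_p$-vector spaces, together with careful bookkeeping of the $p$-exponent contributions from the four sources above so that they assemble into the stated $E_j(\Omega, \Lambda)$.
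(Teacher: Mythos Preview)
The paper does not give a proof of this theorem: it is stated with attribution to Hafner--Walling \cite{HafnerWalling2002} and used as a black box. There is therefore nothing in the paper to compare your proposal against.

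That said, your outline is essentially the approach of \cite{HafnerWalling2002}: explicit upper-triangular coset representatives, computation of the $B$-sum as a congruence condition, reindexing by sublattices $\Omega$ via the elementary divisors of $A$, and then a combinatorial collapse after applying the averaging weights of (\ref{eqn:tilde-T-definition}). Your identification of the main difficulty is accurate: the nontrivial content is precisely the Gaussian-binomial identity that converts the weighted sum over $t$ into the count $\alpha_j(\Omega,\Lambda)$ of totally isotropic subspaces of $\Lambda_1/p\Lambda_1$ of the correct codimension, together with the bookkeeping that assembles the various powers of $p$ into $E_j(\Omega,\Lambda)$. If you want to write this out in full you should consult \cite{HafnerWalling2002} directly, since the present paper provides no details.
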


\noindent Let us finally record two simple results that we will frequently use.  The first is that the well-known fact that the reduction modulo $p$ map $\SL_n(\Z) \to \SL_n(\Z/p\Z)$ is surjective.  The second is the following simple corollary:

\begin{lemma}\label{trivial-lifting-lemma}  Let $\overline{G} = \left(\begin{smallmatrix} \overline{H} & 0 \\ \overline{B} & \overline{I}_m\end{smallmatrix}\right) \in \SL_n(\Z/p\Z)$, where $\overline{H} \in \SL_{n-m}(\Z/p\Z)$.  Let $H \in \SL_{n-m}(\Z)$ such that $H \bmod p = \overline{H}$.  Then we can take the lift $G\in \SL_n(\Z)$ of $\overline{G}$ to be of the form $\left(\begin{smallmatrix} H & 0 \\ B & I_m\end{smallmatrix}\right)$.  \end{lemma}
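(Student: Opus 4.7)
The plan is essentially to observe that the candidate lift can simply be written down: the block-triangular form forces the determinant to be correct for free, so no adjustment is needed.

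First, I would take any entry-wise integer lift $B \in \Z^{m \times (n-m)}$ of $\overline{B}$, i.e.\ choose representatives in $\Z$ for each entry of $\overline{B} \in (\Z/p\Z)^{m \times (n-m)}$. I would then simply set
\[G = \begin{pmatrix} H & 0 \\ B & I_m \end{pmatrix} \in \Z^{n \times n}.\]

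The key step is then the determinant computation, which is immediate from the block-lower-triangular shape: $\det(G) = \det(H) \cdot \det(I_m) = \det(H) = 1$ since $H \in \SL_{n-m}(\Z)$ by hypothesis. Hence $G \in \SL_n(\Z)$. Reducing modulo $p$, block by block, gives $G \bmod p = \begin{pmatrix} H \bmod p & 0 \\ B \bmod p & I_m \bmod p \end{pmatrix} = \begin{pmatrix} \overline{H} & 0 \\ \overline{B} & \overline{I}_m \end{pmatrix} = \overline{G}$, as required.

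There is really no obstacle here; the point of the lemma is that because the bottom-right block is already the identity, one does not need to invoke the surjectivity of $\SL_n(\Z) \to \SL_n(\Z/p\Z)$ (which would only produce \emph{some} lift, not necessarily one of the desired shape) and one does not need to correct the determinant of the lift by modifying $B$ or $I_m$. The block-triangular structure makes the determinant of the lift depend only on $\det(H)$, which is already $1$ by assumption, so any entry-wise lift of $\overline{B}$ does the job.
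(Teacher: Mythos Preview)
Your proof is correct and is essentially identical to the paper's own argument: take any integer lift $B$ of $\overline{B}$, form $G = \left(\begin{smallmatrix} H & 0 \\ B & I_m\end{smallmatrix}\right)$, and observe that the block-lower-triangular shape gives $\det(G) = \det(H) = 1$, so $G \in \SL_n(\Z)$ and reduces to $\overline{G}$. Your write-up is simply more explicit about the determinant computation than the paper's one-line proof.
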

\begin{proof}  Let $B$ be any lift of $\overline{B}$, and consider $G = \left(\begin{smallmatrix} H & 0 \\ B & I_m\end{smallmatrix}\right)$.  Then $G \in \SL_n(\Z)$, and $G \bmod p = \overline{G}$.  \end{proof}

\section{The intertwining relations for $\Phi$ and $T^{(n)}_j(p^2)$}\label{sctn:first-relations}

For this section fix $n$ a positive integer and $1 \leq j \leq n$, and for ease of notation drop the character from the Hecke operator notation, so that $\widetilde{T}_j^{(n)}(p^2) = \widetilde{T}_j^{(n)}(p^2, \chi)$.  Let 
\[F(Z) = \sum_\Lambda a(\Lambda; F) e\{\Lambda Z\} \in \mathcal{M}_k^{(n)}(N, \chi).\]  
Applying the Hecke operator $\widetilde{T}_j^{(n)}(p^2)$ then the Siegel lowering operator $\Phi$ we obtain
\begin{equation}\label{Tjp2-then-phi}\Phi(F | \widetilde{T}_j^{(n)}(p^2))(Z') = \sum_{\Lambda'} \sum_{p\Lambda \subset \Omega \subset \frac{1}{p}\Lambda} A(\Omega, \Lambda; F|\widetilde{T}^{(n)}_j(p^2))e\{\Lambda' Z'\}\end{equation}
where $\Lambda$ varies over all rank $n$ lattices of the form $\Lambda' \oplus \Z x_n$, endowed with bilinear form $B$ obtained by extended the bilinear form $B'$ of $\Lambda'$ by the rule $B(x_n, y) = 0$ for all $y \in \Lambda$.  On the other hand, if we apply $\Phi$ first then $\widetilde{T}_j^{(n-1)}(p^2)$ (where we are now assuming $j \leq n-1$ as well) we obtain
\begin{equation}\label{phi-then-Tjp2}(\Phi(F) | \widetilde{T}_j^{(n-1)}(p^2))(Z') = \sum_{\Lambda'} \sum_{p\Lambda' \subset \Omega' \subset \frac{1}{p}\Lambda'} A(\Omega', \Lambda'; \Phi(F)|\widetilde{T}^{(n-1)}_j(p^2)) e\{\Lambda' Z'\}.\end{equation}
Proposition \ref{prop:tilde-intertwining-relations} in the sequel is an intertwining relation for the operators $\Phi$ and $\widetilde{T}_j^{(n)}(p^2)$.  We will prove this by comparing Fourier coefficients in (\ref{Tjp2-then-phi}) and (\ref{phi-then-Tjp2}).  We therefore fix a single lattice $\Lambda'$ of rank $n-1$ endowed with a bilinear form $B'$.  We write $\Lambda$ for the lattice $\Lambda' \oplus \Z x_n$ which is endowed with the bilinear form $B$ extending $B'$ as above.  A preliminary step in comparing the Fourier coefficients at $\Lambda'$ in (\ref{Tjp2-then-phi}) and (\ref{phi-then-Tjp2}) is to know which lattices $p\Lambda \subset \Omega \subset \frac{1}{p} \Lambda$ project on to a given $p\Lambda' \subset \Omega' \subset \frac{1}{p} \Lambda'$.  This is the content of Lemmas \ref{lattice-parameterisations} and \ref{lattice-counts}:

\begin{lemma}\label{lattice-parameterisations}  There is a one-to-one correspondence between: \begin{itemize}
\item  lattices $\Omega$ such that $p\Lambda \subset \Omega \subset \frac{1}{p}\Lambda$ with $p$-type $(t, s-t, n-s)$,
\item  the following data: \begin{itemize} \item  an $s$-dimensional subspace $\overline{\Delta_1}$ of $\Lambda/p\Lambda$.  Let $\Delta_1$ be the preimage of this in $\Lambda$,
\item  a $t$-dimensional subspace $\overline{\Delta_2}$ of $\Delta_1/p\Delta_1$, linearly independent of the subspace $\overline{p\Lambda}$ of $\Delta_1/p\Delta_1$. \end{itemize} \end{itemize} \end{lemma}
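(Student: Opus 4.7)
The plan is to give explicit, mutually inverse maps between the two collections, using the invariant-factor splitting from Definition \ref{dfn:p-type} in one direction and the ``divide by $p$'' construction $\Omega = \tfrac{1}{p}\Delta_2$ in the other.

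Starting with $\Omega$ of $p$-type $(t, s-t, n-s)$, I would fix a splitting $\Lambda = \Lambda_0 \oplus \Lambda_1 \oplus \Lambda_2$ with ranks $t, s-t, n-s$ and $\Omega = p^{-1}\Lambda_0 \oplus \Lambda_1 \oplus p\Lambda_2$, then set $\Delta_1 := \Omega \cap \Lambda$ and let $\overline{\Delta_2}$ be the image of $p\Omega$ in $\Delta_1/p\Delta_1$.  A direct computation using the splitting identifies $\Delta_1 = \Lambda_0 \oplus \Lambda_1 \oplus p\Lambda_2$, so $\overline{\Delta_1}$ has dimension $t + (s-t) = s$; and the same splitting identifies $\overline{\Delta_2} \cong \Lambda_0/p\Lambda_0$ and $\overline{p\Lambda} \cong p\Lambda_2/p^2\Lambda_2$ as subspaces of $\Delta_1/p\Delta_1$ supported on complementary direct summands.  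Thus $\overline{\Delta_2}$ has dimension $t$ and is linearly independent of $\overline{p\Lambda}$.  Since $\Delta_1$ and $p\Omega$ are defined intrinsically from $\Omega$, the output is independent of the chosen splitting.

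For the reverse direction, given admissible data $(\overline{\Delta_1}, \overline{\Delta_2})$, let $\Delta_1 \subset \Lambda$ and $\Delta_2 \subset \Delta_1$ be the respective preimages, and set $\Omega := \tfrac{1}{p}\Delta_2$.  The inclusions $p^2\Lambda \subset p\Delta_1 \subset \Delta_2 \subset \Lambda$ immediately give $p\Lambda \subset \Omega \subset \tfrac{1}{p}\Lambda$.  The key step --- and, in my view, the main obstacle --- is showing $\Omega \cap \Lambda = \Delta_1$: one computes $\Omega \cap \Lambda = \tfrac{1}{p}(\Delta_2 \cap p\Lambda)$, and then observes that the image of $\Delta_2 \cap p\Lambda$ in $\Delta_1/p\Delta_1$ is $\overline{\Delta_2} \cap \overline{p\Lambda} = 0$ by the linear-independence hypothesis, so $\Delta_2 \cap p\Lambda = p\Delta_1$.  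The $p$-type of $\Omega$ is then forced to be $(t, s-t, n-s)$ by the index computations $[\Omega : \Delta_1] = [\Delta_2 : p\Delta_1] = p^t$, $[\Delta_1 : p\Lambda] = p^s$, and $[\Lambda : \Delta_1] = p^{n-s}$, compared with the generic identities $[\Omega : \Omega \cap \Lambda] = p^a$ and $[\Lambda : \Omega \cap \Lambda] = p^c$ for a lattice of $p$-type $(a, b, c)$.

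Finally, the two maps are inverse to each other: on one side $\tfrac{1}{p}(p\Omega) = \Omega$ by construction, while on the other the forward map applied to $\Omega := \tfrac{1}{p}\Delta_2$ returns the pair we began with, since $\Omega \cap \Lambda = \Delta_1$ recovers $\overline{\Delta_1}$ and $p\Omega = \Delta_2$ recovers $\overline{\Delta_2}$.
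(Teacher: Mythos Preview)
Your proof is correct, and your forward map $\Omega \mapsto (\Delta_1, \overline{\Delta_2}) = (\Omega \cap \Lambda,\, p\Omega \bmod p\Delta_1)$ is exactly the paper's.  The difference is in the reverse direction: the paper does not write down the slick formula $\Omega = \tfrac{1}{p}\Delta_2$, but instead chooses a basis $(\overline{y_1},\ldots,\overline{y_s})$ of $\overline{\Delta_1}$, extends and lifts it to a basis of $\Lambda$ via surjectivity of $\SL_n(\Z) \to \SL_n(\Z/p\Z)$, then repeats this for $\overline{\Delta_2}$ inside $\Delta_1/p\Delta_1$ (using Lemma~\ref{trivial-lifting-lemma} to control the shape of the lifting matrix), arriving at an explicit basis $(z_1,\ldots,z_s,y_{s+1},\ldots,y_n)$ of $\Lambda$ in which $\Omega$ is visibly diagonal.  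Your index computation $\Delta_2 \cap p\Lambda = p\Delta_1$ replaces all of this and is cleaner for the lemma as stated.  The trade-off is that the paper's explicit bases are not wasted effort: they are reused wholesale in the proof of Lemma~\ref{lattice-counts}, where one must track how $\Omega$ sits relative to the distinguished vector $x_n$, and the case analysis there is organised around exactly these $y_i$ and $z_i$.  So your argument proves the bijection more efficiently, but if you intend to continue to Lemma~\ref{lattice-counts} you will likely have to introduce adapted bases at that point anyway.
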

\begin{proof}  Suppose we are given $\Omega$ with $p\Lambda \subset \Omega \subset p^{-1}\Lambda$ and $p$-type $(t, s-t, n-s)$.  By the invariant factor theorem we can write 
\[\begin{aligned} \Lambda &= \Lambda_0 \oplus \Lambda_1 \oplus \Lambda_2, \\
\Omega &= \frac{1}{p}\Lambda_0 \oplus \Lambda_1 \oplus p\Lambda_2,\end{aligned}\] 
where $\rk(\Lambda_0) = t$, $\rk(\Lambda_1) = s-t$, $\rk(\Lambda_2)=n-s$.  Let $\Delta_1 = \Lambda \cap \Omega = \Lambda_0 \oplus \Lambda_1 \oplus p\Lambda_2$.  Then $\overline{\Delta_1} = \Delta_1 + p\Lambda \subset \Lambda/p\Lambda$ has dimension $s$.  Also, $p\Omega \subset \Delta_1$, and $\overline{\Delta_2} = p\Omega + p\Delta_1 \subset \Delta_1/p\Delta_1$ has dimension $t$, and is linearly independent of $\overline{p\Lambda} \subset \Delta_1/p\Delta_1$. \\

\noindent Conversely, suppose we pick a subspace $\overline{\Delta_1} \subset \Lambda/p\Lambda$ of dimension $s$; let $\Delta_1$ be its preimage in $\Lambda$.  Pick a basis $(\overline{y_1},...,\overline{y_s})$ for $\overline{\Delta_1}$ and extend to a basis $(\overline{y_1},...,\overline{y_n})$ of $\Lambda/p\Lambda$.  Note that $(\overline{x_1},...,\overline{x_n})$ is also a basis for $\Lambda/p\Lambda$, so there exists $\overline{G}_1 \in \GL_n(\Z/p\Z)$ such that $(\overline{y}_1,...,\overline{y_n}) = (\overline{x_1},...,\overline{x_n})\overline{G}_1$.  Replacing $\overline{y_1}$ by $\det(\overline{G}_1)^{-1}\overline{y_1}$ we may assume $\overline{G}_1 \in \SL_n(\Z/p\Z)$.  Since the projection map $\SL_n(\Z) \to \SL_n(\Z/p\Z)$ is surjective, we can pick $G_1 \in \SL_n(\Z)$ reducing modulo $p$ to $\overline{G}_1$.  Let $(y_1,...,y_n) = (x_1,...,x_n)G_1$, so $(y_1,...,y_n)$ is a basis for $\Lambda$ with $y_i$ reducing modulo $p$ to $\overline{y_i}$ and now
\begin{equation}\label{eqn:delta1-basis} \Delta_1 = \Z y_1 \oplus ... \oplus \Z y_s \oplus \Z py_{s+1} \oplus ... \oplus \Z py_n.\end{equation}
Note that, in $\Delta_1/p\Delta_1$, $\overline{p\Lambda} = p\Lambda + p\Delta_1$ has basis $(\overline{py_{s+1}},...,\overline{py_n})$.  Now pick a subspace $\overline{\Delta_2} \subset \Delta_1/p\Delta_1$ linearly independent of $\overline{p\Lambda}$.  Let $(\overline{z_1},...,\overline{z_t})$ be a basis for $\overline{\Delta_2}$.  Since $\{\overline{z_1},...,\overline{z_t}, \overline{py_{s+1}}, ..., \overline{py_n}\}$ is linearly independent, we can extend it to a basis $(\overline{z_1},...\overline{z_s}, \overline{py_{s+1}}, ..., \overline{py_n})$ for $\Delta_1/p\Delta$.  For future reference, call this extension step (*).  From (\ref{eqn:delta1-basis}) we have that $(\overline{y_1},...,\overline{y_s}, \overline{py_{s+1}}, ..., \overline{py_n})$ is a basis for $\Delta_1/p\Delta_1$.  So, modifiyng $\overline{z_1}$ if necessary as above, there is $\overline{G_2} \in \SL_n(\Z/p\Z)$ such that $(\overline{z_1},...\overline{z_s}, \overline{py_{s+1}}, ..., \overline{py_n}) = (\overline{y_1},...,\overline{y_s}, \overline{py_{s+1}}, ..., \overline{py_n})\overline{G_2}$.  In fact, we see $\overline{G_2} = \left(\begin{smallmatrix} \overline{H} & 0 \\ \overline{B} & I_{n-s} \end{smallmatrix}\right)$ for some $\overline{H} \in \SL_s(\Z/p\Z)$.  Pick a lift $H \in \SL_s(\Z)$ of $\overline{H}$.  Using Lemma \ref{trivial-lifting-lemma}, choose a lift $G_2 \in \SL_n(\Z)$ of $\overline{G_2}$ of the form $\left(\begin{smallmatrix} H & 0 \\ B & I \end{smallmatrix}\right)$.  Let $(z_1,...,z_s, py_{s+1},...,py_n) = (y_1,...,y_s, py_{s+1},...,py_n)G_2$; thus $(z_1,...,z_s, py_{s+1},...,py_n)$ is a basis for $\Delta_1$, the $z_i$ reduce modulo $p\Delta_1$ to $\overline{z_i}$, and the preimage of $\overline{\Delta_2}$ in $\Delta_1$ is
\[\Z z_1 \oplus ... \oplus \Z z_t \oplus \Z pz_{t+1} \oplus ... \oplus \Z py_s \oplus \Z p^2y_{s+1} \oplus ... \oplus \Z p^2y_n.\]
Recall $G_2 = \left(\begin{smallmatrix} H & 0 \\ B & I \end{smallmatrix}\right)$.  Then $G_2' = \left(\begin{smallmatrix} H & 0 \\ pB & I \end{smallmatrix}\right) \in \SL_n(\Z)$ as well, and we have 
\[(z_1,...,z_s, y_{s+1},...,y_n) = (x_1,...,x_n)G_1G_2'.\]  
Thus $(z_1,...,z_s, y_{s+1},...,y_n)$ is a basis for $\Lambda$, and we can consider the lattice
\[\Omega = \Z\left(\frac{1}{p}z_1\right) \oplus ... \oplus \Z\left(\frac{1}{p}z_t\right) \oplus \Z z_{t+1} \oplus ... \oplus \Z z_s \oplus \Z py_{s+1} \oplus ... \oplus \Z py_n.\]
Note that this construction is independent of the choice of $(\overline{y_1},...,\overline{y_n})$ and $(\overline{z_1},...,\overline{z_t})$.\\

\noindent We have therefore constructed maps between the two pieces of data, and they are easily seen to be inverse to each other.  \end{proof}

\begin{corollary}\label{counting-corollary}  The number of lattices $\Omega$ with $p\Lambda \subset \Omega \subset p^{-1}\Lambda$ and $p$-type $(t, s-t, n-s)$ is $\left(\frac{n}{s}\right)_p\left(\frac{s}{t}\right)_p p^{t(n-s)}$.  \end{corollary}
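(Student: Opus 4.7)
The plan is to apply Lemma \ref{lattice-parameterisations} and reduce the problem to counting two pieces of data over $\mathbb{F}_p$. Writing $V = \Lambda/p\Lambda$, which is an $n$-dimensional $\mathbb{F}_p$-vector space, the first piece of data is an $s$-dimensional subspace $\overline{\Delta_1} \subset V$, and the number of these is $\left(\frac{n}{s}\right)_p$ by definition of the Gaussian binomial coefficient. So it remains to show that, for each fixed $\overline{\Delta_1}$, the number of choices for $\overline{\Delta_2}$ is $\left(\frac{s}{t}\right)_p p^{t(n-s)}$.

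Fixing $\overline{\Delta_1}$, let $W = \Delta_1/p\Delta_1$, which is an $n$-dimensional $\mathbb{F}_p$-vector space, and let $U = \overline{p\Lambda} \subset W$, which has codimension $s$ (as remarked in the proof of Lemma \ref{lattice-parameterisations}). The data of $\overline{\Delta_2}$ is a $t$-dimensional subspace of $W$ whose intersection with $U$ is zero. The plan is to count these by first projecting to $W/U$ (which has dimension $s$) and then counting the fibres of this projection.

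Any $t$-dimensional subspace $\overline{\Delta_2} \subset W$ with $\overline{\Delta_2} \cap U = 0$ projects isomorphically onto a $t$-dimensional subspace of $W/U$, of which there are $\left(\frac{s}{t}\right)_p$. Conversely, fixing a $t$-dimensional $\overline{X} \subset W/U$ and letting $\widetilde{X}$ be its preimage in $W$ (of dimension $t + n - s$), the $t$-dimensional subspaces of $W$ projecting to $\overline{X}$ and meeting $U$ trivially are precisely the complements of $U$ in $\widetilde{X}$. Each such complement is the image of a section of the surjection $\widetilde{X} \twoheadrightarrow \overline{X}$ with kernel $U$, and the set of such sections is a torsor under $\mathrm{Hom}_{\mathbb{F}_p}(\overline{X}, U)$, of cardinality $p^{t(n-s)}$. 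Multiplying the three counts yields the claimed formula.

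There is no real obstacle here; the only subtlety is the correct interpretation of ``linearly independent'' in Lemma \ref{lattice-parameterisations} as $\overline{\Delta_2} \cap \overline{p\Lambda} = 0$, after which the count is standard.
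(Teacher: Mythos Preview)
Your proof is correct and follows essentially the same approach as the paper: invoke Lemma~\ref{lattice-parameterisations} and count the two pieces of data separately. The paper's own proof simply asserts that the number of $t$-dimensional subspaces of $\Delta_1/p\Delta_1$ linearly independent of $\overline{p\Lambda}$ is $\left(\frac{s}{t}\right)_p p^{t(n-s)}$, whereas you supply the standard projection-and-fibre argument justifying this count; otherwise the arguments coincide.
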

\begin{proof}  $\left(\frac{n}{s}\right)_p$ counts the number of $s$-dimensional subspaces of $\Lambda/p\Lambda$, and $\left(\frac{s}{t}\right)_p p^{t(n-s)}$ counts the number of $t$-dimensional subspaces of $\Delta_1/p\Delta_1$ linearly independent of $\overline{p\Lambda} \subset \Delta_1/p\Delta_1$.  \end{proof}

\begin{lemma}\label{lattice-counts}   Let $\Omega'$ be a lattice with $p\Lambda' \subset \Omega' \subset p^{-1}\Lambda'$ and $p$-type $(l, r-l, n-r-1)$.  Recall that $\Lambda = \Lambda' \oplus \Z x_n$.  Then under the projection $\Lambda \to \Lambda'$ the lattices $\Omega$ with $p\Lambda \subset \Omega \subset \frac{1}{p}\Lambda$ that project on to $\Omega'$ are classified as follows: \renewcommand{\labelenumi}{(\Alph{enumi})}\begin{enumerate}
\item  one lattice with $p$-type $(l+1, r-l, n-r-1)$, which (following the proof) we will denote $\Omega^{(1)}$.
\item  $p^l$ lattices with $p$-type $(l, r-l+1, n-r-1)$, which we will denote $\Omega^{(2)}((\overline{\alpha_i})_{1 \leq i \leq l})$ where $\overline{\alpha_i} \in \Z/p\Z$ for $1 \leq i \leq l$.
\item  $p^{l+r}$ lattices with $p$-type $(l, r-l, n-r)$, which we will denote $\Omega^{(3)}((\overline{\alpha_i})_{1 \leq i \leq r})$ where $\overline{\alpha_i} \in \Z/p^2\Z$ for $1 \leq i \leq l$ and $\overline{\alpha_i} \in \Z/p\Z$ for $l+1 \leq i \leq r$. 
\item  for each of the $p^{r-l}-1$ non-zero vectors $\overline{u'} \in \Lambda'_1/p\Lambda'_1$, $p^l$ lattices with $p$-type $(l+1, r-l-1, n-r)$, which we will denote $\Omega^{(4)}(\overline{u'}, (\overline{\gamma_i})_{1 \leq i \leq l})$.  \end{enumerate}
Moreover, let $\Omega$ be such a lattice projecting on to $\Omega'$.  Write
\begin{equation}  \Lambda' = \Lambda_0' \oplus \Lambda_1' \oplus \Lambda_2',\:\text{and } \Omega' = \frac{1}{p}\Lambda_0' \oplus \Lambda_1' \oplus p \Lambda_2',\end{equation} 
\begin{equation}\label{eqn:upstairs-lattice-IFT}  \Lambda = \Lambda_0 \oplus \Lambda_1 \oplus \Lambda_2\,\:\text{and } \Omega = \frac{1}{p}\Lambda_0 \oplus \Lambda_1 \oplus p\Lambda_2. \end{equation}
Then we have the following characterisation of $\Lambda_1/p\Lambda_1$ in each case:\renewcommand{\labelenumi}{(\Alph{enumi})}\begin{enumerate}
\item  For $\Omega = \Omega^{(1)}$, $\Lambda_1/p\Lambda_1 = \Lambda'_1/p\Lambda'_1$.
\item  For any $\Omega = \Omega^{(2)}((\overline{\alpha_i})_{1 \leq i \leq l})$, $\Lambda_1/p\Lambda_1 = \Lambda'_1/p\Lambda'_1 \oplus (\Z/p\Z) \overline{x_n}$.
\item  For any $\Omega = \Omega^{(3)}((\overline{\alpha_i})_{1 \leq i \leq r})$, $\Lambda_1/p\Lambda_1 = \Lambda'_1/p\Lambda'_1$.
\item  For $\Omega = \Omega^{(4)}(\overline{u'}, (\overline{\gamma_i})_{1 \leq i \leq l})$, $\Lambda_1/p\Lambda$ is a codimension one subspace of $\Lambda'/p\Lambda'$ which does not contain $\overline{u'}$.
\end{enumerate}\end{lemma}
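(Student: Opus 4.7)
The plan is to parameterise each lattice $\Omega$ projecting to $\Omega'$ via its intersection with the kernel of $\pi: p^{-1}\Lambda \to p^{-1}\Lambda'$, followed by a choice of section of $\pi|_\Omega$. Set $K := p^{-1}\Z x_n = \ker \pi$ and $K_\Omega := \Omega \cap K$; since $p\Z x_n \subseteq K_\Omega \subseteq K$, there are three possibilities for $K_\Omega$, which will correspond (after finer subdivision) to the four cases of the lemma. For a fixed $K_\Omega$, any $\Omega$ with $\pi(\Omega) = \Omega'$ arises from a homomorphism $\sigma : \Omega' \to p^{-1}\Lambda$ lifting the identity on $\Omega'$; writing $\sigma = \iota + \psi$, where $\iota : \Omega' \hookrightarrow p^{-1}\Lambda$ is the standard inclusion obtained from $\Lambda = \Lambda' \oplus \Z x_n$, two choices $\sigma, \sigma'$ give the same $\Omega$ precisely when $\sigma - \sigma'$ lands in $K_\Omega$, and the condition $p\Lambda \subseteq \Omega$ becomes $\psi(p\Lambda') \subseteq K_\Omega$. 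Thus such $\Omega$ are in bijection with $\mathrm{Hom}(\Omega'/p\Lambda', K/K_\Omega)$.

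The invariant factor decomposition of $\Omega'$ yields $\Omega'/p\Lambda' \cong (\Z/p^2\Z)^l \oplus (\Z/p\Z)^{r-l}$, and counting homomorphisms into $K/K_\Omega$ in each case reproduces the stated lattice counts: one lattice for $K_\Omega = p^{-1}\Z x_n$ (case (A)), $p^{l+r}$ lattices for $K_\Omega = p\Z x_n$ (case (C)), and $p^r$ lattices for $K_\Omega = \Z x_n$. The last case I split according to whether the restriction $\psi_1 := \psi|_{\Lambda'_1/p\Lambda'_1} \to \Z/p\Z$ is zero (case (B), $p^l$ lattices) or nonzero (case (D), $p^l(p^{r-l}-1)$ lattices). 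In case (D), the nonzero functional $\psi_1$ is identified with the nonzero vector $\overline{u'}$ of the statement via the basis-dependent bijection inherited from Lemma \ref{lattice-parameterisations}.

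To determine the $p$-type and $\Lambda_1/p\Lambda_1$ in each case I write down explicit generators of $\Omega$ and perform an integral change of basis to put $\Omega$ into the standard form $p^{-1}\Lambda_0 \oplus \Lambda_1 \oplus p\Lambda_2$. In cases (A), (B), and (C) the decomposition is inherited from that of $\Omega'$, with $x_n$ joining $\Lambda_0$, $\Lambda_1$, or $\Lambda_2$ respectively; the lifting parameters $(\overline{\alpha_i})$, $(\overline{\gamma_i})$ can be absorbed by integral substitutions such as $e_i \mapsto e_i + \alpha_i x_n$ or $f_i \mapsto f_i + \gamma_i x_n$ without changing the invariant factor type, and the claimed descriptions of $\Lambda_1/p\Lambda_1$ follow by inspection.

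The main obstacle is case (D). There, given a lift $u' \in \Lambda'_1$ of $\overline{u'}$, normalised so that $\psi_1(\overline{u'}) = 1$, the element $u := pu' + x_n$ lies in $\Lambda$ and satisfies $p^{-1}u \in \Omega$; thus $u$ (rather than $x_n$) furnishes a new basis vector of $\Lambda$ contributing a direction to $\Lambda_0$. The price paid is that the $\overline{u'}$-direction of $\Lambda'_1/p\Lambda'_1$ is pushed into $\Lambda_2$, so $\Lambda_1/p\Lambda_1$ comes out as the hyperplane $\ker(\psi_1) \subseteq \Lambda'_1/p\Lambda'_1$, which by construction does not contain $\overline{u'}$. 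With the decomposition in hand, reading off $(\mathrm{rk}\,\Lambda_0, \mathrm{rk}\,\Lambda_1, \mathrm{rk}\,\Lambda_2) = (l+1, r-l-1, n-r)$ is then immediate.
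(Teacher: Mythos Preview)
Your argument is correct and takes a genuinely different route from the paper's. The paper proceeds by pushing the explicit parameterisation of Lemma~\ref{lattice-parameterisations} (lattices $\Omega$ $\leftrightarrow$ pairs of subspaces $(\overline{\Delta_1},\overline{\Delta_2})$) through a case analysis on whether $\overline{x_n}$ lies in $\overline{\Delta_1}$ and, if so, on the relative position of $\overline{x_n}$ with respect to $\overline{\Delta_2}$ and $\overline{p\Lambda}$; this produces explicit bases for each $\Omega$ and one reads off the projection, the $p$-type, and $\Lambda_1/p\Lambda_1$ from those bases. Your approach instead exploits the short exact sequence $0 \to K_\Omega \to \Omega \to \Omega' \to 0$ and its splitting, reducing the classification to the three possibilities for $K_\Omega = \Omega \cap p^{-1}\Z x_n$ together with a class in $\mathrm{Hom}(\Omega'/p\Lambda',\,K/K_\Omega)$; the counts $1$, $p^r$, $p^{l+r}$ then drop out immediately from the structure $\Omega'/p\Lambda' \cong (\Z/p^2\Z)^l \oplus (\Z/p\Z)^{r-l}$, and the further split of the $K_\Omega = \Z x_n$ case according to $\psi_1 = 0$ or $\psi_1 \neq 0$ recovers (B) versus (D). This is cleaner and more intrinsic than the paper's basis-chasing, and the $p$-type and $\Lambda_1/p\Lambda_1$ computations you sketch (absorbing the $\alpha_i$, $\beta_j$ by unitriangular changes of basis, and in case (D) replacing $x_n$ by $u = x_n + pu'$) are exactly right.

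One point worth flagging: your parameter in case (D) is the nonzero \emph{functional} $\psi_1 \in \mathrm{Hom}(\Lambda'_1/p\Lambda'_1, \Z/p\Z)$, whereas the paper's is the nonzero \emph{vector} $\overline{u'} \in \Lambda'_1/p\Lambda'_1$. These index sets have the same cardinality $p^{r-l}-1$, but the correspondence is not canonical; in particular, your $\Lambda_1/p\Lambda_1 = \ker\psi_1$ is a sharper statement than the paper's ``a codimension one subspace not containing $\overline{u'}$'', since in the paper's labelling many different $\overline{u'}$ (indeed all of $(\Lambda'_1/p\Lambda'_1)\setminus\ker\psi_1$) could serve as witnesses for the same hyperplane. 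This does not affect the downstream use in Proposition~\ref{prop:tilde-intertwining-relations}, where only the cardinalities and the isomorphism type of $\Lambda_1/p\Lambda_1$ as a quadratic space matter, but you should be aware that matching your $(\psi_0,\psi_1)$ labels to the paper's $(\overline{u'},(\overline{\gamma_i}))$ labels genuinely requires fixing a basis, as you note.
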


\begin{proof}  We follow the construction of Lemma \ref{lattice-parameterisations}.  First pick the subspace $\overline{\Delta_1}$, there are two possibilities:
\begin{enumerate}  \item  $\overline{x_n} \in \overline{\Delta_1}$.  We may assume $\overline{y_s} = \overline{x_n}$, and choosing our the lifting matrix $G_1$ with the aid of Lemma \ref{trivial-lifting-lemma}, we may also assume that $y_s = x_n$, so that
\[\Delta_1 = \Z y_1 \oplus ... \oplus \Z y_{s-1} \oplus \Z x_n \oplus \Z py_{s+1} \oplus ... \oplus \Z py_n.\]
Here each $y_i \in \Lambda$.  Recall that $\Lambda = \Lambda' \oplus \Z{x_n}$.  For $s+1 \leq i \leq n$ write $y_i = y_i' + \alpha_i x_n$ where $y_i' \in \Lambda'$.  Since $x_n \in \Delta_1$ we may assume $\alpha_i=0$ for $s+1 \leq i \leq n$ (we could also do this for $1 \leq i \leq s$, but it is convenient not to for now).  Thus $y_i = y_i' \in \Lambda'$ and we have
\[\Delta_1 = \Z y_1 \oplus ... \oplus \Z y_{s-1} \oplus \Z x_n \oplus \Z py_{s+1}' \oplus ... \oplus \Z py_n'.\]
We now pick $\overline{\Delta_2}$: \begin{enumerate}
\item  $\overline{x_n} \in \overline{\Delta_2}$.  We may assume $\overline{z_t} = \overline{x_n}$, and choosing our lifting matrix $G_2$ (or, more precisely, $H$) appropriately we may also assume that $z_t = x_n$.  This constructs the lattice
\[\begin{aligned} \Omega^{(1)} &= \Z\left(\frac{1}{p}z_1\right) \oplus ... \oplus \Z\left(\frac{1}{p}z_{t-1}\right) \oplus \Z\left(\frac{1}{p}x_n\right) \\
&\qquad \oplus \Z z_{t+1} \oplus ... \oplus \Z z_s \oplus \Z py_{s+1}' \oplus ... \oplus \Z py_n'.\end{aligned}\]
Since the $z_i$ are in $\Lambda$ we can write $z_i = z_i' + \alpha_i x_n$ where $z_i' \in \Lambda'$.  Since $(1/p)x_n \in \Omega^{(1)}$ we may assume all $\alpha_i=0$.  Thus our lattice is
\[\begin{aligned} \Omega^{(1)} &= \Z\left(\frac{1}{p}z_1'\right) \oplus ... \oplus \Z\left(\frac{1}{p}z_{t-1}'\right) \oplus \Z\left(\frac{1}{p}x_n\right) \\
&\qquad \oplus \Z z_{t+1}' \oplus ... \oplus \Z z_s' \oplus \Z py_{s+1}' \oplus ... \oplus \Z py_n' \end{aligned}\]
and this projects to
\[\begin{aligned}\Omega^{(1)}{}' &= \Z\left(\frac{1}{p}z_1'\right) \oplus ... \oplus \Z\left(\frac{1}{p}z_{t-1}'\right) \\
&\qquad \oplus \Z z_{t+1}' \oplus ... \oplus \Z z_s' \oplus \Z py_{s+1}' \oplus ... \oplus \Z py_n'.\end{aligned}\]
\item  $\overline{x_n} \notin \overline{\Delta_2}$.  Let $\overline{z_1},...,\overline{z_t}$ be a basis for $\overline{\Delta_2}$ and recall $\overline{py_{s+1}'},...,\overline{py_n'}$ is a basis for $p\Lambda \subset \Delta_1/p\Delta$ as in Lemma \ref{lattice-parameterisations}; and moreover that $\{\overline{z_1},...,\overline{z_t}, \overline{py_{s+1}'},...\overline{py_n'}\}$ is linearly independent.  There are two possibilities:
\begin{enumerate}
\item  $\{\overline{z_1},....\overline{z_t}, \overline{py_{s+1}'},...,\overline{py_n'}, \overline{x_n}\}$ is linearly independent.  So when we extend to a basis $(\overline{z_1},...,\overline{z_s}, \overline{py_{s+1}'},...,\overline{py_n}')$ for $\Delta_1/p\Delta_1$ at step (*) in the proof of Lemma \ref{lattice-counts}, we can include $\overline{x_n}$ in this extension, say $\overline{z_s} = \overline{x_n}$.  Choosing the lifting matrix $G_2$ appropriately we may assume $z_s = x_n$ as well.  Then we have the lattice
\[\begin{aligned} \Omega^{(2)}&= \Z\left(\frac{1}{p}z_1\right) \oplus ... \oplus \Z\left(\frac{1}{p}z_t\right) \oplus \Z z_{t+1} \oplus ... \oplus \Z z_{s-1} \\
&\qquad \oplus \Z x_n \oplus \Z py_{s+1}' \oplus ... \oplus \Z py_{n}'.\end{aligned}\]
Again write $z_i = z_i' + \alpha_i x_n$ where $z_i' \in \Lambda'$.  Since $x_n \in \Lambda'$ we may assume $\alpha_i = 0$ for $t+1 \leq i \leq s-1$, and $\alpha_i \in \{0,...,p-1\}$ for $1 \leq i \leq t$.  Hence our lattice is
\[\begin{aligned} \Omega^{(2)}((\alpha_i)_{1 \leq i \leq t}) &= \Z\left(\frac{1}{p}(z_1' + \alpha_1 x_n)\right) \oplus ... \oplus \Z\left(\frac{1}{p}(z_t' + \alpha_t x_n)\right) \\
&\qquad\oplus \Z z_{t+1}' \oplus ... \oplus \Z z_{s-1}' \oplus \Z x_n \\
&\qquad\oplus \Z py_{s+1}' \oplus ... \oplus \Z py_{n}'\end{aligned}\]
and, for any choice of $(\alpha_i)$, this projects to
\[\begin{aligned} \Omega^{(2)}{}' &= \Z\left(\frac{1}{p}z_1'\right) \oplus ... \oplus \Z\left(\frac{1}{p}z_t'\right) \oplus \Z z_{t+1}' \oplus ... \oplus \Z z_{s-1}' \\
&\qquad \oplus \Z py_{s+1}' \oplus ... \oplus \Z py_{n}'.\end{aligned}\]
\item  $\{\overline{z_1},....\overline{z_t}, \overline{py_{s+1}'},...,\overline{py_n'}, \overline{x_n}\}$ is linearly dependent, so we have a relation $\overline{x_n} = \sum a_i \overline{z_i} + \sum b_i \overline{py_i'}$.  If all the $a_i$ are $0$ then $\overline{x_n} \in \overline{p\Lambda}$ which is a contradiction; and if all the $b_i$ are $0$ then $\overline{x_n} \in \overline{\Delta_2}$ which is also a contradiction.  Modifying the basis $\{\overline{z_1},...,\overline{z_t}\}$ for $\overline{\Delta_2}$, we may therefore assume $\overline{x_n} = \overline{z_t} - \overline{pu'}$ for some non-zero $\overline{pu'} \in \bigoplus_{i=s+1}^n \mathbb{F}_p \overline{py_i}$, or $\overline{z_t} = \overline{x_n} + \overline{pu'}$.  Extend to a basis $(\overline{z_1},...,\overline{z_s}, \overline{py_{s+1}'},...,\overline{py_n'})$ for $\Delta_1/p\Delta$ as in step (*) of in the proof of Lemma \ref{lattice-parameterisations}.  Pick some lift $u'$ of $\overline{u'}$.  Recall that $(x_1,...,x_n)$ is our basis for $\Lambda$, and that $u' \in \Lambda'$ where $\Lambda = \Lambda' \oplus \Z x_n$, so $(x_1,...,x_{n-1}, x_n + pu')$ is also a basis for $\Lambda$.  Note that $(x_n + pu') + p\Delta_1 = \overline{z_t}$.  We can then choose a lifting matrix appropriately with respect to this basis to ensure that $z_t = x_n + pu'$ is a basis vector of $\Omega$, so that our lattice is
\[\begin{aligned} \Omega^{(4)}(u') &= \Z\left(\frac{1}{p}z_1\right) \oplus ... \oplus \Z\left(\frac{1}{p}z_{t-1}\right) \oplus \Z\left(\frac{1}{p}x_n + u'\right) \oplus \\
&\qquad \Z z_{t+1} \oplus ... \oplus \Z z_s \oplus \Z py_{s+1}' \oplus ... \oplus \Z py_n'.\end{aligned}\]
Write each $z_i = z_i' + \alpha_i x_n$ where $z_i' \in \Lambda'$.  Note that for $t+1 \leq i \leq s$ we have 
\[z_i' = (z_i' + \alpha_ix_n) - \alpha_i p\left(\frac{1}{p}x_n + y_n'\right) + \alpha_i py_n'\]
so we can assume $\alpha_i = 0$ for $t+1 \leq i \leq s$.  Similarly we may assume $\alpha_i \in \{0,...,p-1\}$ for $1 \leq i \leq t$.  Then our lattice is
\[\begin{aligned} &\Omega^{(4)}(u', (\alpha_i)_{1 \leq i \leq t-1}) \\
&\qquad= \Z\left(\frac{1}{p}(z_1' + \alpha_1 x_n)\right) \oplus ... \oplus \Z\left(\frac{1}{p}(z_{t-1}' + \alpha_{t-1} x_n)\right) \\
&\qquad\qquad \oplus \Z\left(\frac{1}{p}x_n + u'\right) \oplus \Z z_{t+1}' \oplus ... \oplus \Z z_s' \\
&\qquad\qquad \oplus \Z py_{s+1}' \oplus ... \oplus \Z py_n'\end{aligned}\] 
and, for any choice of $(\alpha_i)$, this projects to
\[\begin{aligned} \Omega^{(4)}(u')' &= \Z\left(\frac{1}{p}z_1'\right) + ... + \Z\left(\frac{1}{p}z_{t-1}'\right) + \Z u' \\
&\qquad + \Z z_{t+1}' + ... + \Z z_s' + \Z py_{s+1}' + ... + \Z py_{n}'. \end{aligned}\]\end{enumerate} \end{enumerate}

\item In contrast to 1. we now have $\overline{x_n} \notin \overline{\Delta_1}$.  Pick a basis $\{\overline{y_1},...,\overline{y_s}\}$ for $\overline{\Delta_1}$.  When we extend to a basis for $\Lambda/p\Lambda$ we may assume $\overline{x_n}$ is included in that extension, say $\overline{y_n} = \overline{x_n}$.  Choosing our lifting matrix $G_2$ with the aid of Lemma \ref{trivial-lifting-lemma} we may assume $y_n = x_n$.  Follow through the rest of the construction as in Lemma \ref{lattice-counts}, we construct the lattice
\[\begin{aligned}\Omega^{(3)} &= \Z\left(\frac{1}{p}z_1\right) \oplus ... \oplus \Z\left(\frac{1}{p}z_t\right) \oplus \Z z_{t+1} \oplus ... \oplus \Z z_s \\
&\qquad\oplus \Z py_{s+1} \oplus ... \oplus \Z py_{n-1} \oplus \Z px_n.\end{aligned}\]
Write each $z_i = z_i' + \alpha_i x_n$, $y_i = y_i' = \alpha_i x_n$ where $x_i', y_i' \in \Lambda'$.  Since $px_n \in \Omega^{(3)}$, we may assume $\alpha_i = 0$ for $i \geq s+1$, $\alpha_i = \{0,...,p-1\}$ for $t+1 \leq i \leq s$ and $\alpha_i \in \{0,...,p^2-1\}$ for $1 \leq i \leq t$.  Then we have
\[\begin{aligned} \Omega^{(3)}((\alpha_i)_{1 \leq i \leq s}) &= \Z\left(\frac{1}{p}(z_1' + \alpha_1 x_n)\right) \oplus ... \oplus \Z\left(\frac{1}{p}(z_t' + \alpha_t x_n)\right) \\
&\qquad \oplus \Z(z_{t+1}' + \alpha_{t+1} x_n) \oplus ... \oplus \Z(z_s' + \alpha_s x_n) \\
&\qquad \oplus \Z py_{s+1}' \oplus ... \oplus \Z py_{n-1}' \oplus \Z px_n\end{aligned}\]
and, for any choice of $(\alpha_i)$, this projects on to
\[\Omega^{(3)}{}' = \Z\left(\frac{1}{p}z_1'\right) \oplus ... \oplus \Z\left(\frac{1}{p}z_t'\right) \oplus \Z z_{t+1}' \oplus ... \oplus \Z z_s' \oplus \Z py_{s+1}' \oplus ... \oplus \Z py_{n-1}'.\] \end{enumerate} 

\noindent Now fix a lattice $\Omega'$ with $p$-type $(l, r-l, n-r-1)$.  We consider in the following cases how many lattices project on to $\Omega'$, what their $p$-types are, and the structure of their $\Lambda_1/p\Lambda_1$ part in (\ref{eqn:upstairs-lattice-IFT}):
\renewcommand{\labelenumi}{(\Alph{enumi})}\begin{enumerate}
\item  Consider case 1(a).  Here we see that, since $\Omega^{(1)}{}'$ has $p$-type $(l, r-l, n-r-1)$, $\Omega^{(1)}$ must have $p$-type $(l+1, r-l, n-r-1)$.  Also, $\Omega^{(1)}$ is uniquely determined by $\Omega^{(1)}{}'$.  Finally, by inspection we see that $\Lambda_1/p\Lambda_1 = \Lambda_1'/p\Lambda_1'$.  
\item  Consider case 1(b)(i).  Here we see that $\Omega^{(2)}((\alpha_i))$ must have $p$-type $(l, r-l+1, n-r-1)$, and there are $p^l$ lattices with the same projection $\Omega^{(2)}{}'$.  Moreover, $\Lambda_1/p\Lambda_1 = \Lambda'_1/p\Lambda'_1 \oplus (\Z/p\Z)\overline{x_n}$.
\item  Consider case 2.  Here we see that $\Omega^{(3)}((\alpha_i))$ must have $p$-type $(l, r-l, n-r)$, and there are $p^{r+l}$ lattices with the same projection $\Omega^{(3)}{}'$.  Moreover, $\Lambda_1/p\Lambda_1 = \Lambda'_1/p\Lambda'_1$.
\item  Consider case 1(b)(ii).  Since $\Omega^{(4)}(u')'$ has $p$-type $(l, r-l, n-r-1)$ we see that $\Omega^{(4)}(u', (\alpha_i))$ must have $p$-type $(l+1, r-l-1, n-r)$.  Also there are $p^l$ lattices with the same projection $\Omega^{(4)}(u')'$, and by inspection we see that for these lattices $\Lambda_1/p\Lambda_1$ is a codimension $1$ subspace of $\Lambda'_1/p\Lambda'_1$ which does not contain $\overline{u'}$.

We now describe some cases when different choices of the vector $u'$ give different lattices with the same projection.  Following this, we will prove that, after taking this in to account, we have constructed all lattices projecting on to $\Omega'$.  First note that $\Omega^{(4)}(u_1', (\alpha_i)) = \Omega^{(4)}(u_2', (\beta_i))$ if and only if $(\alpha_i) = (\beta_i)$ and $u_1' - u_2' \in p\Lambda'$.  Now fix a basis for the projection
\[\Omega' = \Z\left(\frac{1}{p}w_1'\right) \oplus ... \oplus \Z\left(\frac{1}{p}w_l'\right) \oplus \Z w_{l+1}' \oplus ... \oplus \Z w_r' \oplus \Z pw_{r+1}' \oplus ... \oplus \Z pw_{n-1}'.\]
Take $u' = a_1w_{l+1}' + ... + a_r w_r'$ to be any vector such that $u' \notin p\Lambda'$.  We easily see that, for any choice of $(\alpha_i)$, $\Omega^{(4)}(u', (\alpha_i))' = \Omega'$.  As $u'$ varies such that $u' + p\Lambda'$ covers all $p^{r-l}-1$ non-zero possibilities, we obtain $p^l(p^{r-l}-1)$ distinct lattices $\Omega^{(4)}(u', (\alpha_i))$ all projecting on to $\Omega'$.  
\end{enumerate}

We have now listed all possible rank $n$ lattices projecting on to $\Omega'$.  Note that these lattice are all distinct: indeed, the lattices within each case are distinct by construction, and there can be no equality between two lattices in different cases since the $p$-type of their projections are different.  This completes the proof.  \end{proof}

\begin{remark}  Let us demonstrate the consistency of the numbers from Lemma \ref{lattice-counts} by counting the number $M(t, s-t, n-s)$ of rank $n$ lattices with $p$-type $(t, s-t, n-s)$: on the one hand this is equal to $\left(\frac{n}{s}\right)_p \left(\frac{s}{t}\right)_p p^{t(n-s)}$, by Corollary \ref{counting-corollary}.  On the other hand, using Lemma \ref{lattice-counts}, it is equal to 

\[\begin{aligned} &M(t, s-t, n-s) \\
&\qquad = \left(\frac{n-1}{s-1}\right)_p\left(\frac{s-1}{t-1}\right)_p p^{(t-1)(n-s)} + p^t\left(\frac{n-1}{s-1}\right)_p \left(\frac{s-1}{t}\right)_p p^{t(n-s)} \\
&\qquad\qquad + p^{t+s}\left(\frac{n-1}{s}\right)_p \left(\frac{s}{t}\right)_p p^{t(n-s-1)} \\
&\qquad\qquad + (p^{s}-p^{t-1})\left(\frac{n-1}{s}\right)_p \left(\frac{s}{t-1}\right)_p p^{(t-1)(n-s-1)} \\
&\qquad= p^{t(n-s)}\left[ \left(\frac{n-1}{s-1}\right)_p \left(\frac{s-1}{t-1}\right)_p p^{-n+s} + \left(\frac{n-1}{s-1}\right)_p \left(\frac{s-1}{t}\right)_p p^t \right.\\
&\left.\qquad\qquad + \left(\frac{n-1}{s}\right)_p \left(\frac{s}{t}\right)_p p^s + \left(\frac{n-1}{s}\right)_p \left(\frac{s}{t-1}\right)_p p^{-n+s}(p^s - p^{t-1}) \right].\end{aligned}\]
It is then straightforward using the properties of the Gaussian binomial coefficient to prove that the right hand side is equal to $p^{t(n-s)} \left(\frac{n}{s}\right)_p \left(\frac{s}{t}\right)_p$.
\end{remark}

\begin{proposition}\label{prop:tilde-intertwining-relations}  Let $F \in \mathcal{M}_k(N, \chi)$, $1 \leq j \leq n$, and let $\Lambda$ be a $\Z$-lattice with a $\Z$-valued quadratic form.  Then
\[\begin{aligned} \Phi(F | \widetilde{T}^{(n)}_j(p^2)) &= \Phi(F)|\widetilde{T}_j^{(n-1)}(p^2) + \widetilde{c}_{j, j-1}^{(n-1)} \Phi(F) | \widetilde{T}_{j-1}^{(n-1)}(p^2) \\
&\qquad + \widetilde{c}_{j, j-2}^{(n-1)}\Phi(F) | \widetilde{T}_{j-2}^{(n-1)}(p^2)\end{aligned}\]
where
\[\begin{aligned} \widetilde{c}_{j, j-1}^{(n-1)} &= \chi(p^2)p^{2k-j-n} + \chi(p)p^{k-n} + p^{n-j}, \\
\widetilde{c}_{j, j-2}^{(n-1)} &= \chi(p^2)(p^{2k-2j+1} - p^{2k-n-j}).\end{aligned}\]
We adopt the convention that $\widetilde{T}_j^{(n-1)}(p^2)$ is the zero operator for $j \in \{n, -1, -2\}$.
\end{proposition}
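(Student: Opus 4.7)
The plan is to prove the proposition by comparing Fourier coefficients on both sides of the claimed identity, exactly in the spirit of Hafner--Walling's description of Hecke operators on Fourier expansions (Theorem~\ref{action-of-Tjp2-on-fourier-exp}). Fix a rank $n-1$ lattice $\Lambda'$ with bilinear form $B'$, and let $\Lambda = \Lambda' \oplus \Z x_n$ with $B(x_n, y) = 0$ for every $y \in \Lambda$; comparing Fourier coefficients at $\Lambda'$ on the two sides then reduces, by \eqref{Tjp2-then-phi} and \eqref{phi-then-Tjp2}, to showing an identity between the left-hand sum
\[ \sum_{p\Lambda \subset \Omega \subset \tfrac{1}{p}\Lambda} A(\Omega, \Lambda; F|\widetilde{T}_j^{(n)}(p^2)) \]
and an appropriate linear combination of the corresponding sums for $\Phi(F)|\widetilde{T}_i^{(n-1)}(p^2)$ over $p\Lambda' \subset \Omega' \subset \tfrac{1}{p}\Lambda'$, for $i \in \{j, j-1, j-2\}$.

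The key organisational device is to group each $\Omega$ on the left-hand side according to its image under the projection $\Lambda \to \Lambda'$. Lemma~\ref{lattice-counts} classifies these preimages for each fixed $\Omega'$ (with $p$-type $(l, r-l, n-r-1)$) into four families (A)--(D), giving their $p$-types in $\Lambda$, their counts, and an explicit characterisation of the associated $\Lambda_1/p\Lambda_1$. Two reductions then make this grouping useful. The first is that, in each of the four cases, the facts that $x_n$ is isotropic and orthogonal to $\Lambda'$ imply, after a straightforward inspection of the basis vectors listed in Lemma~\ref{lattice-counts}, that the quadratic form on $\Omega$ differs from the orthogonal direct sum of the form on $\Omega'$ with an isotropic piece only by a basis change; hence $a(\Omega; F) = a(\Omega'; \Phi(F))$ for every $\Omega$ in every case, and this common Fourier coefficient factors out of the inner sum.

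The second reduction is to rewrite $\alpha_j(\Omega, \Lambda)$ in terms of $\alpha_i(\Omega', \Lambda')$ for $i \in \{j, j-1, j-2\}$, using the description of $\Lambda_1/p\Lambda_1$ from Lemma~\ref{lattice-counts}. Cases (A) and (C) satisfy $\Lambda_1/p\Lambda_1 = \Lambda'_1/p\Lambda'_1$, so a dimension count gives $\alpha_j(\Omega, \Lambda) = \alpha_{j-1}(\Omega', \Lambda')$, feeding the $\widetilde{T}_{j-1}^{(n-1)}(p^2)$ term. In case (B), where $\Lambda_1/p\Lambda_1 = \Lambda'_1/p\Lambda'_1 \oplus (\Z/p\Z)\overline{x_n}$ with $\overline{x_n}$ isotropic and in the radical, one splits totally isotropic subspaces of the prescribed codimension according to whether they contain $\overline{x_n}$, parameterising those that do not as graphs of linear functionals on their projections, to obtain $\alpha_j(\Omega, \Lambda) = \alpha_{j-1}(\Omega', \Lambda') + p^{r-l-n+j+1}\alpha_j(\Omega', \Lambda')$; this is the unique source of a $\widetilde{T}_j^{(n-1)}(p^2)$ contribution. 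In case (D), $\Lambda_1/p\Lambda_1$ is a codimension-one subspace of $\Lambda'_1/p\Lambda'_1$ avoiding $\overline{u'}$, and summing $\alpha_j(\Omega, \Lambda)$ over all case-(D) lattices projecting to a given $\Omega'$ (by swapping the order of summation to first fix a totally isotropic subspace $W \subset \Lambda'_1/p\Lambda'_1$ of the relevant dimension and then count the case-(D) lattices whose $\Lambda_1/p\Lambda_1$ contains $W$) yields a multiple of $\alpha_{j-2}(\Omega', \Lambda')$.

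The remaining step is pure bookkeeping: one substitutes the formulas for $E_j(\Omega, \Lambda)$, the indices $[\Omega:p\Lambda] = p^{2m_0+m_1}$, the character factors, and the subspace counts above together with the multiplicities from Lemma~\ref{lattice-counts} into Theorem~\ref{action-of-Tjp2-on-fourier-exp}, and then collects the contributions according to which $\widetilde{T}_i^{(n-1)}(p^2)$ they feed. This bookkeeping is the main obstacle: the discrepancies between $E_j(\Omega, \Lambda)$ and $E_i(\Omega', \Lambda')$ arise from simultaneous changes in $p$-type, in $j$ versus $i$, and in $n$ versus $n-1$, and all of these must balance against the character factors to produce the stated simple form. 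Once the arithmetic has settled, the coefficient of $\widetilde{T}_j^{(n-1)}(p^2)$ collapses to $1$, coming entirely from the graph-of-functional portion of case (B); meanwhile the coefficients of $\widetilde{T}_{j-1}^{(n-1)}(p^2)$ and $\widetilde{T}_{j-2}^{(n-1)}(p^2)$ reduce, after application of standard Gaussian binomial identities in the case-(D) summation, to the stated $\widetilde{c}_{j,j-1}^{(n-1)}$ and $\widetilde{c}_{j,j-2}^{(n-1)}$ respectively.
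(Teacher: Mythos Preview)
Your proposal is correct and follows essentially the same route as the paper's own proof: both reduce to a Fourier-coefficient identity at a fixed $\Lambda'$, group the rank-$n$ lattices $\Omega$ by their projection to $\Omega'$ via Lemma~\ref{lattice-counts}, compute $\alpha_j(\Omega,\Lambda)$ case by case (with case~(B) supplying the $\widetilde{T}_j^{(n-1)}$ term, cases~(A), (B), (C) the $\widetilde{T}_{j-1}^{(n-1)}$ term, and case~(D) the $\widetilde{T}_{j-2}^{(n-1)}$ term via the swap-of-summation argument you describe), and then track the exponents $E_j$ and character factors. One small remark: no Gaussian binomial identities are actually needed in the case-(D) collection---the paper's argument there is a straight count of the nonzero $\overline{u'}$ missing a fixed totally isotropic $\overline{V}$, giving the factor $p^{n-j+1}-1$ directly.
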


\begin{proof}Continue with the fixed lattice $\Lambda'$, and the lattice $\Lambda = \Lambda' \oplus \Z x_n$ with the quadratic form extended as above.  It suffices to show that
\[\begin{aligned} \sum_{p\Lambda \subset \Omega \subset \frac{1}{p} \Lambda} A(\Omega, \Lambda; F|\widetilde{T}_j^{(n)}(p^2)) &= \sum_{p\Lambda' \subset \Omega' \subset \frac{1}{p}\Lambda'} A(\Omega', \Lambda'; \Phi(F)|\widetilde{T}_j^{(n-1)}(p^2)) \\
&\qquad+ \widetilde{c}_{j, j-1}^{(n-1)}\sum_{p\Lambda' \subset \Omega' \subset \frac{1}{p}\Lambda'} A(\Omega', \Lambda'; \Phi(F)|\widetilde{T}^{(n-1)}_{j-1}(p^2)) \\
&\qquad + \widetilde{c}_{j, j-2}^{(n-1)} \sum_{p\Lambda' \subset \Omega' \subset \frac{1}{p}\Lambda'} A(\Omega', \Lambda'; \Phi(F)|\widetilde{T}_{j-2}^{(n-1)}(p^2))\end{aligned}\]
Write $\pi$ for map of Lemma \ref{lattice-counts} (i.e. the projection $x_n \mapsto 0$).  For $\Omega'$ a rank $n-1$ lattice set
\[B(\Omega', \Lambda'; F|\widetilde{T}^{(n)}_j(p^2)) = \sum_{\Omega\text{ s.t.}\pi(\Omega) = \Omega'} A(\Omega, \Lambda; F|\widetilde{T}^{(n)}_j(p^2)).\]
So
\[\sum_{p\Lambda \subset \Omega \subset \frac{1}{p}\Lambda} A(\Omega, \Lambda; F|\widetilde{T}_j^{(n)}(p^2)) = \sum_{p\Lambda' \subset \Omega' \subset \frac{1}{p}\Lambda'} B(\Omega', \Lambda';F|\widetilde{T}_j^{(n)}(p^2)),\] and it suffices to show that
\begin{equation}\label{eqn:suffices-to-prove} \begin{aligned} B(\Omega', \Lambda'; F|\widetilde{T}_j^{(n)}(p^2)) &= A(\Omega', \Lambda'; \Phi(F)|\widetilde{T}_j^{(n-1)}(p^2)) \\
&\qquad + \widetilde{c}_{j, j-1}^{(n-1)}A(\Omega', \Lambda'; \Phi(F)|\widetilde{T}_{j-1}^{(n-1)}(p^2))\\ 
&\qquad + \widetilde{c}_{j, j-2}^{(n-1)}A(\Omega', \Lambda'; \Phi(F)|\widetilde{T}_{j-2}^{(n-1)}(p^2))\end{aligned}\end{equation}
for each $p\Lambda' \subset \Omega' \subset \frac{1}{p}\Lambda'$.\\

Take such an $\Omega'$, say with $p$-type $(l, r-l, n-r-1)$.  Then the $\Omega$ such that $\pi(\Omega) = \Omega'$ are described by Lemma \ref{lattice-counts}.  Working from the notation of Lemma \ref{lattice-counts}, let us write $\Omega^{(2)}$ for any lattice of the form $\Omega^{(2)}((\overline{\alpha_i}))$, $\Omega^{(3)}$ any lattice of the form $\Omega^{(3)}((\overline{\alpha_i}))$, and $\Omega^{(4)}(\overline{u'})$ any lattice of the form $\Omega^{(4)}(\overline{u'}, (\overline{\alpha_i}))$.  Then it is easy to see that
\begin{equation}\label{eqn:iso-omega1} \alpha^{(n)}_j(\Omega^{(1)}, \Lambda) = \alpha^{(n-1)}_{j-1}(\Omega', \Lambda')\end{equation}
and
\begin{equation}\label{eqn:iso-omega3} \alpha^{(n)}_j(\Omega^{(3)}, \Lambda) = \alpha^{(n-1)}_{j-1}(\Omega', \Lambda').\end{equation}
Indeed, by Lemma \ref{lattice-counts} we have, for $\Omega = \Omega^{(1)}$, $\Lambda_1/p\Lambda_1 = \Lambda_1'/p\Lambda_1'$.  Thus $\alpha^{(n)}_j(\Omega^{(1)}, \Lambda)$ counts the number of codimesnion $n-j$ totally isotropic subspaces of $\Lambda_1'/p\Lambda_1'$.  But $\alpha^{(n-1)}_{j-1}(\Omega', \Lambda')$ also counts the number of codimension $(n-1)-(j-1) = n-j$ totally istropoic subspace of $\Lambda_1'/p\Lambda_1'$.  The same argument works for $\Omega^{(3)}$.\\

\noindent For $\Omega = \Omega^{(2)}$ we have $\Lambda_1/p\Lambda_1 = \Lambda'_1/p\Lambda'_1 \oplus (\Z/p\Z)\overline{x_n}$, and $\alpha^{(n)}_j(\Omega^{(2)}, \Lambda)$ counts the number of codimension $n-j$ totally isotropic subspaces of this space.  $\Omega'$ has $p$-type $(l, r-l, n-r-1)$ so $\Lambda_1/p\Lambda_1$ has dimension $r-l+1$, so a codimension $n-j$ subspace is a dimension $r-l+1-n+j$ subspace.  Recall that the line $(\Z/p\Z)\overline{x_n}$ is isotropic.  A totally isotropic subspace of $\Lambda_1/p\Lambda_1$ of dimension $r-l-n+j+1$ is therefore either the direct sum $(\Z/p\Z)\overline{x_n}$ with a dimension $r-l-n+j$ subspace of $\Lambda'/p\Lambda'$ (of which there are $\alpha^{(n-1)}_{j-1}(\Lambda', \Omega')$); or is formed by picking a totally isotropic subspace of $\Lambda'/p\Lambda'$ of dimension $r-l-n+j+1$ (of which there are $\alpha^{(n-1)}_j(\Lambda', \Omega')$) and adding some $\overline{\alpha}\overline{x_n}$ ($\overline{\alpha} \in (\Z/p\Z)$) to each basis vector.   We therefore have
\begin{equation}\label{eqn:iso-omega2}  \alpha^{(n)}_j(\Omega^{(2)}, \Lambda) = p^{r-l-n+j+1} \alpha^{(n-1)}_j(\Omega', \Lambda') + \alpha^{(n-1)}_{j-1}(\Omega', \Lambda').\end{equation}
Finally, consider $\sum_{\overline{u'}} \alpha^{(n)}_j(\Omega^{(4)}(\overline{u'}), \Lambda)$.  For $\Omega = \Omega^{(4)}(\overline{u'})$, $\Lambda/p\Lambda$ is a codimension $1$ subspace of $\Lambda'/p\Lambda'$ which does not contain $\overline{u'}$.  $\alpha^{(n)}_j(\Omega^{(4)}(\overline{u'}), \Lambda)$, which counts the number of totally isotropic codimension $n-j$ subspaces of $\Lambda/p\Lambda$, therefore counts totally isotropic subspaces of $\Lambda/p\Lambda$ of dimension $r-l-n+j-1$.  Subspaces of this dimension in $\Lambda'/p\Lambda'$ are counted by $\alpha^{(n-1)}_{j-2}(\Omega', \Lambda')$.  Let $\overline{V}$ be a totally istropic subspace of $\Lambda'/p\Lambda'$ of dimension $r-l-n+j-1$; we will consider how many times $\overline{V}$ is counted in $\sum_{\overline{u'}} \alpha^{(n)}_j(\Omega^{(4)}(\overline{u'}), \Lambda)$.  For a fixed choice of nonzero $\overline{u'} \in \Lambda'/p\Lambda'$ we see that $\overline{V}$ is counted by $\alpha^{(n)}_j(\Omega^{(4)}(\overline{u'}), \Lambda)$ if and only if $\overline{u'} \notin \overline{V}$.  So the number of times $\overline{V}$ is counted in $\sum_{\overline{u'}} \alpha^{(n)}_j(\Omega^{(4)}(\overline{u'}), \Lambda)$ is precisely the number of nonzero vectors $\overline{u'} \in \Lambda'/p\Lambda'$ that are not contained in $\overline{V}$.  Since $\overline{V}$ has codimension $n-j+1$, the number of such $\overline{u'}$ is $p^{n-j+1}-1$.  We therefore have
\begin{equation}\label{eqn:iso-omega4} \sum_{\overline{u'}} \alpha^{(n)}_j(\Omega^{(4)}(\overline{u'}), \Lambda) = (p^{n-j+1}-1)\alpha^{(n-1)}_{j-2}(\Omega', \Lambda'). \end{equation}

Now the remaining quantities appearing in $A(\Omega, \Lambda; F|\widetilde{T}_j^{(n)}(p^2)$ depend only on the $p$-type of $\Omega$ in $\Lambda$.  Using this observation and the above computations together with the count of Lemma \ref{lattice-counts} we can write
\begin{equation}\label{eqn:B-using-lattice-count} \begin{aligned} B(\Omega', \Lambda'; F|\widetilde{T}^{(n)}_j(p^2)) &= A(\Omega^{(1)}, \Lambda; F|\widetilde{T}^{(n)}_j(p^2)) \\
&\qquad+ p^l A(\Omega^{(2)}, \Lambda; F|\widetilde{T}^{(n)}_j(p^2)) \\
&\qquad+ p^{r+l} A(\Omega^{(3)}, \Lambda; F|\widetilde{T}^{(n)}_j(p^2)) \\
&\qquad+ p^l \sum_{\overline{u'}} A(\Omega^{(4)}(\overline{u'}), \Lambda; F|\widetilde{T}^{(n)}_j(p^2)). \end{aligned} \end{equation}
Now the appearance to the subscript $j$ on the right hand side of (\ref{eqn:iso-omega2}) suggests that we should consider $\Omega^{(2)}$ first: one easily computes from 
\[\begin{aligned} E_j(\Omega^{(2)}, \Lambda) &= n-r-j-1 + E_j(\Omega', \Lambda') \\
\chi(p^{j-n}[\Omega^{(2)}:p\Lambda]) &= \chi(p^{j-n+1}[\Omega':p\Lambda']),\end{aligned}\] 
and (\ref{eqn:iso-omega2}) that
\[\begin{aligned} p^l A(\Omega^{(2)}, \Lambda; \widetilde{T}^{(n)}_j(p^2)) &= A(\Omega', \Lambda'; \Phi(F) | \widetilde{T}^{(n-1)}_j(p^2)) \\
&\qquad + p^l \chi(p^{j-n}[\Omega^{(2)}:p\Lambda]) p^{E_j(\Omega^{(2)}, \Lambda)} \alpha_{j-1}^{(n-1)}(\Omega', \Lambda')a(\Omega'; \Phi(F)).\end{aligned}\]
Substituting this in to (\ref{eqn:B-using-lattice-count}) we have
\begin{equation}\label{eqn:B-after-j-gone} \begin{aligned} B(\Omega', \Lambda'; F|\widetilde{T}^{(n)}_j(p^2)) &= A(\Omega', \Lambda'; \Phi(F)| \widetilde{T}^{(n-1)}_j(p^2)) \\
&\qquad+A(\Omega^{(1)}, \Lambda; F|\widetilde{T}^{(n)}_j(p^2)) \\
&\qquad+ p^l \chi(p^{j-n}[\Omega^{(2)}:p\Lambda]) p^{E_j(\Omega^{(2)}, \Lambda')} \alpha^{(n-1)}_{j-1}(\Omega', \Lambda')a(\Omega'; \Phi(F)) \\
&\qquad+ p^{r+l} A(\Omega^{(3)}, \Lambda; F|\widetilde{T}^{(n)}_j(p^2)) \\
&\qquad+ p^l \sum_{\overline{u'}} A(\Omega^{(4)}(\overline{u'}), \Lambda; F|\widetilde{T}^{(n)}_j(p^2)). \end{aligned}\end{equation}
From the formulas
\[\begin{aligned} E^{(n)}_j(\Omega^{(1)}, \Lambda) &= 2k - j - n + E^{(n-1)}_{j-1}(\Omega', \Lambda'),\\ 
E^{(n)}_j(\Omega^{(2)}, \Lambda) &= -l - n + k + E^{(n-1)}_{j-1}(\Omega', \Lambda'),\\
E^{(n)}_j(\Omega^{(3)}, \Lambda) &= -r - l + n - j + E^{(n-1)}_{j-1}(\Omega', \Lambda'), \end{aligned} \]
and
\[\begin{aligned} \chi(p^{j-n}[\Omega^{(1)}:p\Lambda]) &= \chi(p^2)\chi(p^{j-n}[\Omega': p\Lambda']), \\
\chi(p^{j-n}[\Omega^{(2)}:p\Lambda]) &= \chi(p)\chi(p^{j-n}[\Omega': p\Lambda']), \\
\chi(p^{j-n}[\Omega^{(3)}:p\Lambda]) &= \chi(p^{j-n}[\Omega': \Lambda']),\end{aligned}\]
together with (\ref{eqn:iso-omega1}) and (\ref{eqn:iso-omega3}) we easily compute
\[\begin{aligned} A(\Omega^{(1)}, \Lambda; F|\widetilde{T}^{(n)}_j(p^2)) &= \chi(p^2)p^{2k-j-n}A(\Omega', \Lambda'; \Phi(F)|\widetilde{T}^{(n-1)}_{j-1}(p^2)), \\
 p^l \chi(p^{j-n}[\Omega^{(2)}:p\Lambda]) p^{E_j(\Omega^{(2)}, \Lambda)} \alpha^{(n-1)}_{j-1}(\Omega', \Lambda')a(\Omega'; \Phi(F))
 &= \chi(p)p^{k-n} A(\Omega', \Lambda'; \Phi(F)|\widetilde{T}^{(n-1)}_{j-1}(p^2)), \\
 p^{r+l} A(\Omega^{(3)}, \Lambda; F|\widetilde{T}^{(n)}_j(p^2)) &= p^{n-j} A(\Omega', \Lambda'; \Phi(F)|\widetilde{T}^{(n-1)}_{j-1}(p^2)). \end{aligned} \]
Substituting these in to (\ref{eqn:B-after-j-gone}) we obtain
\begin{equation}\label{eqn:B-after-j-1-gone} \begin{aligned} B(\Omega', \Lambda'; F|\widetilde{T}^{(n)}_j(p^2)) &= A(\Omega', \Lambda'; \Phi(F)| \widetilde{T}^{(n-1)}_j(p^2)) \\
&\qquad + (\chi(p^2)p^{2k-j-n} + \chi(p)p^{k-n} + p^{n-j})A(\Omega', \Lambda'; \Phi(F)|\widetilde{T}^{(n-1)}_{j-1}(p^2)) \\
&\qquad +p^l \sum_{\overline{u'}} A(\Omega^{(4)}(\overline{u'}), \Lambda; F|\widetilde{T}^{(n)}_j(p^2)). \end{aligned}\end{equation}
Finally, from 
\[\begin{aligned} E_j(\Omega^{(4)}(\overline{u'}), \Lambda) &= 2k-j-n-l - E_{j-2}(\Omega', \Lambda') \\
\chi(p^{j-n}[\Omega^{(4)}(\overline{u'}):p\Lambda]) &= \chi(p^2)\chi(p^{j-n-1}[\Omega':p\Lambda']),\end{aligned}\] 
and (\ref{eqn:iso-omega4}) we compute
\[\begin{aligned} & p^l \sum_{\overline{u'}} A(\Omega^{(4)}(\overline{u'}), \Lambda; F|\widetilde{T}^{(n)}_j(p^2))\\
&\qquad = \chi(p^2)p^{2k-j-n}(p^{2k-2j+1}-p^{2k-j-n})A(\Omega', \Lambda'; \Phi(F)|\widetilde{T}^{(n-1)}_{j-2}(p^2))\end{aligned} \]
Substituting this in to (\ref{eqn:B-after-j-1-gone}) we obtain
\begin{equation} \begin{aligned} B(\Omega', \Lambda'; \widetilde{T}^{(n)}_j(p^2)) &= A(\Omega', \Lambda'; \widetilde{T}^{(n-1)}_j(p^2)) \\
&\qquad+ (\chi(p^2)p^{2k-j-n} + \chi(p)p^{k-n} + p^{n-j})A(\Omega', \Lambda'; \widetilde{T}^{(n-1)}_{j-1}(p^2)) \\
&\qquad+ \chi(p^2)(p^{2k-2j+1}-p^{2k-j-n})A(\Omega', \Lambda'; \widetilde{T}^{(n-1)}_{j-2}(p^2))\end{aligned}\end{equation}
This is (\ref{eqn:suffices-to-prove}), so the proof is complete.
\end{proof}

From this it is straightforward to deduce Theorem \ref{thm:first-relation}:

\begin{proof}  [Proof of Theorem \ref{thm:first-relation} for $T_j^{(n)}(p^2)$]  Applying $\Phi$ to the definition (\ref{eqn:tilde-T-definition}) we have
\[\Phi(F|\widetilde{T}_j^{(n)}(p^2)) = p^{(n-j)(n-k+1)} \overline{\chi}(p^{n-j}) \sum_{t=0}^j \left(\frac{n-t}{j-t}\right)_p \Phi(F | T_t^{(n)}(p^2)).\]
Now it is clear from Proposition \ref{prop:tilde-intertwining-relations} that 
\[\Phi(F|T_t^{(n)}(p^2)) = \sum_{s=0}^t c_{t, s}^{(n-1)}\Phi(F)|T^{(n-1)}_s(p^2)\] 
for some complex numbers $c_{t, s}^{(n-1)}$.  Thus we can write
\begin{equation}\label{eqn:deduce-nontilde-LHS} \Phi(F|\widetilde{T}_j^{(n)}(p^2)) = p^{(n-j)(n-k+1)} \overline{\chi}(p^{n-j}) \sum_{t=0}^j \left(\frac{n-t}{j-t}\right)_p \sum_{s=0}^t c_{t, s}^{(n-1)}\Phi(F)|T^{(n-1)}_s(p^2).\end{equation}

On the other hand
\[\begin{aligned} \Phi(F|\widetilde{T}_j^{(n)}(p^2)) &= \Phi(F) | \widetilde{T}_j^{(n-1)}(p^2) + \widetilde{c}_{j, j-1}^{(n-1)}\Phi(F) | \widetilde{T}_j^{(n-1)}(p^2) \\
&\qquad + \widetilde{c}_{j, j-2}^{(n-1)}\Phi(F) | \widetilde{T}_j^{(n-1)}(p^2) \end{aligned}\]
which we can write as
\begin{equation}\label{eqn:deduce-nontilde-RHS}\begin{aligned}&\Phi(F|\widetilde{T}_j^{(n)}(p^2)) \\
&\qquad= p^{(n-1-j)(n-k)}\overline{\chi}(p^{n-1-j})\sum_{t=0}^j \left(\frac{n-1-t}{j-t}\right)_p \Phi(F)|T^{(n-1)}_t(p^2) \\
&\qquad\qquad + \widetilde{c}_{j, j-1}^{(n-1)}p^{(n-j)(n-k)}\overline{\chi}(p^{n-j})\sum_{t=0}^{j-1}\left(\frac{n-1-t}{j-1-t}\right)_p \Phi(F)|T_t^{(n-1)}(p^2) \\
&\qquad\qquad + \widetilde{c}_{j, j-2}^{(n-1)}p^{(n-j+1)(n-k)}\overline{\chi}(p^{n+1-j})\sum_{t=0}^{j-2} \left(\frac{n-1-t}{j-2-t}\right)_p \Phi(F)|T_t^{(n-1)}(p^2).\end{aligned} \end{equation}
Comparing the coefficient of $\Phi(f)|T_j^{(n-1)}(p^2)$ between (\ref{eqn:deduce-nontilde-LHS}) and (\ref{eqn:deduce-nontilde-RHS}) we have
\[p^{(n-j)(n-k+1)}\overline{\chi}(p^{n-j}) c_{j, j}^{(n-1)} = p^{(n-1-j)(n-k)}\overline{\chi}(p^{n-1-j})\]
from which we get $c_{j, j}^{(n-1)} = \chi(p)p^{j+k-2n}$.  Arguing similarly but with more tedious computation we compute the remaining coefficients and deduce Theorem \ref{thm:first-relation}. \end{proof}

\section{The intertwining relation for $\Phi$ and $T^{(n)}(p)$}\label{sctn:easy-relations}

We now describe how one can use a similar (but much easier) argument to that of \S\ref{sctn:first-relations} to derive the intertwining relation for the operator $T^{(n)}(p):=T^{(n)}(p, \chi)$ (dropping the character from the notation for this section only for ease of notation).  As before let $F \in \mathcal{M}_k^{(n)}(N, \chi)$ have Fourier expansion (\ref{eqn:lattice-fourier-expansion}).  Applying the Hecke operator $T^{(n)}(p)$ then the Siegel lowering operator $\Phi$ to we obtain
\begin{equation}\label{eqn:Tp-then-phi}\Phi(F | T^{(n)}(p))(Z') = \sum_{\Lambda'} \sum_{p\Lambda \subset \Omega \subset \frac{1}{p}\Lambda} A(\Omega, \Lambda; F|T^{(n)}(p))e\{\Lambda' Z'\};\end{equation}
and if we apply $\Phi$ first then $T^{(n-1)}(p)$ we obtain
\begin{equation}\label{eqn:phi-then-Tp}(\Phi(F) | T^{(n-1)}(p))(Z') = \sum_{\Lambda'} \sum_{p\Lambda' \subset \Omega' \subset \frac{1}{p}\Lambda'} A(\Omega', \Lambda'; \Phi(F)|T^{(n-1)}(p)) e\{\Lambda' Z'\},\end{equation}
and we must compare the Fourier coefficients in (\ref{eqn:Tp-then-phi}) and (\ref{eqn:phi-then-Tp}).  Fix an indexing lattice $\Lambda'$.  Let $\Omega'$ be a rank $n-1$ lattice, and define
\[B(\Omega', \Lambda'; F|{T}^{(n)}(p)) = \sum_{\Omega\text{ s.t.}\pi(\Omega) = \Omega'} A(\Omega, \Lambda; F|{T}^{(n)}(p)).\]
As in the proof of Proposition \ref{prop:tilde-intertwining-relations} we find that it suffices to show that
\begin{equation}\label{eqn:easy-suffices-to-prove} \begin{aligned} B(\Omega', \Lambda'; F|{T}^{(n)}(p)) &= (1 + \chi(p)p^{k-n}) A(\Omega', \Lambda'; \Phi(F)|{T}^{(n-1)}(p^2)) \end{aligned}\end{equation}
for each $p\Lambda' \subset \Omega' \subset \frac{1}{p}\Lambda'$.\\

\noindent It is again useful to classify all the lattices $\Omega$ which project on to a given $\Omega'$, and record some of the properties of such $\Omega$.  This is provided by the following two lemmas:

\begin{lemma}  There is a one-to-one correspondence between: \begin{enumerate}
\item  lattices $\Omega$ such that $p\Lambda \subset \Omega \subset \Lambda$ with $p$-type $(s, n-s)$,
\item  $s$-dimensional subspaces $\overline{\Delta}$ of $\Lambda/p\Lambda$. \end{enumerate} \end{lemma}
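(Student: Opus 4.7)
The plan is to mimic the first half of the proof of Lemma \ref{lattice-parameterisations}, where the simpler situation $p\Lambda \subset \Omega \subset \Lambda$ requires only one application of the invariant factor theorem instead of the nested choice of $\overline{\Delta_1}$ and $\overline{\Delta_2}$.

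For the forward direction, given $\Omega$ with $p\Lambda \subset \Omega \subset \Lambda$ of $p$-type $(s, n-s)$, the invariant factor theorem furnishes decompositions $\Lambda = \Lambda_0 \oplus \Lambda_1$ and $\Omega = \Lambda_0 \oplus p\Lambda_1$ with $\rk(\Lambda_0) = s$ and $\rk(\Lambda_1) = n-s$. I would associate to $\Omega$ the subspace $\overline{\Delta} := (\Omega + p\Lambda)/p\Lambda \subset \Lambda/p\Lambda$, which is just the image of $\Lambda_0$ and visibly has dimension $s$.

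For the inverse direction, given an $s$-dimensional subspace $\overline{\Delta} \subset \Lambda/p\Lambda$, I would choose a basis $(\overline{y}_1,\ldots,\overline{y}_s)$ of $\overline{\Delta}$ and extend to a basis $(\overline{y}_1,\ldots,\overline{y}_n)$ of $\Lambda/p\Lambda$. Comparing with the fixed basis $(\overline{x}_1,\ldots,\overline{x}_n)$ of $\Lambda/p\Lambda$ gives a matrix $\overline{G} \in \GL_n(\Z/p\Z)$; after rescaling one of the $\overline{y}_i$ we may assume $\overline{G} \in \SL_n(\Z/p\Z)$, and by surjectivity of the reduction map $\SL_n(\Z) \to \SL_n(\Z/p\Z)$ we may lift to $G \in \SL_n(\Z)$ and hence to an actual basis $(y_1,\ldots,y_n)$ of $\Lambda$ with $y_i \bmod p = \overline{y}_i$. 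I would then set
\[\Omega \;=\; \Z y_1 \oplus \cdots \oplus \Z y_s \oplus \Z p y_{s+1} \oplus \cdots \oplus \Z p y_n,\]
which manifestly satisfies $p\Lambda \subset \Omega \subset \Lambda$ and has $p$-type $(s,n-s)$.

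The remaining work is to verify that the two assignments are mutually inverse and independent of auxiliary choices. The composition subspace$\to$lattice$\to$subspace recovers $\overline{\Delta}$ tautologically from the construction. The composition lattice$\to$subspace$\to$lattice returns the original $\Omega$ because any two choices of lifted basis differ by an element of the kernel of $\SL_n(\Z)\to\SL_n(\Z/p\Z)$, and such a change visibly preserves the $\Z$-span displayed above. I expect no serious obstacle here: the statement is essentially a warm-up for Lemma \ref{lattice-parameterisations}, with the absence of an intermediate $\overline{\Delta_2}$ meaning that Lemma \ref{trivial-lifting-lemma} is not needed and no delicate compatibility between two layers of choices arises.
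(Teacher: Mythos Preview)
Your proposal is correct and follows exactly the approach the paper intends: the paper does not actually write out a proof for this lemma, merely remarking that ``the proofs are similar to (but easier than) Lemmas \ref{lattice-parameterisations} and \ref{lattice-counts}''. Your argument is precisely the expected simplification of Lemma \ref{lattice-parameterisations}, and one could streamline it further by noting that the inverse map is nothing more than $\overline{\Delta}\mapsto$ (preimage of $\overline{\Delta}$ in $\Lambda$), which makes well-definedness and the bijectivity immediate without tracking bases.
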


\begin{lemma}\label{lem:easy-lattice-counts}  Let $\Omega'$ be a lattice with $p\Lambda' \subset \Omega' \subset \Lambda'$ and $p$-type $(r, n-r-1)$.  Under the map $\pi : \Lambda \to \Lambda'$, the lattices that project on to $\Omega'$ are classified as follows: \renewcommand{\labelenumi}{(\Alph{enumi})}\begin{enumerate}
\item  one lattice with $p$-type $(r+1, n-r-1)$, which we will denote $\Omega^{(1)}$,
\item $p^r$ lattices with $p$-type $(r, n-r)$, which we will denote by $\Omega^{(2)}((\overline{\alpha_i})_{1 \leq i \leq s})$, where $\overline{\alpha_i} \in \mathbb{F}_p$.  \end{enumerate} \end{lemma}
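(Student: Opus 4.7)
The plan is to invoke the bijection established by the preceding (unnumbered) lemma: rank-$n$ lattices $\Omega$ with $p\Lambda \subset \Omega \subset \Lambda$ correspond to subspaces $\overline{\Delta} \subset \Lambda/p\Lambda$, with $s$-dimensional $\overline{\Delta}$ matching $\Omega$ of $p$-type $(s, n-s)$.  Apply the same bijection to $\Omega'$ to obtain an $r$-dimensional subspace $\overline{\Delta'} \subset \Lambda'/p\Lambda'$.  Since $\Lambda = \Lambda' \oplus \Z x_n$, the projection $\pi : \Lambda \to \Lambda'$ induces a reduction $\overline{\pi}: \Lambda/p\Lambda \to \Lambda'/p\Lambda'$ whose kernel is the line $\mathbb{F}_p \overline{x_n}$; the condition $\pi(\Omega) = \Omega'$ translates into $\overline{\pi}(\overline{\Delta}) = \overline{\Delta'}$.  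My task is therefore to enumerate all subspaces $\overline{\Delta}$ of $\Lambda/p\Lambda$ which surject onto $\overline{\Delta'}$ under $\overline{\pi}$.

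I would split according to whether $\overline{x_n} \in \overline{\Delta}$.  If yes, then the constraint $\overline{\pi}(\overline{\Delta}) = \overline{\Delta'}$ forces $\overline{\Delta} = \overline{\pi}^{-1}(\overline{\Delta'}) = \overline{\Delta'} \oplus \mathbb{F}_p \overline{x_n}$, which has dimension $r+1$; this gives the unique lattice $\Omega^{(1)}$ of $p$-type $(r+1, n-r-1)$.  If no, then $\overline{\pi}$ restricts to an isomorphism $\overline{\Delta} \to \overline{\Delta'}$, so $\overline{\Delta}$ is the graph of a section of $\overline{\pi}|_{\overline{\Delta'}}$.  Fixing a basis $(\overline{w_1}, \ldots, \overline{w_r})$ of $\overline{\Delta'}$ together with arbitrary lifts $\overline{w_i}' \in \Lambda/p\Lambda$, every such $\overline{\Delta}$ is spanned by vectors $\overline{w_i}' + \overline{\alpha_i}\overline{x_n}$ for some $\overline{\alpha_i} \in \mathbb{F}_p$, and distinct tuples $(\overline{\alpha_i})$ give distinct subspaces (since a coincidence would force $\overline{x_n} \in \overline{\Delta}$, contrary to our case).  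This yields exactly $p^r$ subspaces of dimension $r$, hence $p^r$ lattices $\Omega^{(2)}((\overline{\alpha_i})_{1 \leq i \leq r})$ of $p$-type $(r, n-r)$.

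Finally one notes that these two families of $\overline{\Delta}$ are disjoint (different dimensions) and exhaust every subspace mapping onto $\overline{\Delta'}$ (since membership of $\overline{x_n}$ is a dichotomy), so the classification is complete.  There is no genuine obstacle: this is the unnested analogue of Lemma \ref{lattice-parameterisations}/Lemma \ref{lattice-counts}, with only one layer of choice to make.  Should one wish to produce explicit generators for each $\Omega^{(2)}((\overline{\alpha_i}))$---as was done in Lemma \ref{lattice-counts} using Lemma \ref{trivial-lifting-lemma}---the same basis-lifting manoeuvres work verbatim, but these are not needed for the lemma as stated, which only demands the counts and $p$-types.
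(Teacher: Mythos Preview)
Your proof is correct and follows the same underlying strategy as the paper: the paper does not spell out the argument but simply says ``similar to (but easier than) Lemmas \ref{lattice-parameterisations} and \ref{lattice-counts}'', and those lemmas proceed by exactly the dichotomy you use, namely whether or not $\overline{x_n}$ lies in the relevant subspace.  Your presentation is somewhat more conceptual---phrasing the $\overline{x_n} \notin \overline{\Delta}$ case in terms of sections of $\overline{\pi}$ over $\overline{\Delta'}$ rather than writing down explicit lifted bases via Lemma \ref{trivial-lifting-lemma}---but this is a stylistic difference, not a mathematical one, and indeed you note yourself that the explicit generators can be recovered by the same basis-lifting manoeuvres if needed.
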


The proofs are similar to (but easier than) Lemmas \ref{lattice-parameterisations} and \ref{lattice-counts}.  Then writing $\Omega^{(2)}$ for any $\Omega^{(2)}(\overline{\alpha_i})$ we compute, using the notation of Theorem \ref{action-of-Tp-on-fourier-exp}, 
\[\begin{aligned} E^{(n)}(\Omega^{(1)}, \Lambda) &= k-n + E^{(n-1)}(\Omega', \Lambda'), \\
E^{(n)}(\Omega^{(2)}, \Lambda) &= -r + E^{(n-1)}(\Omega', \Lambda'), \end{aligned}\]
and
\[\begin{aligned} \chi([\Omega^{(1)}:p\Lambda]) &= \chi(p)\chi([\Omega':p\Lambda']), \\
\chi([\Omega^{(2)}:p\Lambda]) &= \chi([\Omega':p\Lambda']), \end{aligned}\]
so that
\begin{equation}\label{eqn:easy-relate-up-to-down}\begin{aligned} A(\Omega^{(1)}, \Lambda; F|T^{(n)}(p)) &= \chi(p)p^{k-n}A(\Omega', \Lambda'; \Phi(F)|T^{(n-1)}(p)) \\
A(\Omega^{(2)}, \Lambda; F|T^{(n)}(p) &= p^{-r} A(\Omega', \Lambda'; \Phi(F) | T^{(n-1)}(p)).\end{aligned}\end{equation}
But by Lemma \ref{lem:easy-lattice-counts} we have
\[B(\Omega', \Lambda'; F|T^{(n)}(p)) = A(\Omega^{(1)}, \Lambda; F|T^{(n)}(p)) + p^r A(\Omega^{(2)}, \Lambda; F|T^{(n)}(p)).\]
Substituting (\ref{eqn:easy-relate-up-to-down}) in to this we obtain (\ref{eqn:easy-suffices-to-prove}).  This proves the intertwining relation for $T^{(n)}(p)$ stated in Theorem \ref{thm:first-relation}.

\section{Review of the Satake compactification}\label{satake-compactification-general}

Let $\Gamma^{(n)}$ be a congruence subgroup of $\Sp_{2n}(\Q)$, so that $\Gamma^{(n)}$ acts on $\mathfrak{H}_n$ by \eqref{eqn:gsp4-action-on-hn}, the resulting quotient space $\Gamma^{(n)} \backslash \mathfrak{H}_n$ is a complex analytic space of dimension $n(n+1)/2$.  There are various approaches to compactifying this space in the literature but the simplest and most important for the classical theory of Siegel modular forms is the \textit{Satake compactification}.  We will briefly review this construction; our account is based on \cite{PoorYuen2013}, in which a very explicit description of the cuspidal structure in the case of paramodular level is also given.  In the following section we will provide a similar explicit description for level $\Gamma_0^{(n)}(N)$ when $N$ is squarefree.\\  

\noindent Let $\C^{2n \times n}_{\text{rank }n} \subset \C^{2n \times n}$ be the subset of rank $n$ matrices.  Let
\[\Gr_\C(2n, n) = \C^{2n \times n}_{\text{rank }n} / \GL_n(\C)\]
be the Grassmannian of rank $n$ subspaces of $\C^{2n}$, and write $\left[\begin{smallmatrix} M \\ N\end{smallmatrix}\right] \in \Gr_\C(2n, n)$ for the class of $\left(\begin{smallmatrix} M \\ N \end{smallmatrix}\right) \in \C^{2n \times n}_{\text{rank }n}$.  Consider the subspace of isotropic subspaces
\[\Gr_\C^{\text{iso}}(2n, n) = \left\{ \left[\begin{matrix} M \\ N \end{matrix}\right] \in \Gr_\C(2n, n);\: \left(\begin{matrix} {}^t M & {}^tN \end{matrix}\right) \left(\begin{matrix} 0_n & -1_n \\ 1_n & 0_n\end{matrix}\right) \left(\begin{matrix} M \\ N \end{matrix}\right) = 0\right\}.\]
We shall endow $\Gr_\C^{\text{iso}}(2n, n)$ with the complex structure it naturally inherits from these definitions.  We let $\Sp_{2n}(\C)$ act on $\Gr_\C^{\text{iso}}(2n, n)$ via matrix multiplication on the left.\\

Consider the upper half space $\mathfrak{H}_r$ for $0 \leq r \leq n$, with the convention $\mathfrak{H}_0 = \{\infty\}$ is a singleton.  Let $j_{r, n}: \mathfrak{H}_r \hookrightarrow \Gr_\C(2n, n)$ be given by
\[j_{r, n}(Z) = \left[\begin{matrix} 1_n \\ \left(\begin{smallmatrix} Z^{-1} & 0 \\ 0 & 0 \end{smallmatrix}\right) \end{matrix}\right];\]
when $r=0$ we mean that the bottom $n \times n$ block is $0_n$.  One easily sees that $j_{r, n}(\mathfrak{H}_r) \subset \overline{j_{n, n}(\mathfrak{H}_n)}$ (the closure taking place inside $\Gr_\C^{\text{iso}}(2n, n)$).  For $0 \leq r \leq n$ consider the orbit $\Sp_{2n}(\Q)j_{r, n}(\mathfrak{H}_r)$; note that when $r=n$ this is just $\mathfrak{H}_n$, but for $r<n$ it is strictly larger than $\mathfrak{H}_r$.  Now define a subspace $\mathfrak{H}_n^*$ of $\Gr_\C^{\text{iso}}(2n, n)$ by
\[\mathfrak{H}_n^* = \bigsqcup_{r=0}^n \Sp_{2n}(\Q) j_{r, n}(\mathfrak{H}_{r}).\]
Then $\mathfrak{H}_n^*$ is naturally equipped with an action of $\Gamma^{(n)}$, and the \textit{Satake compactification} of $\Gamma^{(n)} \backslash \mathfrak{H}_n$ is simply the quotient $\Gamma^{(n)} \backslash \mathfrak{H}_n^*$.  Now $\Gamma^{(n)} \backslash \mathfrak{H}_n^*$, being a subquotient of $\Gr_\C(2n, n)$, comes equipped with a natural topology, under which it becomes a compact Hausdorff space.  We note that
\begin{equation}\label{eqn:r-cusp-representatives} \begin{aligned} \Sp_{2n}(\Q) j_{r, n}(\mathfrak{H}_r) = \bigsqcup_i \Gamma^{(n)} \gamma_i j_{r, n}(\mathfrak{H}_r) \end{aligned}\end{equation}
where the $\gamma_i$ are a system of representatives for 
\[\Gamma^{(n)} \backslash \Sp_{2n}(\Q) / \Stab_{\Sp_{2n}(\Q)}(j_{r, n}(\mathfrak{H}_r))\]  
One can explicitly compute that this stabiliser is equal to $P_{n, r}(\Q)$, where $P_{n, r} \subset \Sp_{2n}$ is the parabolic subgroup defined in (\ref{eqn:Pnr-definition}).  Recall also the surjection $\omega_{n, r}$ from (\ref{eqn:omeganr-definition}); this is split by the map $\xi_{r, n} : \Sp_{2r}(\Q) \to P_{n, r}(\Q)$ defined by
\[\xi_{r, n}\left(\begin{matrix} A_{11} & B_{11} \\ C_{11} & D_{11} \end{matrix}\right) = \left(\begin{matrix} A_{11} & 0 & B_{11}& 0 \\ 0 & 1_{n-r} & 0 & 0_{n-r} \\ C_{11} & 0 & D_{11} & 0 \\ 0 & 0_{n-r} & 0 & 1_{n-r} \end{matrix}\right).\]
Now consider an individual $\Gamma^{(n)} \gamma j_{r, n}(\mathfrak{H}_r)$ in (\ref{eqn:r-cusp-representatives}).  Let 
\[\Gamma^{(r)}_\gamma := \omega_{n, r} (\gamma^{-1} \Gamma^{(n)} \gamma \cap P_{n, r}(\Q)).\]  
The map defined on $\mathfrak{H}_r$ by
\[Z \mapsto \Gamma^{(n)} \backslash \Gamma^{(n)} \gamma j_{r, n}(Z)\]
induces an isomorphism
\[\Gamma^{(r)}_\gamma \backslash \mathfrak{H}_r \to \Gamma^{(n)} \backslash \Gamma^{(n)} \gamma j_{n, r}(\mathfrak{H}_r) \subset \Gamma^{(n)} \backslash \mathfrak{H}_n^*.\]
The space $\Gamma^{(r)}_\gamma \backslash \mathfrak{H}_r$ is therefore embedded inside $\Gamma^{(n)} \backslash \mathfrak{H}_n^*$.  This is a space of dimension $r(r+1)/2$, and we shall (temporarily) refer to this as the $r$-cusp of $\Gamma^{(n)} \backslash \mathfrak{H}_n^*$ associated to $\gamma$.\\

Now an $r$-cusp $\Gamma^{(r)}_\gamma \backslash \mathfrak{H}_r$ is the quotient of an upper half plane by a congruence subgroup, and we could therefore form the compactification as above.  Abstractly this would involve forming the space $\mathfrak{H}_r^* = \bigsqcup_{s=0}^r \Sp_{2r}(\Q)j_{s, r}(\mathfrak{H}_{s})$, and writing
\[\Sp_{2r}(\Q)j_{s, r}(\mathfrak{H}_{s}) = \bigsqcup_i \Gamma^{(r)}_\gamma \rho_i j_{s, r}(\mathfrak{H}_s),\]
where the $\rho_i$ are a system of representatives for 
\[\Gamma^{(r)}_\gamma \backslash \Sp_{2r}(\Q) / P_{r, s}(\Q).\]  
However we want to construct this compactification not with a new incarnation but rather in the already-carnate space $\Gamma^{(n)} \backslash \mathfrak{H}_n^*$.  We therefore extend the embedding of $\Gamma^{(r)}_\gamma \backslash \mathfrak{H}_r$ to an embedding of $\Gamma^{(r)}_\gamma \bs \mathfrak{H}_r^*$ as follows:  given $0 \leq s \leq r$, $Z \in \mathfrak{H}_s$, $\rho \in \Sp_{2r}(\Q)$, and $\gamma \in \Sp_{2n}(\Q)$ consider the map
\[\Gamma^{(r)}_\gamma \rho j_{s, r}(Z) \mapsto \Gamma^{(n)} \gamma \xi_{r, n}(\rho) j_{s, n}(Z).\]
This induces a well-defined isomorphism
\[\Gamma^{(r)}_\gamma \backslash \Gamma^{(r)}_\gamma \rho j_{s, r}(\mathfrak{H}_s) \to \Gamma^{(n)} \backslash \Gamma^{(n)} \gamma \xi_{r, n}(\rho) j_{s, n}(\mathfrak{H}_s).\]
Varying $s$ and $\rho$ we obtain an embedding $\Gamma^{(r)}_\gamma \backslash \mathfrak{H}_r^* \hookrightarrow \Gamma^{(n)} \backslash \mathfrak{H}_n^*$ (in fact, the image of $\Gamma^{(r)}_\gamma \bs \mathfrak{H}_r^*$ under this embedding is simply the closure of $\Gamma^{(r)}_\gamma \bs \mathfrak{H}_r$ in $\Gamma^{(n)} \backslash \mathfrak{H}_n^*$).  We shall replace our earlier convention and now call $\Gamma^{(r)}_\gamma \bs \mathfrak{H}_r^*$ (viewed inside $\Gamma^{(n)} \backslash \mathfrak{H}_n^*$) the $r$-cusp of $\Gamma^{(n)} \bs \mathfrak{H}_N^*$ associated to $\gamma$.

\begin{remark}\label{rmk:boundary-component-depends-on-rep}  The arithmetic subgroup $\Gamma^{(r)}_\gamma$, and hence the structure of the cusp $\Gamma^{(r)}_\gamma \backslash \mathfrak{H}_r^*$, depends on the choice of representative $\gamma$ for $\Gamma^{(n)}\gamma P_{n, r}(\Q)$.  More precisely, it is invariant under left multiplication by $\Gamma^{(n)}$, but changes by a conjugation if we right multiply by some element of $P_{n, r}(\Q)$.  Similarly, one may work with instead with the double coset space $\Gamma^{(n)} \backslash \GSp_{2n}(\Q) / P_{n, r}^*(\Q)$ (which is in bijection with $\Gamma^{(n)} \backslash \Sp_{2n}(\Q) / P_{n, r}(\Q)$) where $P_{n, r}^*$ is the parabolic subgroup of $\GSp_{2n}$ which contains $P_{n, r}$, and a similar statement holds.  For the remainder of this section and for \S\ref{sctn:satake-compactification-description} we are only interested in properties of the double cosets so this remark is unimportant.  However, this technicality will become important from \S\ref{sctn:intertwining-relations-any-cusp} onwards.\end{remark}

\noindent We now record some observations regarding cusp crossings: let $\gamma$ represent an $r$-cusp of $\Gamma^{(n)} \backslash \mathfrak{H}_n^*$ and $\rho$ represent an $s$-cusp of $\Gamma^{(r)}_\gamma \backslash \mathfrak{H}_r^*$.  By the above embedding, the latter may be thought of as an $s$-cusp of $\Gamma^{(n)} \backslash \mathfrak{H}_n^*$; explicitly this is the $s$-cusp given by the double coset $\Gamma^{(n)} \gamma \xi_{r, n}(\rho) P_{n, s}(\Q)$.  Then:
\begin{itemize} \item  if this same coset can be realised with two inequivalent $\gamma$ and $\gamma'$ (i.e. the double cosets $\Gamma^{(n)}\gamma P_{n, r}(\Q)$ and $\Gamma^{(n)}\gamma' P_{n, r}(\Q)$ are different), then the two distinct $r$-cusps corresponding to $\gamma$ and $\gamma'$ intersect at this $s$-cusp,
\item  if this same coset can be obtained with the same (or just equivalent) $\gamma$ but inequivalent $\rho$ and $\rho'$ (i.e. $\Gamma_\gamma^{(r)} \rho P_{r, s}(\Q)$ and $\Gamma_\gamma^{(r)} \rho' P_{r, s}(\Q)$ are different) then the $r$-cusp corresponding to $\gamma$ self-intersects at this $s$-cusp.  \end{itemize}

\section{The Satake compactification of $\Gamma_0^{(n)}(N) \backslash \mathfrak{H}_n$}\label{sctn:satake-compactification-description}

In this section we will provide an explicit description of the cuspidal configuration of $\Gamma_0^{(n)}(N) \backslash \mathfrak{H}_n^*$, where $N$ is square-free.  It is well-known that for $n=1$ that the $0$-cusps are in bijection with positive divisors of $N$.  For $n=2$ one must consider not only $1$- and $0$-cusps, but also how the former may cross at the latter.  An account of this is given in \cite{BoechererIbukiyama2012}.  Motivated by this we will proceed analogously for general $n$.  \\

\noindent Recall from \S\ref{satake-compactification-general} that the $r$-cusps of $\Gamma_0^{(n)}(N) \backslash \mathfrak{H}_n^*$ correspond bijectively to 
\[\Gamma_0^{(n)}(N) \backslash \Sp_{2n}(\Q) / P_{n, r}(\Q).\]  
We begin by describing representatives for this.  For each $1 \leq r \leq n$ and each divisor $l$ of $N$ fix a matrix $\gamma^{(r)}(l) \in \Sp_{2r}(\Z)$ satisfying
\begin{equation}\label{eqn:def-of-basic-cusp-reps}\gamma^{(r)}(l) \equiv \begin{cases} \begin{pmatrix} 0_r & -1_r \\ 1_r & 0_r \end{pmatrix} \bmod l^2, \\ \begin{pmatrix} 1_r & 0_r \\ 0_r & 1_r \end{pmatrix} \bmod (N/l)^2. \end{cases}\end{equation}
This is possible since, for all $M \in \Z_{\geq 1}$, the reduction modulo $M$ map $\Sp_{2r}(\Z) \to \Sp_{2r}(\Z/M\Z)$ is surjective.  Next, given a sequence of positive integers $l_1,...,l_{n-r}$, assumed to be pairwise coprime and each a divisor of $N$, define
\begin{equation} \label{eqn:standard-cusp-reps-def}\begin{aligned} \gamma_r^{(n)}(l_{n-r},...,l_1) &= \gamma^{(n)}(l_{n-r}) \xi_{n-1, n}(\gamma^{(n-1)}(l_{n-r-1})) \hdots \xi_{r+1, n}(\gamma^{(r+1)}(l_1)) \end{aligned}\end{equation}
Set $l_0 = N/(l_{n-r} \hdots l_1)$.  To explain the ordering of the indices, first note that
\[\begin{aligned} \gamma_r^{(n)}(l_{n-r},...,l_1) \equiv \begin{pmatrix} 0_{r+i} & & -1_{r+i} & \\ & 1_{n-r-i} & & 0_{n-r-i} \\ 1_{r+i} & & 0_{r+i} & \\ & 0_{n-r-i} & & 1_{n-r-i} \end{pmatrix} \bmod l_i^2\end{aligned}\]
for $n-r \geq i \geq 1$, and $\gamma_r^{(n)}(l_{n-r},...,l_1) \equiv 1_{2n} \bmod l_0^2$.  On the other hand, write any element $\gamma$ of $\Sp_{2n}(\Q)$ as $\left(\begin{smallmatrix} A & B \\ C & D \end{smallmatrix}\right)$, and $C$ in turn as $\left(\begin{smallmatrix} C_{11} & C_{12} \\ C_{21} & C_{22} \end{smallmatrix}\right)$ with $C_{22}$ size $n-r$.  Then the under the left action of $\Gamma_0^{(n)}(N)$ and the right action of $P_{n, r}(\Q)$ we see that $\rk_p(C_{22})$, the rank of $C_{22}$ modulo $p$, for $p \mid N$ is invariant.  Going back to $\gamma = \gamma_r^{(n)}(l_{n-r},...,l_1)$ with $l_{n-r}, ...,l_{1}$ pairwise coprime divisors of $N$, we see that $\{p \mid l_i\} = \{p;\: \rk_p(C_{22}) = i\}$.  With our definition this holds with $i=0$ as well.

\begin{lemma}\label{cusp-representatives}  Continue with the above notation.  Then as $(l_{n-r},...,l_1)$ varies over all tuples of pairwise coprime positive divisors of $N$, the $\gamma_r^{(n)}(l_{n-r},...,l_1)$ describe a complete system of coset representatives for
\[\Gamma_0^{(n)}(N) \backslash \Sp_{2n}(\Q) / P_{n, r}(\Q).\]\end{lemma}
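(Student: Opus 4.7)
The plan divides naturally into pairwise inequivalence of the proposed representatives and their exhaustion of the double coset space. Inequivalence is essentially done in the paragraph preceding the lemma: the tuple $(\rk_p(C_{22}))_{p \mid N}$ is an invariant of the double coset $\Gamma_0^{(n)}(N)\gamma P_{n,r}(\Q)$, and the explicit reduction of $\gamma_r^{(n)}(l_{n-r},\ldots,l_1)$ modulo $l_i^2$ displayed there shows $\rk_p(C_{22}) = i$ precisely when $p \mid l_i$. Distinct pairwise-coprime tuples therefore give distinct double cosets.

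For exhaustion I would first reduce to $\Sp_{2n}(\Z)$ and then to $\Sp_{2n}(\Z/N\Z)$. The class-number-one statement $\Sp_{2n}(\Q) = \Sp_{2n}(\Z)\,P_{n,r}(\Q)$ (equivalently, any rank-$(n-r)$ totally isotropic subspace of $\Q^{2n}$ is an $\Sp_{2n}(\Z)$-translate of the standard one, by Witt extension) shows every double coset meets $\Sp_{2n}(\Z)$. Since the reduction $\Sp_{2n}(\Z) \to \Sp_{2n}(\Z/N\Z)$ is surjective with kernel contained in $\Gamma_0^{(n)}(N)$, the question reduces to classifying double cosets for the image of $\Gamma_0^{(n)}(N)$ against the image of $\Sp_{2n}(\Z) \cap P_{n,r}(\Q)$ inside $\Sp_{2n}(\Z/N\Z)$. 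The Chinese Remainder Theorem, combined with squarefreeness of $N$, factors this as a product $\prod_{p \mid N} P_{n,0}(\mathbb{F}_p) \backslash \Sp_{2n}(\mathbb{F}_p) / P_{n,r}(\mathbb{F}_p)$, using that the image of $\Gamma_0^{(n)}(N)$ in $\Sp_{2n}(\mathbb{F}_p)$ is the Siegel parabolic $P_{n,0}(\mathbb{F}_p)$.

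The main obstacle, to my mind, is the prime-local claim that each factor has exactly $n-r+1$ elements, parametrised bijectively by $\rk(C_{22}) \in \{0,1,\ldots,n-r\}$. The cleanest approach is Bruhat decomposition: the double cosets of two standard parabolics in $\Sp_{2n}(\mathbb{F}_p)$ are in bijection with $W_{P_{n,0}} \backslash W / W_{P_{n,r}}$, where $W = S_n \ltimes (\Z/2\Z)^n$ is the hyperoctahedral Weyl group of $\Sp_{2n}$, $W_{P_{n,0}} = S_n$, and $W_{P_{n,r}} = S_{n-r} \times (S_r \ltimes (\Z/2\Z)^r)$; a direct combinatorial count yields exactly $n-r+1$ classes, distinguished by the number of sign changes occurring on the index set $\{1,\ldots,n-r\}$, which corresponds to $\rk(C_{22})$. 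A more hands-on alternative reduces the block $C_{22}$ to canonical form $\diag(1_i, 0_{n-r-i})$ via row operations from $P_{n,0}(\mathbb{F}_p)$ and column operations from $P_{n,r}(\mathbb{F}_p)$, and then uses the symplectic relations to normalise the remaining blocks to match the explicit mod-$l_i^2$ reduction of $\gamma_r^{(n)}$ displayed before the lemma. Stitching the prime-local classifications back together produces, for any $\gamma \in \Sp_{2n}(\Q)$, a unique pairwise coprime tuple $(l_{n-r},\ldots,l_1)$ with $\gamma \in \Gamma_0^{(n)}(N)\,\gamma_r^{(n)}(l_{n-r},\ldots,l_1)\,P_{n,r}(\Q)$, completing the proof.
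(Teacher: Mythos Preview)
Your argument is correct and more complete than the paper's own proof. For inequivalence you both use the same idea: the tuple $(\rk_p(C_{22}))_{p \mid N}$ is a double-coset invariant separating the proposed representatives. For exhaustion, however, the paper simply remarks that one can push the rank argument further, and as an alternative observes that the number of proposed representatives, namely $(n-r+1)^t$ where $t$ is the number of prime divisors of $N$, matches the cardinality computed in \cite{BoechererSchulze-Pillot1991} Lemma 8.1.

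Your route is genuinely different and self-contained: reduce to $\Sp_{2n}(\Z)$ via class-number-one, pass to $\Sp_{2n}(\Z/N\Z)$ by strong approximation, factor over primes, and then count $P_{n,0}(\mathbb{F}_p) \backslash \Sp_{2n}(\mathbb{F}_p) / P_{n,r}(\mathbb{F}_p)$ either by Bruhat combinatorics or by direct normalisation of the $C_{22}$ block. This buys you a proof with no external citation, and moreover explains structurally why the answer is $(n-r+1)^t$: it is a product of local Weyl-group double-coset counts. The paper's approach is quicker to write but leans on \cite{BoechererSchulze-Pillot1991} for the actual work. One small point of care in your Bruhat computation: when you reduce $\sigma$ to the identity via the left $S_n$-action you must track the residual stabiliser (those $\tau \in S_n$ with $\tau(\epsilon) = \epsilon$ up to the right action), but the final orbit count on $(\Z/2\Z)^{n-r}$ under $S_{n-r}$ is indeed $n-r+1$, indexed by the number of nontrivial entries, which matches $\rk_p(C_{22})$.
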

\begin{proof}  By the discussion preceding the statement of the lemma we see that the $\gamma_r^{(n)}(l_{n-r},...,l_1)$ are inequivalent for distinct tuples $(l_{n-r},...,l_1)$.  One can argue further form these rank observations to see that the $\gamma_r^{(n)}(l_{n-r},...,l_1)$ actually form a complete set of representatives.  Alternatively this follows since they agree in number with those of \cite{BoechererSchulze-Pillot1991} Lemma 8.1.  \end{proof}

Henceforth we shall identify a cusp of $\Gamma_0^{(n)}(N) \backslash \mathfrak{H}_n^*$ with the corresponding tuple $(l_{n-r},...,l_1)$ of pairwise coprime positive divisors of $N$.  With $\gamma = \gamma(l_{n-r},...,l_1)$ one sees that
\[\Gamma_\gamma^{(r)} = \Gamma^{(r)}_0(l_0, l_{n-r}\hdots l_1),\]
where
\[\Gamma^{(r)}_0(N_1, N_2) = \left\{\begin{pmatrix} A & B \\ C & D \end{pmatrix};\: C \equiv 0 \bmod N_1;\: B \equiv 0 \bmod N_2\right\}.\]

\begin{remark}\label{rmk:boundary-could-be-gamma0N}  The group $\Gamma^{(r)}_0(N_1, N_2)$ is conjugate to the group $\Gamma^{(r)}_0(N_1N_2)$, so the space of modular forms on the boundary components is isomorphic to the space of modular forms on $\Gamma_0^{(r)}(N)$.  This is related to \eqref{eqn:lowering-dependency-on-rep} and Remark \ref{rmk:boundary-component-depends-on-rep}.  In fact, it is not difficult to see that one can also choose the representatives so that one manifestly has boundary components of the form $\Gamma_0^{(r)}(N)$.  The representatives we have chosen are convenient for the present computations; we will work with a slight modification of them in \S\ref{sctn:intertwining-relations-any-cusp} and \S\ref{action-of-hecke-on-eis} which will be well-suited to studying modular forms on the boundary components.\end{remark}

We now describe the intersections between these boundary components.  Of course, in contrast to the issues raised in Remark \ref{rmk:boundary-could-be-gamma0N}, this is purely a question about the double cosets and the result of this computation does not depend on the choice of representatives we have made.

\begin{theorem}\label{thm:description-of-boundary}  Let $n$ be a positive integer, let $N$ a square-free positive integer, and let $\Gamma_0^{(n)}(N) \backslash \mathfrak{H}_n^*$ the Satake compactification of $\Gamma_0^{(n)}(N) \backslash \mathfrak{H}_n$.
\begin{enumerate} \item  Let $(l_{n-r},...,l_1)$ be an $r$-cusp represented by $\gamma$ as above, and let $0 \leq s \leq r$.  Consider two $s$-cusps on the Satake compactification $\Gamma_\gamma^{(r)} \backslash \mathfrak{H}_r^*$ of the boundary component corresponding to $(l_{n-r},...,l_1)$.  If these two $s$-cusps are equal when viewed as $s$-cusps of $\Gamma_0^{(n)}(N) \backslash \mathfrak{H}_n^*$, then they are also equal when viewed as $s$-cusps of $\Gamma_0^{(r)}(N) \backslash \mathfrak{H}_r^*$.  In other words, no $r$-cusp can self-intersect at an $s$-cusp.
\item  Let $(l_{n-s},...,l_1)$ be an $s$-cusp, where $0 \leq s < n-1$.  Then the $(s+1)$-cusps on which $(l_{n-s},...,l_1)$ lies are precisely those of the form 
\[\left(l_{n-s}c_{n-s-1}, \frac{l_{n-s-1}}{c_{n-s-1}}c_{n-s-2}, \frac{l_{n-s-2}}{c_{n-s-2}} c_{n-s-3},...,\frac{l_2}{c_2} c_1\right),\]
where, for $1 \leq i \leq n-s-1$, $c_i \mid l_i$. \end{enumerate}
\end{theorem}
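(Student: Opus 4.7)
Both parts rely on the invariant already exploited in Lemma~\ref{cusp-representatives}: a double coset $\Gamma_0^{(n)}(N)\, g\, P_{n,r}(\Q)$ is determined by the ranks modulo each prime $p \mid N$ of the bottom-right $(n-r)\times(n-r)$ block of the $C$-part of any representative $g$, with $p \mid l_i$ iff this rank equals $i$ (taking $p \mid l_0$ to mean rank zero).

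For part (1), suppose $\gamma_0\gamma\xi_{r,n}(\rho)p_0 = \gamma\xi_{r,n}(\rho')$ with $\gamma_0 \in \Gamma_0^{(n)}(N)$ and $p_0 \in P_{n,s}(\Q)$. Since $s \leq r$ we have $P_{n,s}(\Q) \subseteq P_{n,r}(\Q)$, and rearranging gives $\gamma^{-1}\gamma_0\gamma = \xi_{r,n}(\rho')\,p_0^{-1}\,\xi_{r,n}(\rho)^{-1} \in P_{n,r}(\Q)$, so $\omega_{n,r}(\gamma^{-1}\gamma_0\gamma) \in \Gamma^{(r)}_\gamma$. Applying $\omega_{n,r}$ to the original equation and using $\omega_{n,r}\circ\xi_{r,n} = \mathrm{id}_{\Sp_{2r}}$ yields
\[\omega_{n,r}(\gamma^{-1}\gamma_0\gamma)\cdot \rho\cdot \omega_{n,r}(p_0) = \rho'.\]
An inspection of the block form \eqref{eqn:Pnr-definition} confirms $\omega_{n,r}(p_0) \in P_{r,s}(\Q)$, whence $\rho$ and $\rho'$ define the same class in $\Gamma^{(r)}_\gamma \bs \Sp_{2r}(\Q)/P_{r,s}(\Q)$.

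For part (2), set $\gamma' = \gamma_{s+1}^{(n)}(l'_{n-s-1},\ldots,l'_1)$. By part~(1) applied at the $(s+1)$-level, the $s$-cusps of $\Gamma^{(s+1)}_{\gamma'} \bs \mathfrak{H}_{s+1}^*$ are parameterized by a single divisor $c \mid N$, with a representative essentially of the form $\rho' = \gamma^{(s+1)}(c)$ (modulo the conjugation implicit in the level of $\Gamma^{(s+1)}_{\gamma'}$, which does not affect mod-$p$ ranks). Working modulo a prime $p \mid N$, $\gamma'$ reduces either to $1_{2n}$ (when $p \mid l'_0$) or to the swap $\sigma_{s+1+i,n}$ (when $p \mid l'_i$ for some $1 \leq i \leq n-s-1$), while $\xi_{s+1,n}(\rho')$ reduces to $1_{2n}$ if $p \nmid c$ and to $\sigma_{s+1,n}$ if $p \mid c$. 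A direct block-matrix computation with the $C$-parts of $\sigma_{m,n}$ and of $\sigma_{s+1+i,n}\sigma_{s+1,n}$ yields the rank of the bottom-right $(n-s)\times(n-s)$ block of the $C$-part of $\gamma'\xi_{s+1,n}(\rho')$ modulo $p$: it equals $0$ when $p \mid l'_0,\ p \nmid c$; $1$ when $p \mid l'_0,\ p \mid c$; $i+1$ when $p \mid l'_i,\ p \nmid c$; and $i$ when $p \mid l'_i,\ p \mid c$ (for $1\leq i \leq n-s-1$).

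Inverting this dictionary: primes of $l_0$ arise only from $(p \mid l'_0,\ p \nmid c)$; primes of $l_{n-s}$ only from $(p \mid l'_{n-s-1},\ p\nmid c)$; and primes of $l_j$ for $1 \leq j \leq n-s-1$ from $p \mid c$ combined with a choice between $p \mid l'_{j-1}$ and $p \mid l'_j$. These compatibility constraints force $c = l_1 l_2 \cdots l_{n-s-1}$, and the remaining freedom is, independently for each prime of $l_j$ ($1 \leq j \leq n-s-1$), a binary choice of whether to place it in $l'_{j-1}$ or $l'_j$. Encoding these choices via divisors $c_j \mid l_j$ (with $c_j$ the product of primes of $l_j$ placed in $l'_j$, and $c_{n-s} := 1$) one recovers $l'_j = (l_{j+1}/c_{j+1})\,c_j$ for $1\le j \le n-s-1$, together with $l'_0 = l_0 l_1 / c_1$, matching the formula. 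Pairwise coprimality of the $l'_j$'s is immediate from the construction (primes of $l'_j$ lie in $l_j \cup l_{j+1}$ and the contributions to adjacent $l'_j$'s from a common $l_{j+1}$ split disjointly into $c_{j+1}$ and $l_{j+1}/c_{j+1}$). The main technical obstacle is the block-matrix rank table for the four cases above; once this is in hand, the combinatorial inversion is routine.
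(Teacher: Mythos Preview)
Your argument for Part~1 has a genuine gap: the containment $P_{n,s}(\Q)\subseteq P_{n,r}(\Q)$ is false for $s<r<n$. These are distinct \emph{maximal} parabolic subgroups of $\Sp_{2n}$, hence never nested. Concretely, for $n=2$, $r=1$, $s=0$, the Siegel parabolic $P_{2,0}$ contains $\left(\begin{smallmatrix}0&1&&\\1&0&&\\&&0&1\\&&1&0\end{smallmatrix}\right)$, which does not lie in the Klingen parabolic $P_{2,1}$ (whose $A$-block must be lower-triangular). Consequently you cannot conclude $\gamma^{-1}\gamma_0\gamma\in P_{n,r}(\Q)$ from the displayed rearrangement, $\omega_{n,r}$ need not be defined on $p_0$, and the whole chain collapses. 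The paper instead proves Part~1 by the rank-invariant route: it computes $\rk_p(C_{22})$ for the product $\gamma_r^{(n)}(l_{n-r},\ldots,l_1)\,\xi_{r,n}(\gamma_s^{(r)}(m_{r-s},\ldots,m_1))$ and checks that, with the $l_i$ fixed, the resulting rank vector determines the $m_j$ uniquely.

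For Part~2 your approach is essentially the paper's (compute the mod-$p$ rank table and invert it), and your final parametrisation is correct, but the inversion as you describe it is garbled. From your own table, a prime $p\mid l_j$ with $1\le j\le n-s-1$ has the two options $(p\mid l'_{j-1},\ p\nmid c)$ or $(p\mid l'_j,\ p\mid c)$; the binary choice of target $l'_{j-1}$ versus $l'_j$ is \emph{the same datum} as $p\nmid c$ versus $p\mid c$, not independent of it. Hence $c$ is not forced to equal $l_1\cdots l_{n-s-1}$; rather $c=c_1\cdots c_{n-s-1}$ with $c_j\mid l_j$ exactly the product of primes of $l_j$ placed in $l'_j$. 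Once this is corrected the derivation of $l'_j=(l_{j+1}/c_{j+1})c_j$ goes through as you indicate.
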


\begin{remark}  Let $0 \leq s < r < n$.  Part 2 of Theorem \ref{thm:description-of-boundary} can be applied inductively to describe which $r$-cusps an arbitrary $s$-cusp lies on.  Alternatively, enough ingredients will be given in the proof of Theorem \ref{thm:description-of-boundary} to describe this in general, although we omit it since it is notationally cumbersome.\end{remark}

Before proving Theorem \ref{thm:description-of-boundary} let us demonstrate the consistency of the numbers in it, since it may not be immediately obvious that this is the case.  We will count $s$-cusps with the multiplicity: More precisely, we count each $s$-cusps once for every $(s+1)$-cusps on which it appears.  Let $t$ denote the number of prime divisors of the squarefree integer $N$.  On the one hand the number of $(s+1)$-cusps of $\Gamma_0^{(n)}(N) \backslash \mathfrak{H}_n^*$ is the number of tuples $(l_{n-(s+1)}, ..., l_1)$ of pairwise coprime positive divisors of $N$, of which there are $(n-(s+1)+1)^t=(n-s)^t$; on each of these cusps the number of $s$-cusps is $((s+1)-s+1)^t = 2^t$, so the number of $s$-cusps with multiplicity is $2^t(n-s)^t$.\\

On the other hand, suppose we fix an $s$-cusp $(l_{n-s}',...,l_1')$.   Let us write $\epsilon_{i}$ for the number of prime divisors of $l_i'$.  Part 2 of Theorem \ref{thm:description-of-boundary} tells us that the number of $(s+1)$-cusps on which $(l_{n-s}',...,l_1')$ lies is $2^{\epsilon_{n-s-1}} \hdots 2^{\epsilon_2} 2^{\epsilon_1}$.  Write also $\delta_i = \sum_{j=1}^i \epsilon_j$ for the number of prime divisors of $l_i' \hdots l_1'$, so that $\epsilon_i = \delta_i - \delta_{i-1}$ for $i>1$.  Then the number of $s$-cusps counted with multiplicity according to the number of $(s+1)$-cusps on which they appear is
\[\sum_{\delta_{n-s} = 0}^t {t \choose \delta_{n-s}} \sum_{\delta_{n-s-1}=0}^{\delta_{n-s}} {\delta_{n-s} \choose \delta_{n-s-1}} \hdots \sum_{\delta_1=0}^{\delta_2} {\delta_2 \choose \delta_1} 2^{\delta_{n-s-1} - \delta_{n-s-2}} \hdots 2^{\delta_2 -\delta_1} 2^{\delta_1} = 2^t(n-s)^t,\]
by repeatedly applying the binomial theorem.

\begin{proof}  [Proof of Theorem \ref{thm:description-of-boundary}]  First note that if $(l_{n-r},...,l_1)$ is an $r$-cusp of $\Gamma_0^{(n)}(N) \backslash \mathfrak{H}_n^*$ and $(m_{r-s},...,m_1)$ is an $s$-cusp on this $r$-cusp then viewed inside $\Gamma_0^{(n)}(N) \backslash \mathfrak{H}_n^*$ this $s$-cusp is represented by the matrix
\[\begin{aligned}&\gamma_r^{(n)}(l_{n-r},...,l_1)\xi_{r, n}(\gamma_s^{(r)}(m_{r-s},...,m_1)) \\
&\qquad= \gamma^{(n)}(l_{n-r}) \xi_{n-1, n}(\gamma^{(n-1)}(l_{n-r-1})) \hdots \xi_{r+1, n}(\gamma^{(r+1)}(l_1)) \\
&\qquad\qquad \times \xi_{r, n}(\gamma^{(r)}(m_{r-s})) \xi_{r-1, r}(\gamma^{(r-1)}(m_{r-s-1})) \hdots \xi_{s+1, r}(\gamma^{(s+1)}(m_1)).\end{aligned}\]
This is of course not one of our representatives.  To determine this as an $s$-cusp of $\Gamma_0^{(n)}(N) \backslash \mathfrak{H}_n^*$ is to determine which coset it is in in the space $\Gamma_0^{(n)}(N) \backslash \Sp_{2n}(\Q) / P_{n, s}(\Q)$, which is simply to determine the rank of the $C_{22}$ block (of size $n-s$) of the above matrix modulo $p$ for each $p \mid N$.  We write $l_0 = N/(l_{n-r} \hdots l_1)$ and $m_0 = N/(m_{s-r} \hdots m_1)$.  By multiplying out in the expression for $\gamma_r^{(n)}(l_{n-r},...,l_1)\xi_{r, n}(\gamma_s^{(r)}(m_{r-s},...,m_1))$ one sees that
\begin{itemize}
\item  if $p \mid m_0$ and $p \mid l_0$ then $\rk_p(C_{22}) = 0$,
\item  if $p \mid m_0$ and $p \mid l_i$ where $n-r \geq i \geq 1$ then $\rk_p(C_{22}) = r-s+i$,
\item  if $p \mid m_j$ where $r-s \geq j \geq 1$ and $p \mid l_0$ then $\rk_p(C_{22}) = j$,
\item  if $p \mid m_j$ where $r-s \geq j \geq 1$ and $p \mid l_i$ where $n-r \geq i \geq 1$ then $\rk_p(C_{22}) = (r-s+i) -j$. 
\end{itemize}
This is enough to deduce Part 1.  Indeed, let $(m_{r-s}',...,m_1')$ be another $s$-cusp on $(l_{n-r},...,l_1)$, and let $C_{22}'$ be the corresponding block of size $(n-s)$.  We assume that this $s$-cusp when viewed inside $\Gamma_0^{(n)}(N) \backslash \mathfrak{H}_n^*$ is the same as the one coming from $(m_{r-s},...,m_1)$; equivalently $\rk_p(C_{22}) = \rk_p(C_{22}')$ for all $p \mid N$.  We claim that this implies $(m_{r-s},...,m_1) = (m_{r-s}',...,m_1')$.  Define $m_0' = N/(m_{r-s}' \hdots m_1')$.  We will prove that $\{p \mid m_i\}$ and $\{p \mid m_i'\}$ are the same; this is sufficient because everything is squarefree.  Take a divisor $p$ of $N$, and assume first that $p \mid l_0$.  From the above criteria we have under this assumption that, for $r-s \geq j \geq 0$,
\[p \mid m_j \iff \rk_p(C_{22}) = j \iff \rk_p(C_{22}') = j \iff p \mid m_j'.\]
Now assume that $p \mid l_i$ where $n-r \geq i \geq 1$.  Again from the above criteria we have, for $r-s \geq j \geq 0$,
\[p \mid m_j \iff \rk_p(C_{22}) = r-s-j \iff \rk_p(C_{22}') = r-s-j \iff p \mid m_j'.\]
Since every $p \mid N$ divides some $l_i$, this proves Part 1.  In fact, we see that if $(l_{n-r},...,l_1)$ is an $r$-cusp, and $(m_{r-s},...,m_1)$ is an $s$-cusp on it, then the $s$-cusp when viewed inside $\Gamma_0^{(n)}(N) \backslash \mathfrak{H}_n^*$ is $(l'_{n-s}, ..., l_1')$ where, for $r-s < i \leq n-s$,
\[l'_i = (m_{r-s}, l_i) (m_{r-s-1}, l_{i-1}) \hdots (m_1, l_{i-(r-s-1)}) (m_0, l_{i-(r-s)}),\]
and for $1 \leq i \leq r-s$,
\[l'_i = (m_{r-s}, l_i) (m_{r-s-1}, l_{i-1}) \hdots (m_{r-s-(i-1)}, l_1) (m_i,l_0).\]
In the above formulas, if we refer to $(l_a, m_b)$ where either $l_a$ or $m_b$ is not defined (e.g. $a \leq -1$ or $a \geq n-r+1$) then we understand that $(l_a, m_b)$ should be omitted from the product.\\

In order to prove Part 2 we must start with an $(s+1)$-cusp, say $(d_{n-s-1}, ,...,d_1)$, and exhibit an $s$-cusp $m_1$ on this which is equal to $(l_{n-s},...,l_1)$, when viewed inside $\Gamma_0^{(n)}(N) \backslash \mathfrak{H}_n$.  Following the recipe above, where we are taking $r = s+1$, we see that we must exhibit $(m_1, m_0)$ with $m_1 m_0 = N$ such that
\begin{align*} l_{n-s} &= &&(m_0, d_{n-s-1}) && \\
l_{n-s-1} &= &&(m_0, d_{n-s-2}) &&(m_1, d_{n-s-1}) \\
l_{n-s-2} &= &&(m_0, d_{n-s-3}) &&(m_1, d_{n-s-2}) \\
& &&\qquad\vdots && \\
l_3 &= &&(m_0, d_2) &&(m_1, d_3) \\
l_2 &= &&(m_0, d_1) &&(m_1, d_2) \\
l_1 &= && (m_1, d_0) && (m_1, d_1)\\
l_0 &= && &&(m_0, d_0).\end{align*}
If $d_{n-s-1} = l_{n-s} c_{n-s-1}$ and $d_i = (l_{i+1}/c_{i+i})c_i$ for $n-s-2 \geq i \geq 1$ as in the statement of Part 2 then we take
\[\begin{aligned} m_0 &= l_{n-s} \cdot \frac{l_{n-s-1}}{c_{n-s-1}} \cdot \frac{l_{n-s-2}}{c_{n-s-2}} \cdots \frac{l_3}{c_3} \cdot \frac{l_2}{c_2} \cdot 1 \cdot l_0, \\
m_1 &= 1 \cdot c_{n-s-1} \cdot c_{n-s-2} \cdots c_3 \cdot c_2 \cdot l_1 \cdot 1; \end{aligned}\]
this is written so as to emphasize which primes of $l_i$ are in $m_0$ and $m_1$ respectively.  To finish it remains to show, given $(l_{n-s},...,l_1)$, that if we have an $(s+1)$-cusp $(d_{n-s-1},...,d_1)$ which satisfies the above system equations (for some $m_1$) then it must be of the form stated in Part 2 of the Theorem.  Now examining the equations for $l_{n-s}$ and $l_{n-s-1}$ we see that $d_{n-s-1}$ must be a multiple of $l_{n-s}$ which divides $l_{n-s}l_{n-s-1}$, so $d_{n-s-1} = l_{n-s} c_{n-s-1}$ for some $c_{n-s-1} \mid l_{n-s-1}$.  Next examining the equations for $l_{n-s-1}$ and $l_{n-s-2}$ we see that $d_{n-s-2}$ must be a multiple of $l_{n-s-1}/c_{n-s-1}$ which divides $(l_{n-s-1}/c_{n-s-1})l_{n-s-2}$, so $d_{n-s-2} = (l_{n-s-1}/c_{n-s-1})c_{n-s-2}$ for some $c_{n-s-2} \mid l_{n-s-2}$.  This pattern continues all the way up to $d_1$, and we see that it is necessary that $(d_{n-s-1},...,d_1)$ has the form stated in Part 2 of the Theorem.  Since we've already seen that this is sufficient we are done. \end{proof}

\section{Intertwining relations at arbitrary cusps for squarefree level}\label{sctn:intertwining-relations-any-cusp}

We continue with the imposition that $N$ be squarefree.  In this section we will prove Theorem \ref{thm:any-cusp-relation}.  In the following section we will show how the can be used to obtain information on the action of Hecke operators on Klingen--Eisenstein series.\\

Write 
\[\kappa^{(n)}(l) = \begin{pmatrix} 1_n & 0_n \\ 0_n & l 1_n \end{pmatrix} \gamma^{(n)}(l)\]
where $\gamma^{(n)}(l)$ is as in (\ref{eqn:def-of-basic-cusp-reps}), so that
\[\kappa^{(n)}(l) \equiv \begin{cases}\begin{pmatrix} 0_n & -1_n \\ l 1_n & 0_n \end{pmatrix} \bmod l^2 \\ \begin{pmatrix} 1_n & 0_n \\ 0_n & l 1_n \end{pmatrix} \bmod (N/l)^2.\end{cases}\]
As $l$ varies over all positive divisors of $N$ the $\kappa(l)$ represent the double coset space 
\[\Gamma^{(n)}_0(N) \backslash \GSp_{2n}(\Q) / P_{n, n-1}^*(\Q),\]
where $P_{n, r}^*$ is the parabolic subgroup of $\GSp_{2n}$ which contains $P_{n, r}$ (i.e. the similtudes preserving the same flag).  The inclusion induces a bijection
\[\Gamma^{(n)}_0(N) \backslash \Sp_{2n}(\Q) / P_{n, r}(\Q) \simeq \Gamma^{(n)}_0(N) \backslash \GSp_{2n}(\Q) / P_{n, r}^*(\Q),\]
so that the $\kappa^{(n)}(l)$ are in bijection with the $(n-1)$-cusps of $\Gamma_0^{(n)}(N) \backslash \mathfrak{H}_n^*$.  An easy computation shows that
\[\kappa(l)^{-1} \Gamma_0^{(n)}(N) \kappa(l) = \Gamma_0^{(n)}(N),\]
and that the map $f \mapsto f |_k \kappa(l)$ defines an isomorphism $\mathcal{M}_k^{(n)}(N, \chi) \to \mathcal{M}_k^{(n)}(N, \overline{\chi_l} \chi_{N/l})$.  For a $l$ positive divisor of $N$ we write $\Phi_l$ for the operator defined by
\[\Phi_l(F) = \Phi(F |_k \kappa(l)),\]
so $\Phi_1 = \Phi$.

\begin{remark}\label{rmk:similtude-lowering-not-well-defined}  As in \eqref{eqn:lowering-dependency-on-rep}, this definition depends on the choice of representative.  More precisely, if $\gamma' \in \Gamma^{(n)}$ and $\delta \in P_{n, n-1}^*(\Q)$,
\[\Phi(F |_k \gamma' \kappa \delta) = D_{22}^{-k} \chi(\gamma') \mu_n(\delta)^{(n-r)k/2} \Phi(F | \kappa) | \omega_{n, n-1}(\delta).\]\end{remark}

As a map of vector space we have $\Phi_l : \mathcal{M}_k^{(n)}(N, \chi) \to \mathcal{M}_k^{(n-1)}(N, \overline{\chi}_l \chi_{N/l})$.  In terms of the Hecke module structure at primes not dividing the level we have the following:

\begin{lemma}\label{commutative-diagrams-lemma}  Let $n$ and $k$ be positive integers, $N$ a squarefree positive integer, and $p$ a prime not dividing $N$.  For $l \mid N$ let $\kappa(l)$ be as above.  Then we have the following commutative diagrams:
\[\begin{CD} \mathcal{M}_k^{(n)}(N, \chi) @>T^{(n)}(p, \chi)>> \mathcal{M}_k^{(n)}(N, \chi)\\
@V |_k \kappa(l) VV @VV |_k \kappa(l) V\\
\mathcal{M}_k^{(n)}(N, \overline{\chi_l} \chi_{N/l}) @>>\chi_l(p^n)T^{(n)}(p, \overline{\chi_l}\chi_{N/l})> \mathcal{M}_k^{(n)}(N, \overline{\chi_l}\chi_{N/l}), \end{CD}\]
and for $1 \leq j \leq n$
\[\begin{CD} \mathcal{M}_k^{(n)}(N, \chi) @>T_j^{(n)}(p^2, \chi)>> \mathcal{M}_k^{(n)}(N, \chi)\\
@V |_k \kappa(l) VV @VV |_k \kappa(l) V\\
\mathcal{M}_k^{(n)}(N, \overline{\chi_l} \chi_{N/l}) @>>\chi_l(p^{2n}) T_j^{(n)}(p^2, \overline{\chi_l}\chi_{N/l})> \mathcal{M}_k^{(n)}(N, \overline{\chi_l}\chi_{N/l}).\end{CD}\]\end{lemma}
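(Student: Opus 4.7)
The plan is to expand both sides of each diagram using the coset definition of the Hecke operators and reduce the claim to a pointwise character identity. Let $\Gamma = \Gamma_0^{(n)}(N)$, and let $\alpha\in\Delta_0^{(n)}(N)$ be the matrix defining either $T^{(n)}(p,\chi)$ or $T_j^{(n)}(p^2,\chi)$, so $\mu(\alpha)=p$ or $p^2$ respectively. Fix a coset decomposition $\Gamma\alpha\Gamma=\bigsqcup_v\Gamma\alpha_v$ and set $\alpha'_v:=\kappa(l)^{-1}\alpha_v\kappa(l)$. Using the factorisation $\kappa(l)=\bigl(\begin{smallmatrix}1_n & 0\\ 0 & l1_n\end{smallmatrix}\bigr)\gamma^{(n)}(l)$, a direct computation gives
\[\alpha'_v = \gamma^{(n)}(l)^{-1}\begin{pmatrix} A_v & lB_v \\ l^{-1}C_v & D_v \end{pmatrix}\gamma^{(n)}(l),\]
which is integral since $C_v\equiv 0\bmod N$ forces $l^{-1}C_v\in\Z^{n\times n}$; reducing modulo $l^2$ (where $\gamma^{(n)}(l)\equiv J$) and modulo $(N/l)^2$ (where $\gamma^{(n)}(l)\equiv 1_{2n}$) and combining via CRT shows moreover that $\alpha'_v\in\Delta_0^{(n)}(N)$.

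The central algebraic claim is that $\{\alpha'_v\}_v$ is itself a complete system of coset representatives for $\Gamma\alpha\Gamma$, equivalently that $\Gamma\alpha''\Gamma=\Gamma\alpha\Gamma$ where $\alpha'':=\kappa(l)^{-1}\alpha\kappa(l)$. Because $p\nmid N$, both $l$ and $N/l$ are units in $\Z_p$, so $\kappa(l)\in\GSp_{2n}(\Z_p)$; hence conjugation by $\kappa(l)$ acts trivially on the local spherical Hecke algebra at $p$, and the standard fact that for $p\nmid N$ the natural map from $\mathcal{H}_p^{(n)}(N)$ to this local algebra is injective on double cosets yields the claim. Combined with $\kappa(l)^{-1}\Gamma\kappa(l)=\Gamma$, it follows that $\Gamma\alpha\Gamma=\bigsqcup_v\Gamma\alpha'_v$ is another coset decomposition.

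Expanding $(F|T)|\kappa(l)$ and using the identity $(F|\alpha_v)|\kappa(l)=(F|\kappa(l))|\alpha'_v$, both sides of the diagram become sums indexed by the same $\{\alpha'_v\}$, and commutativity reduces to the pointwise identity
\[\chi(\det A_v)=\chi_l(\mu(\alpha)^n)\cdot(\overline{\chi_l}\chi_{N/l})(\det A'_v),\]
with $A_v,A'_v$ the $A$-blocks of $\alpha_v,\alpha'_v$. Using $\chi=\chi_l\chi_{N/l}$ this splits via CRT into two congruences: modulo $N/l$, the formula for $\alpha'_v$ gives $A'_v\equiv A_v$ (since $l$ is a unit there), which cancels the $\chi_{N/l}$-parts; modulo $l$, the same formula with $\gamma^{(n)}(l)\equiv J$ gives $A'_v\equiv D_v$, and the symplectic similitude relation $A_v{}^tD_v - B_v{}^tC_v=\mu(\alpha)1_n$ together with $C_v\equiv 0\bmod l$ gives $\det A_v\det D_v\equiv\mu(\alpha)^n\bmod l$. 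The factor $\chi_l(\mu(\alpha)^n)$ specialises to $\chi_l(p^n)$ and $\chi_l(p^{2n})$ in the two cases, matching the statement.

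The main obstacle is the double-coset equality $\Gamma\alpha''\Gamma=\Gamma\alpha\Gamma$: a priori $\gamma^{(n)}(l)\notin\Gamma$, so this is not automatic, and a purely elementary verification would require an explicit (and unilluminating) construction of conjugating elements of $\Gamma$. The local-global argument sketched above circumvents this by exploiting that the level structure is invisible at the prime $p\nmid N$.
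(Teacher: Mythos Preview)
Your proof is correct and follows essentially the same route as the paper: decompose the double coset, conjugate the representatives by $\kappa(l)$, and compare characters termwise using $\det(A_v)\det(D_v)\equiv\mu(\alpha)^n\bmod N$. The paper's write-up is slightly more compressed on the character side (it works directly with $\overline{\chi}_l(\det D_v)$ rather than computing the $A$-block of $\alpha'_v$ mod $l$ and mod $N/l$ separately), but the underlying identity is the same.

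Where you differ is in the step $\Gamma\alpha\Gamma=\bigsqcup_v\Gamma\kappa(l)^{-1}\alpha_v\kappa(l)$. The paper derives this in one line from the normalizer property $\kappa(l)^{-1}\Gamma\kappa(l)=\Gamma$; strictly speaking that only gives $\Gamma(\kappa(l)^{-1}\alpha\kappa(l))\Gamma=\bigsqcup_v\Gamma\kappa(l)^{-1}\alpha_v\kappa(l)$, and the equality $\Gamma\kappa(l)^{-1}\alpha\kappa(l)\Gamma=\Gamma\alpha\Gamma$ is being taken as understood. You identified this as the non-trivial point and supplied a clean justification via the local Hecke algebra at $p$ (using $\kappa(l)\in\GSp_{2n}(\Z_p)$ since $p\nmid N$, together with the standard bijection between $\mathcal{H}_p^{(n)}(N)$ and the spherical Hecke algebra at a good prime). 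This is a genuine improvement in rigour over the paper's presentation, though the paper's author would presumably regard the fact as well known.
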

\begin{proof}  We shall show commutativity of the first diagram using an argument based on \cite{Miyake2006} Theorem 4.5.5; the second will yield to similar reasoning.  \\

Write $\alpha = \left(\begin{smallmatrix} 1_n & \\ & p1_n \end{smallmatrix}\right)$, so that $T(p, \chi)$ is given by the double coset $\Gamma_0^{(n)}(N) \alpha \Gamma_0^{(n)}(N)$.  Note that $\mu(\alpha) = p$.  Write 
\begin{equation}\label{eqn:Tp-rep-names} \Gamma_0^{(n)}(N) \alpha \Gamma_0^{(n)}(N) = \bigsqcup_v \Gamma_0^{(n)}(N) \alpha_v.\end{equation}  
Since $\kappa(l)^{-1} \Gamma_0^{(n)}(N) \kappa(l) = \Gamma_0^{(n)}(N)$ we also have 
\begin{equation}\label{eqn:Tp-rep-names-conj}\Gamma_0^{(n)}(N) \alpha \Gamma_0^{(n)}(N) = \bigsqcup_v \Gamma_0^{(n)}(N) \kappa(l)^{-1} \alpha_v \kappa(l).\end{equation}  
Now take $F \in \mathcal{M}_k^{(n)}(N, \overline{\chi}_l \chi_{N/l})$, then
\[F | \kappa(l)^{-1} | T(p, \chi) | \kappa(l) = p^{\frac{nk}{2} - \frac{n(n+1)}{2}}\sum_v \overline{\chi}(\alpha_v) F | \kappa(l)^{-1} \alpha_v \kappa(l)\]
where we have chosen the decomposition (\ref{eqn:Tp-rep-names}) for our definition of $T(p, \chi)$.  Writing $\alpha_v = \left(\begin{smallmatrix} A_v & B_v \\ C_v & D_v \end{smallmatrix}\right)$ we have $\overline{\chi}(\alpha_v) = \chi(\det(A_v))$, so
\begin{equation}\label{top-path-of-cd} F | \kappa(l)^{-1} | T(p, \chi) | \kappa(l) = p^{\frac{nk}{2} - \frac{n(n+1)}{2}}\sum_v \chi(\det(A_v)) F | \kappa(l)^{-1} \alpha_v \kappa(l).\end{equation}
On the other hand, 
\[F | T(p, \overline{\chi}_l \chi_{N/l}) = p^{\frac{nk}{2} - \frac{n(n+1)}{2}}\sum_v \overline{\chi}(\kappa(l)^{-1} \alpha_v \kappa(l)) F | \kappa(l)^{-1} \alpha_v \kappa(l)\]
where we have chosen the decomposition (\ref{eqn:Tp-rep-names-conj}) for our definition of $T(p, \overline{\chi}_l \chi_{N/l})$.  This is seen to be the same as
\[ F | T(p, \overline{\chi}_l \chi_{N/l}) = p^{\frac{nk}{2} - \frac{n(n+1)}{2}} \sum_v \overline{\chi}_l(\det(D_v)) \chi_{N/l}(\det(A_v)) F |\kappa(l)^{-1} \alpha_v \kappa(l).\]
Now $\det(A)\det(D) \equiv \det(\alpha) \equiv p^n \bmod N$, so $\overline{\chi}_l(\det(D_v)) = \overline{\chi}_l(p^n) \chi_l(\det(A))$, hence
\begin{equation}\label{bottom-path-of-cd} F | T(p, \overline{\chi}_l \chi_{N/l}) = \overline{\chi}_l(p^n) p^{\frac{nk}{2} - \frac{n(n+1)}{2}} \sum_v \chi(\det(A_v)) F |\kappa(l)^{-1} \alpha_v \kappa(l)\end{equation}
Comparing (\ref{top-path-of-cd}) and (\ref{bottom-path-of-cd}) we see that if we multiply the latter by $\chi_l(p^{n})$ then we obtain the former; whence we obtain the stated commutative diagram.  \end{proof}

\begin{proof}[Proof of Theorem \ref{thm:any-cusp-relation}]  This follows immediately from Theorem \ref{thm:first-relation} and Lemma \ref{commutative-diagrams-lemma}.\end{proof}

\section{Action of Hecke operators on Klingen--Eisenstein series}\label{action-of-hecke-on-eis}

We now define Klingen--Eisenstein series, which is most conveniently done iteratively: rather going directly from an $r$-cusp all the way to $\Gamma_0^{(n)}(N) \backslash \mathfrak{H}_n^*$, we proceed via a sequence of $r$-cusps.  As usual, let $N$ be a squarefree positive integer and $\chi$ a Dirichlet character modulo $N$.  Let $l_1 \mid N$ represent an $(n-1)$-cusp, and set $l_0 = N/l_1$.  We take $F \in \mathcal{M}_k^{(n-1)}(N, \overline{\chi_{l_1}} \chi_{l_0})$.  Fix the representative $\kappa^{(n)}(l_1)$ from \S\ref{sctn:intertwining-relations-any-cusp} for $l_1$.  We define
\[E^{(n)}_{l_1}(Z; F) = \mu_n(\kappa(l_1))^{-nk/2} \sum_{M} \overline{\chi}(\kappa(l_1) M) j(M, Z)^{-k} F(\pi (M\langle Z \rangle)),\]
where $M$ varies over a system of representatives of 
\[(\kappa(l_1)^{-1} \Gamma_0^{(n)}(N) \kappa(l_1) \cap P_{n, n-1}(\Q)) \backslash \kappa(l_1)^{-1} \Gamma^{(n)}_0(N).\]  
Since $\kappa(l_1)^{-1} \Gamma_0^{(n)}(N) \kappa(l_1) = \Gamma_0^{(n)}(N)$ we can simply say that $M$ varies over a system of representatives of 
\[\Gamma_0^{(n)}(N) \cap P_{n, n-1}(\Q) \backslash \kappa(l_1)^{-1}\Gamma_0^{(n)}(N).\]
One easily checks that $E_{l_1}(\cdot; F)$ is well-defined provided that
\[\overline{\chi}(\kappa(l_1) \delta \kappa(l_1)^{-1}) D_{22}^{-k} = 1\text{, for all } \delta \in \kappa(l_1)^{-1} \Gamma_0^{(n)}(N) \kappa(l_1) \cap P_{n, n-1}(\Q),\]
which is equivalent to
\[\chi(-1) = (-1)^k,\]
an assumption which we have tacitly presumed throughout in light of Remark \ref{rmk:character-parity}.  The series defining $E^{(n)}_{l_1}(F) := E^{(n)}_{l_1}(\cdot; F)$ converges absolutely provided that $k > 2n$, so under this assumption we have $E_{l_1}(F) \in \mathcal{M}_k^{(n)}(N, \chi)$.  For $l_1 \mid N$ and $F \in \mathcal{M}_k^{(n-1)}(N, \overline{\chi_{l_1}} \chi_{l_0})$ we have
\begin{equation}\label{eqn:level-N-eis-section-to-phi}\Phi_{l_1}(E_{l_1}(\cdot; F)) = F,\end{equation}
as one easily sees with an application of dominated convergence (using the assumption $k>2n$).\\ 

Note that the form $F \in \mathcal{M}_k^{(n-1)}(N, \overline{\chi}_{l_1} \chi_{l_0})$ we lift need not be an a cusp form, but if it is then we easily prove the following:

\begin{lemma}\label{lem:klingen-drop-one-degree}  Let $N \in \Z_{\geq 1}$ be squarefree, $l_1 \mid N$ represent an $(n-1)$-cusp of $\Gamma_0^{(n)} \backslash \mathfrak{H}_n^*$, and set $l_0 = N/l_1$.  Let $F \in \mathcal{S}_k^{(n-1)}(N, \overline{\chi_{l_1}}\chi_{l_0})$ be an eigenfunction of $T^{(n-1)}(p, \overline{\chi_{l_1}}\chi_{l_0})$ with eigenvalue $\lambda^{(n-1)}(p, \overline{\chi_{l_1}}\chi_{l_0})$.  Then 
\[E_{l_1}(\cdot; F) | T^{(n)}(p, \chi) = \lambda^{(n)}(p, \chi)E_{l_1}(\cdot; F),\] 
where
\[\lambda^{(n)}(p, \chi) = \left(\chi_{l_1}(p^n) + {\chi}_{l_1}(p^{n-1})\chi_{l_0}(p)p^{k-n}\right)\lambda^{(n-1)}(p, \overline{\chi_{l_1}}\chi_{l_0}).\]\end{lemma}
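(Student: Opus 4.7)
The plan is to exploit the fact that Klingen--Eisenstein series built from cuspidal eigenforms remain Hecke eigenforms at primes $p \nmid N$, and then pin down the eigenvalue by applying the Siegel lowering operator $\Phi_{l_1}$ and invoking Theorem~\ref{thm:any-cusp-relation}.

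First I would establish that $E_{l_1}(\cdot; F)$ is itself an eigenfunction of $T^{(n)}(p,\chi)$ with some eigenvalue $\mu$ yet to be determined. This is the standard argument alluded to in the introduction: at primes $p \nmid N$ the Hecke operator $T^{(n)}(p,\chi)$ is normal with respect to the Petersson inner product \eqref{eqn:petersson-def}, and the Eisenstein subspace lifted from the $l_1$-cusp is preserved by $T^{(n)}(p,\chi)$ (this preservation is implicit in the content of Theorem~\ref{thm:any-cusp-relation}, which shows that the image under $\Phi_{l_1}$ of $E_{l_1}(F)|T^{(n)}(p,\chi)$ is again cuspidal of the same character on the same cusp, so that $E_{l_1}(F)|T^{(n)}(p,\chi) - cE_{l_1}(F)$ lies in a complementary invariant subspace for a suitable scalar $c$). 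Thus $E_{l_1}(\cdot; F)$ is an eigenfunction.

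To compute the eigenvalue, I would apply $\Phi_{l_1}$ to the eigenfunction equation
\[E_{l_1}(\cdot; F) | T^{(n)}(p,\chi) = \mu \, E_{l_1}(\cdot; F).\]
Using \eqref{eqn:level-N-eis-section-to-phi}, the right-hand side becomes $\mu F$. For the left-hand side, Theorem~\ref{thm:any-cusp-relation} (in its $T^{(n)}(p,\chi)$-form) gives
\[\Phi_{l_1}(E_{l_1}(F) | T^{(n)}(p,\chi)) = \chi_{l_1}(p^n)\, c^{(n-1)}(\overline{\chi_{l_1}}\chi_{l_0})\, \Phi_{l_1}(E_{l_1}(F)) | T^{(n-1)}(p, \overline{\chi_{l_1}}\chi_{l_0}),\]
which by \eqref{eqn:level-N-eis-section-to-phi} and the assumption that $F$ is an eigenform with eigenvalue $\lambda^{(n-1)}(p,\overline{\chi_{l_1}}\chi_{l_0})$ becomes
\[\chi_{l_1}(p^n)\bigl(1 + \overline{\chi_{l_1}}(p)\chi_{l_0}(p) p^{k-n}\bigr)\lambda^{(n-1)}(p,\overline{\chi_{l_1}}\chi_{l_0})\, F.\]
Since $F \neq 0$, equating coefficients of $F$ and simplifying $\chi_{l_1}(p^n)\overline{\chi_{l_1}}(p) = \chi_{l_1}(p^{n-1})$ gives the claimed formula for $\mu = \lambda^{(n)}(p,\chi)$.

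The main obstacle is really the preliminary step of showing $E_{l_1}(\cdot; F)$ is an eigenform; the eigenvalue computation itself is an immediate consequence of Theorem~\ref{thm:any-cusp-relation} combined with the sectional identity \eqref{eqn:level-N-eis-section-to-phi}. One subtlety worth stating carefully in the write-up is that our Klingen lift depends on the choice of representative $\kappa^{(n)}(l_1)$ (see Remark~\ref{rmk:similtude-lowering-not-well-defined}), and it is this consistent choice that makes $\Phi_{l_1} \circ E_{l_1} = \mathrm{id}$ hold on the nose, so that the eigenvalue can be read off directly rather than up to an ambiguous scalar.
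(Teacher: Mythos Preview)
Your overall strategy matches the paper's, but your first step---establishing \emph{a priori} that $E_{l_1}(\cdot;F)$ is an eigenfunction---is not adequately justified as written, and the paper avoids this issue by organising the argument differently.

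The parenthetical you give only checks that $\Phi_{l_1}\bigl(E_{l_1}(F)|T^{(n)}(p,\chi) - cE_{l_1}(F)\bigr)=0$ for the correct scalar $c$. This does \emph{not} place the difference in any ``complementary invariant subspace'' in a way that forces it to vanish: normality of $T^{(n)}(p,\chi)$ tells you the Eisenstein subspace is preserved, but not that the one-dimensional span $\C\,E_{l_1}(F)$ is. What is missing is the verification at the \emph{other} $(n-1)$-cusps $l_1'\neq l_1$: there one needs $\Phi_{l_1'}(E_{l_1}(F))=0$, which holds because $F$ is cuspidal (this is where the hypothesis $F\in\mathcal{S}_k^{(n-1)}$ enters). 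With that in hand, Theorem~\ref{thm:any-cusp-relation} at each $l_1'$ gives $\Phi_{l_1'}\bigl(E_{l_1}(F)|T^{(n)}(p,\chi)\bigr)=0$ as well, so the difference vanishes at \emph{every} $(n-1)$-cusp and is therefore a cusp form; being also an Eisenstein series, it is zero.

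This is exactly what the paper does: it never claims eigenfunction-ness in advance but instead writes down the difference $E_{l_1}(F)|T^{(n)}(p,\chi)-\lambda^{(n)}(p,\chi)E_{l_1}(F)$, checks it is Eisenstein (since both terms are), checks via Theorem~\ref{thm:any-cusp-relation} that it vanishes at every $(n-1)$-cusp (treating $l_1'=l_1$ and $l_1'\neq l_1$ separately), and concludes. Your eigenvalue computation in the second paragraph is fine and identical to the paper's $l_1'=l_1$ case; you just need to fold in the $l_1'\neq l_1$ vanishing rather than invoke an unproved invariance statement.
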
  
\begin{proof}  First note that the Eisenstein subspace is invariant under the action of Hecke operators outside the primes dividing the level.  This is very well-known, and is easily be proved using the (obvious) fact that the Hecke operators preserve the subspace of cusp forms, and the fact that Hecke operators at $p \nmid N$ on $\mathcal{M}_k^{(n)}(N, \chi)$ are normal with respect to the Petersson inner product (\cite{Andrianov2009} Lemma 4.6).  Thus $E_{l_1}(\cdot; f)|T^{(n)}(p, \chi)$ is an Eisenstein series.  Let $l_1'$ be any divisor of $N$, and set $l_0' = N/l_1'$.  Then
\[\begin{aligned} &\Phi_{l_1'} (E_{l_1}(\cdot; F) | T^{(n)}(p, \chi)) \\
&\qquad = \left(\chi_{l_1'}(p^n) + {\chi}_{l_1'}(p^{n-1})\chi_{l_0'}(p)p^{k-n}\right) \Phi_{l_1'} (E_{l_1}(\cdot; F)) | T^{(n-1)}(p, \overline{\chi_{l_1'}}\chi_{l_0'}).\end{aligned}\]
If $l_1'=l_1$ this becomes
\begin{equation}\label{eqn:value-of-klingen-Tp-at-d}\begin{aligned} &\Phi_{l_1}(E_{l_1}(\cdot; F)|T^{(n)}(p, \chi)) \\
&\qquad = \left(\chi_{l_1}(p^n) + {\chi}_{l_1}(p^{n-1})\chi_{l_0}(p)p^{k-n}\right)\lambda^{(n-1)}(p, \overline{\chi_{l_1}} \chi_{l_0})\Phi_{l_1}(E_{l_1}(\cdot; F)).\end{aligned}\end{equation}
If $l_1' \neq l_1$ we instead get
\begin{equation}\label{eqn:value-of-klingen-Tp-at-d'}\Phi_{l_1'}(E_{l_1}(\cdot; F)|T^{(n)}(p, \chi)) = 0.\end{equation}
Now consider the function
\[E_{l_1}(\cdot; F)|T^{(n)}(p, \chi) - (\chi_{l_1}(p^n) + {\chi}_{l_1}(p^{n-1})\chi_{l_0}(p)p^{k-n})E_{l_1}(\cdot; F) \in \mathcal{M}_k^{(n)}(N, \chi).\]
By (\ref{eqn:value-of-klingen-Tp-at-d}) and (\ref{eqn:value-of-klingen-Tp-at-d'}) this vanishes at all $(n-1)$-cusps, so is a cusp form.  On the other hand, by the discussion at the beginning of the proof it is an Eisenstein series.  Thus it must be equal to zero.\end{proof}

\noindent The same argument also proves the following:

\begin{lemma}\label{lem:klingen-drop-one-degree-j}  Let $F \in \mathcal{S}_k^{(n-1)}(N, \overline{\chi_{l_1}} \chi_{l_0})$ be an eigenfunction all of $T_j^{(n-1)}(p^2, \overline{\chi_{l_1}} \chi_{l_0})$, $0 \leq j \leq n-1$, with eigenvalues $\lambda_j^{(n-1)}(p^2, \overline{\chi_{l_1}} \chi_{l_0})$.  Then
\[E_{l_1}(\cdot; F) | T_j^{(n)}(p^2, \chi) = \lambda^{(n)}(p^2, \chi) E_{l_1}(\cdot; F),\]
where
\[\begin{aligned} \lambda^{(n)}(p^2, \chi) &= \chi_{l_1}(p^{2n})\left[c_{j, j}^{(n-1)}(\overline{\chi_{l_1}} \chi_{l_0})\lambda^{(n-1)}_j(p^2, \overline{\chi_{l_1}}\chi_{l_0}) \right.\\
&\left.\qquad+ c_{j, j-1}^{(n-1)}(\overline{\chi_{l_1}}\chi_{l_0})\lambda^{(n-1)}_{j-1}(p^2, \overline{\chi_{l_1}} \chi_{l_0}) \right.\\
&\left.\qquad+ c_{j, j-2}^{(n-1)}(\overline{\chi_{l_1}}\chi_{l_0})\lambda_{j-2}^{(n-1)}(p^2, \overline{\chi_{l_1}} \chi_{l_0})\right],\end{aligned}\]
where $c_{j, j}^{(n-1)}, c_{j, j-1}^{(n-1)}, c_{j, j-2}^{(n-2)}$ are as in Theorem \ref{thm:first-relation}.\end{lemma}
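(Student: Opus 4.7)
The plan is to mimic the argument used for Lemma \ref{lem:klingen-drop-one-degree}, substituting Theorem \ref{thm:any-cusp-relation} applied to the operators $T_j^{(n)}(p^2,\chi)$ in place of the one for $T^{(n)}(p,\chi)$. The structural ingredients are identical: (i) Hecke operators at $p \nmid N$ preserve the Eisenstein subspace (they are normal with respect to the Petersson inner product, cf.\ \cite{Andrianov2009} Lemma 4.6), so $E_{l_1}(\cdot;F)\,|\,T_j^{(n)}(p^2,\chi)$ is an Eisenstein series; (ii) $\Phi_{l_1}(E_{l_1}(\cdot;F)) = F$ by \eqref{eqn:level-N-eis-section-to-phi}, while for every other $(n-1)$-cusp $l_1' \ne l_1$ one has $\Phi_{l_1'}(E_{l_1}(\cdot;F)) = 0$ (this vanishing was used implicitly in the previous lemma and is standard for the Klingen section attached to a cusp form).

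First, I would evaluate $\Phi_{l_1'}(E_{l_1}(\cdot;F)\,|\,T_j^{(n)}(p^2,\chi))$ at every divisor $l_1'$ of $N$ using Theorem \ref{thm:any-cusp-relation}. For $l_1' = l_1$ the right-hand side of the intertwining relation becomes
\[
\chi_{l_1}(p^{2n})\!\left[c_{j,j}^{(n-1)}(\overline{\chi_{l_1}}\chi_{l_0})\,F\,|\,T_j^{(n-1)} + c_{j,j-1}^{(n-1)}(\overline{\chi_{l_1}}\chi_{l_0})\,F\,|\,T_{j-1}^{(n-1)} + c_{j,j-2}^{(n-1)}(\overline{\chi_{l_1}}\chi_{l_0})\,F\,|\,T_{j-2}^{(n-1)}\right],
\]
where all Hecke operators carry character $\overline{\chi_{l_1}}\chi_{l_0}$. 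Applying the eigenvalue hypothesis (and the convention that $\lambda_{-1}^{(n-1)} = \lambda_{-2}^{(n-1)} = 0$, consistent with Theorem \ref{thm:first-relation}) collapses this to $\lambda^{(n)}(p^2,\chi)\,F$. For $l_1' \ne l_1$ the same formula yields $0$ since $\Phi_{l_1'}(E_{l_1}(\cdot;F)) = 0$.

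Finally, consider the form
\[
G \;:=\; E_{l_1}(\cdot;F)\,|\,T_j^{(n)}(p^2,\chi) \;-\; \lambda^{(n)}(p^2,\chi)\,E_{l_1}(\cdot;F) \;\in\; \mathcal{M}_k^{(n)}(N,\chi).
\]
By the previous paragraph together with \eqref{eqn:level-N-eis-section-to-phi}, $\Phi_{l_1'}(G) = 0$ for every $(n-1)$-cusp $l_1'$; since cuspidality may be tested by vanishing at a system of representatives of $\Gamma_0^{(n)}(N)\backslash\Sp_{2n}(\Q)/P_{n,n-1}(\Q)$ (\S\ref{sctn:preliminaries}), $G$ is a cusp form. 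However, by (i) above $G$ also lies in the Eisenstein subspace, so $G = 0$ and the claim follows.

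I do not anticipate a serious obstacle: the only substantive point beyond routine bookkeeping is the simultaneous appearance of $T_j,\,T_{j-1},\,T_{j-2}$ on the $(n-1)$-side, but this is absorbed cleanly by the hypothesis that $F$ is an eigenfunction for all $T_j^{(n-1)}(p^2,\overline{\chi_{l_1}}\chi_{l_0})$ with $0 \le j \le n-1$ and the zero-operator convention for out-of-range indices.
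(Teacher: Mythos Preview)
Your proposal is correct and follows exactly the approach the paper intends: the paper's entire proof of this lemma is the single sentence ``The same argument also proves the following,'' referring back to the proof of Lemma~\ref{lem:klingen-drop-one-degree}, which is precisely what you have written out in detail using the $T_j^{(n)}(p^2,\chi)$ case of Theorem~\ref{thm:any-cusp-relation}.
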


With a little more book-keeping we can generalise Lemmas \ref{lem:klingen-drop-one-degree} and \ref{lem:klingen-drop-one-degree-j} to all Klingen--Eisenstein series.  Write $\mathcal{M}_k^{(n, n)}(N, \chi) = \mathcal{S}_k^{(n)}(N, \chi) \subset \mathcal{M}_k^{(n)}(N, \chi)$ for the subspace of cusp forms.  It makes sense to define the orthogonal complement $\mathcal{N}_k^{(n)}(N, \chi)$ of $\mathcal{M}_k^{(n, n)}(N, \chi)$ with respect to the Petersson inner product \eqref{eqn:petersson-def}.  Define an operator
\[\widetilde{\Phi} : \mathcal{N}_k^{(n)}(N, \chi) \to \oplus_{l_1 \mid N} \mathcal{M}_k^{(n-1)}(N, \overline{\chi}_{l_1} \chi_{l_0})\]
by
\[F \mapsto (\Phi_{l_1}(F))_{l_1 \mid N}.\]
This map is not surjective, since the vectors in the image must agree on lower dimensional intersections.  However, in large enough weights, $\widetilde{\Phi}$ surjects on to the subspace cut out by this condition, as we shall see in a moment.  First, we define some subspaces $\mathcal{M}_k^{(n, i)}(N, \chi) \subset \mathcal{N}_k^{(n)}(N, \chi)$ for $0 \leq i < n$ by induction on $n$.  There is nothing to do for $n=1$: for any character $\psi$ modulo $N$, $\mathcal{M}_k^{(1, 0)}(N, \psi) = \mathcal{N}_k^{(1)}(N, \psi)$ is the usual space of degree one Eisenstein series.  For $n>1$ and $0 \leq i < n$, we define 
\[\mathcal{M}_k^{(n, i)}(N, \chi) = \widetilde{\Phi}^{-1} \left(\bigoplus_{l_1 \mid N} \mathcal{M}_k^{(n-1, i)}(N, \overline{\chi}_{l_1} \chi_{l_0})\right).\]
Then $\mathcal{M}_k^{(n, i)}(N, \chi)$ is a linear subspace, and $\Sigma_{i=0}^n \mathcal{M}_k^{(n, i)}(N, \chi)$ is in fact direct.  By double induction (increasing on $n$, decreasing on $i$) one sees from the normality of the Hecke operators with respect to the inner product (\ref{eqn:petersson-def}) and Theorem \ref{thm:any-cusp-relation} that the Hecke algebra $\mathcal{H}_p^{(n)}$ (when $p \nmid N$) preserves the decomposition $\bigoplus_i \mathcal{M}_k^{(n, i)}(N, \chi)$.\\

In order to produce some elements of $\mathcal{M}_k^{(n, i)}(N, \chi)$ we will use Eisenstein series.  At the same time this will show $\widetilde{\Phi}$ is surjective, i.e.
\begin{equation}\label{eqn:stratification-of-mkn}\mathcal{M}_k^{(n)}(N, \chi) = \bigoplus_{i=0}^n \mathcal{M}_k^{(n, i)}(N, \chi).\end{equation}
We work iteratively: let $(l_{n-r},...,l_1)$ be a sequence of pairwise coprime divisors of $N$ corresponding to an $r$-cusp, and define
\[\Phi_{(l_{n-r},...,l_1)} = \Phi_{l_1} \circ \Phi_{l_2} \cdots \circ \Phi_{l_{n-r}}.\]
In the other direction, let $F \in \mathcal{S}_k^{(r)}(N, \overline{\chi}_{l_{n-r} \hdots l_1} \chi_{l_0})$, and define
\[E_{(l_{n-r},...,l_1)}(F) = E_{l_{n-r}} \circ E_{l_{n-r-1}} \circ \cdots \circ E_{l_1}(F).\]
The proof of (\ref{eqn:stratification-of-mkn}) follows easily by induction once we know that $E_{(l_{n-r},...,l_1)}(F) \in \mathcal{M}_k^{(n, n-r)}$.  This latter fact follows from a somewhat technical computation for which we refer to \cite{Satake1957}, especially (2.12) and the discussion preceding it.  See also \cite{Harris1984} Corollary 2.4.6, which includes a detailed proof of this decomposition but is slightly removed from our context since modular forms are identified with sections of automorphic vector bundles.\\

We can now iterate the idea of Lemmas \ref{lem:klingen-drop-one-degree} and \ref{lem:klingen-drop-one-degree-j} to handle lifts of cusp forms of any degree $0 \leq r < n$:

\begin{theorem}\label{thm:action-of-hecke-on-klingen}  Let $n$ be a positive integer, $0 \leq r < n$, $N$ be a squarefree positive integer, and $(l_{n-r},...,l_1)$ correspond to an $r$-cusp of $\Gamma_0^{(n)}(N) \backslash \mathfrak{H}_n^*$.  Let $F \in \mathcal{S}_k^{(r)}(N, \overline{\chi}_{l_{n-r}\hdots l_1}\chi_{l_0})$, where $k>n+r+1$.
\begin{enumerate}
\item  Assume that $F$ is an eigenfunction of $T^{(r)}(p, \overline{\chi}_{l_{n-r}\hdots l_1}\chi_{l_0})$ with eigenvalue $\lambda^{(r)}(p, \overline{\chi}_{l_{n-r}\hdots l_1}\chi_{l_0})$.  Then 
\[E_{(l_{n-r},...,l_1)}(F) | T^{(n)}(p, \chi) = \lambda^{(n)}(p, \chi)E_{(l_{n-r},...,l_1)}(F),\] 
where
\[\lambda^{(n)}(p, \chi) = \lambda^{(r)}(p, \overline{\chi}_{l_{n-r}\hdots l_1} \chi_{l_0}) \prod_{t=r+1}^n \chi_{l_{t-r}}(p^t) c^{(t)}(\overline{\chi}_{l_{n-r} \hdots l_{t-r}} \chi_{l_{t-r-1} \hdots l_0}).\]
\item  Assume that $F$ is an eigenfunction of each $T_j^{(r)}(p^2, \overline{\chi}_{l_{n-r} \hdots l_1} \chi_{l_0})$ with eigenvalues $\lambda_j^{(r)}(p^2, \overline{\chi}_{l_{n-r} \hdots l_1} \chi_{l_0})$.  Then 
\[E_{(l_{n-r},...,l_1)}(F) | T_j^{(n)}(p^2, \chi) = \lambda_j^{(n)}(p^2, \chi)E_{(l_{n-r},...,l_1)}(F),\] 
where $\lambda_j^{(n)}(p^2, \chi)$ is given by the following recursive procedure: Define
\[\begin{aligned}\theta(m, i, \psi, l_t) &= \psi_{l_{t}}(p^{2m}) \left[c_{i, i}^{(m-1)}(\overline{\psi}_{l_{t}}\psi_{N/l_t}) \lambda_i^{(m-1)}(p^2, \overline{\psi}_{l_{t}} \psi_{N/l_{t}}) \right.\\
&\left.\qquad\qquad + c_{i, i-1}^{(m-1)}(\overline{\psi}_{l_{t}}\psi_{N/l_t}) \lambda_{i-1}^{(m-1)}(p^2, \overline{\psi}_{l_{t}} \psi_{N/l_{t}}) \right.\\
&\left.\qquad\qquad + c_{i, i-2}^{(m-1)}(\overline{\psi}_{l_{t}}\psi_{N/l_t}) \lambda_{i-2}^{(m-1)}(p^2, \overline{\psi}_{l_{t}} \psi_{N/l_{t}})\right],\end{aligned}\]
where $c_{i, i}^{(m-1)}(\cdot)$, $c_{i, i-1}^{(m-1)}(\cdot)$, and $c_{i, i-2}^{(m-1)}(\cdot)$ are given by Theorem \ref{thm:first-relation}, we have the convention that $c^{(s)}_{j, k} = 0$ if $k < 0$ or $k > s$, and the quantities $\lambda$ on the right hand side are currently treated as formal variables.  Then the eigenvalue can be computed by applying the above formula
\[\lambda^{(n)}_j(p^2, \chi) = \theta(n, j, \chi, l_{n-r}).\]
If $r=n-1$ then we substitute in the eigenvalues $\lambda_i^{(n-1)}(p^2, \overline{\chi}_{l_{n-r}} \chi_{l_{n-r-1} \cdots l_0})$ (for $i-2 \leq j \leq i$) of the underlying cusp form and terminate; otherwise we compute these quantities by again applying the formula
\[\lambda_i^{(n-1)}(p^2, \overline{\chi}_{l_{n-r}} \chi_{l_{n-r-1} \cdots l_0}) = \theta(n-1, i, \overline{\chi}_{l_{n-r}} \chi_{l_{n-r-1} \cdots l_0}, l_{n-r-1}).\]
This procedure terminates once we have applied the formula $n-r$ times, and gives an expression for $\lambda_j^{(n)}(p^2, \chi)$ in terms of the eigenvalues of $F$.
\end{enumerate} 
\end{theorem}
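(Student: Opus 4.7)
The plan is induction on $n - r$, with the base case $n - r = 1$ handled directly by Lemmas \ref{lem:klingen-drop-one-degree} and \ref{lem:klingen-drop-one-degree-j}. For the inductive step, write $E := E_{(l_{n-r},\ldots,l_1)}(F) = E_{l_{n-r}}(G)$ where $G := E_{(l_{n-r-1},\ldots,l_1)}(F)$ is itself a degree-$(n-1)$ iterated Klingen--Eisenstein series, attached to the $r$-cusp $(l_{n-r-1},\ldots,l_1)$ of $\Gamma_0^{(n-1)}(N) \backslash \mathfrak{H}_{n-1}^*$ and living in $\mathcal{M}_k^{(n-1)}(N, \overline{\chi}_{l_{n-r}}\chi_{l_{n-r-1}\cdots l_0})$. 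By the inductive hypothesis $G$ is a simultaneous eigenform of $T^{(n-1)}(p, \overline{\chi}_{l_{n-r}}\chi_{l_{n-r-1}\cdots l_0})$ and each $T_j^{(n-1)}(p^2, \overline{\chi}_{l_{n-r}}\chi_{l_{n-r-1}\cdots l_0})$ with eigenvalues given by the recursive formula, shortened by one step.

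The inductive step then generalises the argument of Lemma \ref{lem:klingen-drop-one-degree}: let $\lambda^{(n)}(p, \chi)$ be the candidate eigenvalue from the statement (so that, by the recursion, $\lambda^{(n)}(p, \chi) = \chi_{l_{n-r}}(p^n) c^{(n-1)}(\overline{\chi}_{l_{n-r}}\chi_{l_{n-r-1}\cdots l_0}) \lambda^{(n-1)}$), and set $H := E | T^{(n)}(p, \chi) - \lambda^{(n)}(p, \chi) E$. Since the Hecke operators at $p \nmid N$ are normal with respect to the Petersson product and satisfy the intertwining relations of Theorem \ref{thm:any-cusp-relation}, each stratum $\mathcal{M}_k^{(n, i)}$ is Hecke-stable (as discussed around \eqref{eqn:stratification-of-mkn}); hence $H \in \mathcal{M}_k^{(n, r)}$. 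Because $r < n$, $\mathcal{M}_k^{(n, r)} \subset \mathcal{N}_k^{(n)}$ is orthogonal to the cusp forms, so it suffices to show $H \in \mathcal{S}_k^{(n)}$, i.e., that $\widetilde{\Phi}(H) = 0$. I would compute $\Phi_{l'}(H)$ for each $l' \mid N$: for $l' = l_{n-r}$, Theorem \ref{thm:any-cusp-relation} combined with \eqref{eqn:level-N-eis-section-to-phi} (which gives $\Phi_{l_{n-r}}(E) = G$) and the inductive eigenvalue for $G$ yields $\Phi_{l_{n-r}}(E | T^{(n)}(p, \chi)) = \chi_{l_{n-r}}(p^n) c^{(n-1)}(\overline{\chi}_{l_{n-r}}\chi_{l_{n-r-1}\cdots l_0}) \lambda^{(n-1)} G$, which cancels $\lambda^{(n)} G$ exactly; for $l' \neq l_{n-r}$, Theorem \ref{thm:any-cusp-relation} reduces $\Phi_{l'}(H)$ to a scalar multiple of $\Phi_{l'}(E)$. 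Part (ii) is then obtained by the identical argument with Lemma \ref{lem:klingen-drop-one-degree-j} and the $T_j$-clause of Theorem \ref{thm:any-cusp-relation}; the quantity $\theta(m, i, \psi, l_t)$ in the statement is simply bookkeeping the three-term output produced at each application of that clause.

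The principal obstacle is the cross-cusp vanishing claim $\Phi_{l'}(E_{(l_{n-r},\ldots,l_1)}(F)) = 0$ for $l' \neq l_{n-r}$. When $n - r = 1$ this is the classical vanishing of a Klingen--Eisenstein series from a cusp form at cusps other than the one it is lifted from. For $n - r > 1$ the intermediate lifts are no longer cuspidal, so I would establish the claim by a parallel induction on $n - r$ running alongside the main one: given the inductive vanishing of $\Phi_{l''}(G)$ at $(n-2)$-cusps $l'' \neq l_{n-r-1}$ of $\Gamma_0^{(n-1)}(N) \backslash \mathfrak{H}_{n-1}^*$ and the fact that the innermost input $F$ is cuspidal, one unfolds the defining series for $E_{l_{n-r}}(G)$ using coset representatives for $\Gamma_0^{(n)}(N) \cap P_{n, n-1}(\Q) \backslash \kappa(l_{n-r})^{-1}\Gamma_0^{(n)}(N)$ and evaluates the $\Phi_{l'}$-limit term by term, with the combined vanishing forcing each surviving term to be zero. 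This is essentially the computation referenced to \cite{Satake1957} (around (2.12)) in the discussion preceding the theorem, and could equivalently be subsumed into the proof of the decomposition \eqref{eqn:stratification-of-mkn}.
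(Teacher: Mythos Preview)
Your approach is essentially the paper's: induction on $n-r$ with base case Lemmas \ref{lem:klingen-drop-one-degree} and \ref{lem:klingen-drop-one-degree-j}, Hecke-stability of the strata $\mathcal{M}_k^{(n,i)}$, and showing the difference $H = E|T - \lambda E$ vanishes by checking its boundary restrictions. The only distinction is \emph{where} the restrictions are checked. The paper goes all the way down to the $r$-cusps, using that an element of $\mathcal{M}_k^{(n,r)}$ is determined by its iterated restrictions $\Phi_{(l_{n-r}',\ldots,l_1')}$; the needed cross-cusp vanishing $\Phi_{(l_{n-r}',\ldots,l_1')}(E)=0$ for $(l_{n-r}',\ldots,l_1')\neq(l_{n-r},\ldots,l_1)$ is precisely the Satake statement already invoked before the theorem. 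You instead stop at the $(n-1)$-cusps and conclude via ``$H$ cuspidal and Eisenstein implies $H=0$''.

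Your route therefore needs the one-step vanishing $\Phi_{l'}(E_{(l_{n-r},\ldots,l_1)}(F))=0$ for $l'\neq l_{n-r}$, which is genuinely stronger than the cuspidal case since $G=E_{(l_{n-r-1},\ldots,l_1)}(F)$ is not a cusp form. It does hold, but rather than a fresh unfolding you can deduce it from what is already in place: $\Phi_{l'}(E)\in\mathcal{M}_k^{(n-1,r)}$, and every $r$-cusp restriction of it equals $\Phi_{(l',m_{n-1-r},\ldots,m_1)}(E)$, which vanishes by the iterated Satake statement since $l'\neq l_{n-r}$; injectivity of the iterated $\widetilde{\Phi}$ on $\mathcal{M}_k^{(n-1,r)}$ then forces $\Phi_{l'}(E)=0$. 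One small wording correction: for $l'\neq l_{n-r}$, Theorem \ref{thm:any-cusp-relation} does not reduce $\Phi_{l'}(H)$ to a \emph{scalar} multiple of $\Phi_{l'}(E)$ but to a Hecke operator applied to $\Phi_{l'}(E)$ minus a scalar multiple of $\Phi_{l'}(E)$ --- a distinction that of course disappears once $\Phi_{l'}(E)=0$.
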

\begin{proof}  We prove Part 1 by induction on $n$, the proof of Part 2 follows by the same argument.  When $n=r+1$ this is Lemma \ref{lem:klingen-drop-one-degree}, so the base case is done.  In general, consider the function
\begin{equation}\label{eqn:in-right-stratum}E_{(l_{n-r},...,l_1)}(F)| T^{(n)}(p, \chi) - \lambda^{(n)}(p, \chi) E_{(l_{n-r},...,l_1)}(F) \in \mathcal{M}_k^{(n, n-r)}(N, \chi).\end{equation}
Then
\[\begin{aligned} &\Phi_{l_{n-r}}(E_{(l_{n-r},...,l_1)}(F)| T^{(n)}(p, \chi) - \lambda^{(n)}(p, \chi) E_{(l_{n-r},...,l_1)}(F)) \\
&\qquad = \chi_{l_{n-r}}(p^n)c^{(n)}(\overline{\chi}_{l_{n-r}}\chi_{l_{n-r-1}\hdots l_0}) T^{(n-1)}(p, \overline{\chi}_{l_{n-r}}\chi_{l_{n-r-1} \hdots l_1}) E_{(l_{n-r-1},...,l_1)}(F) \\
&\qquad\qquad - \lambda^{(n)}(p, \chi) E_{(l_{n-r-1},...,l_1)}(F) \\
&\qquad = \chi_{l_{n-r}}(p^n)c^{(n)}(\overline{\chi}_{l_{n-r}}\chi_{l_{n-r-1}\hdots l_0}) \left[T^{(n-1)}(p, \overline{\chi}_{l_{n-r}}\chi_{l_{n-r-1} \hdots l_1}) E_{(l_{n-r-1},...,l_1)}(F) \right.\\
&\left.\qquad\qquad - \lambda^{(n-1)}(p, \overline{\chi}_{l_{n-r}} \chi_{l_{n-r-1} \hdots l_0})E_{(l_{n-r-1},...,l_1)}\right].\end{aligned}\]
By induction hypothesis this is zero.  On the other hand it is clear that $\Phi_{(l_{n-r}',...,l_1')}(E_{(l_{n-r},...,l_1)}(F)) = 0$ if $(l_{n-r}',...,l_1') \neq (l_{n-r},...,l_1)$.  Thus
\[\Phi_{(l_{n-r}',...,l_1')}(E_{(l_{n-r},...,l_1)}(F)| T^{(n)}(p, \chi) - \lambda^{(n)}(p, \chi) E_{(l_{n-r},...,l_1)}(F)) = 0,\]
for all $(l_{n-r}',...,l_1')$.  But the containment in (\ref{eqn:in-right-stratum}) tells us that
\[E_{(l_{n-r},...,l_1)}(F)| T^{(n)}(p, \chi) - \lambda^{(n)}(p, \chi) E_{(l_{n-r},...,l_1)}(F)\]
is determined by its value on all $r$-cusps.  Since we have shown it vanishes at all of these, it must be zero, so we obtain the statement of the theorem.\end{proof}


\bibliographystyle{plain}
\bibliography{refs}

\begin{thebibliography}{10}

\bibitem{Andrianov2009}
A.~Andrianov.
\newblock {\em Introduction to {S}iegel modular forms and {D}irichlet series}.
\newblock Springer, 2009.

\bibitem{Asai1976}
T.~Asai.
\newblock On the {Fourier} coefficients of automorphic forms at various cusps
  and some applications to {Rankin's} convolution.
\newblock {\em J. Math. Soc. Japan}, 28(1):48--61, 1976.

\bibitem{Boecherer1982}
S.~B{\"{o}}cherer.
\newblock {\"{U}}ber gewisse {S}iegelsche {M}odulformen zweiten {G}rades.
\newblock {\em Math. Ann.}, 261(1):23--41, 1982.

\bibitem{BoechererIbukiyama2012}
S.~B{\"o}cherer and T.~Ibukiyama.
\newblock Surjectivity of the {Siegel} {$\Phi$} operator for square free level
  and small weight.
\newblock {\em Ann. Inst. Fourier (Grenoble)}, 62(1):121--144, 2012.

\bibitem{BoechererSchulze-Pillot1991}
S.~B{\"o}cherer and R.~Schulze-Pillot.
\newblock Siegel modular forms and theta series attached to quaternion
  algebras.
\newblock {\em Nagoya Math. J.}, 121:35--96, 1991.

\bibitem{Dickson2014b}
M.~Dickson.
\newblock Fourier coefficients of degree two {S}iegel–-{E}isenstein with
  trivial character at squarefree level.
\newblock {\em Ramanujan J.}, 37(3):541--562, 2014.

\bibitem{Freitag1983}
E.~Freitag.
\newblock {\em Siegelsche Modulfunktionen}, volume 254 of {\em Grundlehren der
  Mathematischen Wissenschaften}.
\newblock Springer-Verlag, Berlin, 1983.

\bibitem{HafnerWalling2002}
J.~Hafner and L.~Walling.
\newblock Explicit action of {Hecke} operators on {Siegel} modular forms.
\newblock {\em J. Number Theory}, 93(1):34--57, 2002.

\bibitem{Harris1984}
M.~Harris.
\newblock Eisenstein series on {S}himura varieties.
\newblock {\em Ann. of Math. (2)}, 119(1):59--94, 1984.

\bibitem{Krieg1986}
A.~Krieg.
\newblock Das {Vertauschungsgesetz} zwischen {Hecke}-{Operatoren} und dem
  {Siegelschen} {$\varphi$}-{Operator}.
\newblock {\em Arch. Math. (Basel)}, 6(4):323--329, 1986.

\bibitem{Miyake2006}
T.~Miyake.
\newblock {\em Modular forms}.
\newblock Springer Monographs in Mathematics. Springer-Verlag, Berlin, 2006.
\newblock Translated from the 1976 Japanese original by Yoshitaka Maeda.

\bibitem{PoorYuen2013}
C.~Poor and D.~Yuen.
\newblock The cusp structure of the paramodular groups for degree two.
\newblock {\em J. Korean Math. Soc.}, 50(2):445--464, 2013.

\bibitem{Satake1957}
I.~Satake.
\newblock Surjectivit{\'e} globale de l'op{\'e}rateur {$\Phi$}.
\newblock {\em S{\'e}minaire Henri Cartan}, 10(2):1--17, 1957-1958.

\bibitem{Zarkovskaja1974}
N.~A. \v{Z}arkovskaja.
\newblock The {S}iegel operator and {H}ecke operators {(Russian)}.
\newblock {\em Funkcional. Anal. i Priložen.}, 8(2):30--38, 1974.

\bibitem{Walling2014}
L.~Walling.
\newblock {H}ecke eigenvalues and relations for {S}iegel eisenstein series of
  arbitrary degree, level, and character.
\newblock Preprint, arXiv:1412.4588, 2014.

\bibitem{Walling2012}
L.~H. Walling.
\newblock Hecke eigenvalues and relations for degree {$2$} {S}iegel
  {E}isenstein series.
\newblock {\em J. Number Theory}, 132(11):2700--2723, 2012.

\end{thebibliography}

\end{document}